\numberwithin{equation}{section}
\definecolor{darkred}{rgb}{0.7,0,0} % darkred color
\newcommand{\defn}[1]{{\color{darkred}\emph{#1}}} % emphasis of a definition
\newcommand{\sgn}{\mathrm{sgn}}
\newcommand{\MP}{\mathcal{MP}}
\newcommand{\Pa}{\mathcal{P}}
\newcommand{\CC}{\mathbf C}
\newcommand{\ZZ}{\mathbf Z}
\newcommand{\spn}{\mathrm{span}}
\newcommand{\Sym}{\mathrm{Sym}}
\DeclareMathOperator{\Supp}{Supp}
\DeclareMathOperator{\End}{End}
\newcommand{\rn}{\mathrm{rank}}
\DeclareMathOperator{\Hom}{Hom}
\DeclareMathOperator{\set}{set}
\DeclareMathOperator{\Res}{Res}
\newtheorem{theorem}{Theorem}[section]
\newtheorem{lemma}[theorem]{Lemma}
\newtheorem{proposition}[theorem]{Proposition}
\newtheorem{corollary}[theorem]{Corollary}
\theoremstyle{definition}
\newtheorem{definition}[theorem]{Definition}
\newtheorem{remark}[theorem]{Remark}
\newtheorem{example}[theorem]{Example}
\title{The Multiset Partition Algebra}
\author[Narayanan]{Sridhar Narayanan}
\address{Department of Mathematics, IIT Bombay, India}\email{sridharp.narayanan@gmail.com}
\author[Paul]{Digjoy Paul}
\address{Tata Institute of Fundamental Research, Mumbai, India}\email{digjoypaul@gmail.com}
\author[Srivastava]{Shraddha Srivastava}
\address{Uppsala University, Sweden}\email{maths.shraddha@gmail.com}
\begin{document}
	
	\begin{abstract}
		We introduce the multiset partition algebra $\MP_k(\xi)$ over the polynomial ring $F[\xi]$, where $F$ is a field of characteristic $0$ and $k$ is a positive integer.  When $\xi$ is specialized to a positive integer $n$, we establish the Schur--Weyl duality between the actions of
		resulting algebra $\MP_k(n)$ and the symmetric group $S_n$ on $\Sym^k(F^n)$. The construction of $\MP_k(\xi)$ generalizes to any vector $\lambda$ of non-negative integers yielding the algebra $\MP_{\lambda}(\xi)$ over $F[\xi]$ so that there is Schur--Weyl duality between the actions of $\MP_{\lambda}(n)$ and $S_n$ on $\Sym^{\lambda}(F^n)$.
		We find the generating function for the multiplicity  of each irreducible representation of $S_n$  in $\Sym^\lambda(F^n)$, as $\lambda$ varies, in terms of a plethysm of Schur functions. As  consequences we obtain an indexing set for the irreducible representations of $\MP_k(n)$ and
		the generating function for the multiplicity of an irreducible polynomial representation of $GL_n(F)$ when restricted to $S_n$.  We show that $\MP_\lambda(\xi)$ embeds inside the partition algebra $\Pa_{|\lambda|}(\xi)$. Using this embedding,  we show that the multiset partition algebras are generically semisimple over $F$. Also, for the specialization of $\xi$ at $v$ in $F$, we prove that $\MP_{\lambda}(v)$ is a cellular algebra.
	\end{abstract}
	\subjclass[2020]{Primary: 05E10. Secondary: 05E15; 20C30.}
	\keywords{Multiset partition algebra; Schur--Weyl duality; symmetric group; cellular algebra.}
	\maketitle
	\tableofcontents
	
	\section{Introduction}
	The symmetric group $S_k$ acts on the $k$-fold tensor space $(\CC^n)^{\otimes k}$ by permuting its tensor factors. The space $\CC^n$ is the defining representation of the general linear group $GL_n(\CC)$, and $GL_n(\CC)$ acts on $(\CC^n)^{\otimes k}$ diagonally. These two actions commute and  they generate the centralizers of each other. Further, as a $(GL_n(\CC),S_k)$-bimodule 
	\begin{displaymath}
		(\CC^n)^{\otimes k}\cong\bigoplus_{\mu} W_{\mu}\otimes V_{\mu},
	\end{displaymath}   
	where $\mu$ is a partition of $k$ with at most $n$ parts, $W_{\mu}$ is an irreducible polynomial representation of $GL_n(\CC)$ and $V_{\mu}$ is a Specht module of $S_n$. 
	This phenomenon, discovered by Schur~\cite{Schur1927} and later popularized by Weyl~\cite{Weyl}, is called the classical Schur--Weyl duality and is a cornerstone of representation theory.
	
	Brauer~\cite{MR1503378} restricted the action of $GL_n(\CC)$ on $(\CC^n)^{\otimes k}$ to the orthogonal group $O_n(\CC)$ and showed Schur--Weyl duality  between the actions of Brauer algebra and $O_n(\CC)$ on $(\CC^n)^{\otimes k}$. Jones~\cite{Jones} and Martin~\cite{MR1265453}, independently, further restricted the action of $O_n(\CC)$ to the symmetric group $S_n$. This led to the definition of partition algebra $\Pa_{k}(\xi)$  over 
	the polynomial ring $F[\xi]$, where $F$ is a field of characteristic $0$. 
	The algebra $\Pa_k(\xi)$ has a basis given by the partitions of a set of cardinality $2k$. A partition of such a set can be pictorially represented as a graph (see Section \ref{pa}), called the partition diagram. In particular, $\Pa_{k}(\xi)$ is a diagram algebra as described by Martin~\cite{martin2008diagram}. Martin and Saluer~\cite{MS94} showed that $\Pa_k(\xi)$ is semisimple over $F[\xi]$. Further, 
	for the specialization of $\xi$ at $v\in F$, $\Pa_k(v)$ is semisimple over $F$ unless $v$ is an integer lying in the set $\{0,1,\ldots, 2k-2\}$. 
	
When $\xi$ is evaluated at a positive integer $n$,	Jones~\cite{Jones} showed that   $\Pa_k(n)$ maps onto the commutant of $S_n$ acting on $(F^n)^{\otimes k}$.
	Moreover, when $n\geqslant 2k$,
	\begin{equation}\label{eq:intro_1}
		\Pa_k(n)\cong \End_{S_n}((F^n)^{\otimes k}).
	\end{equation}
	In particular, this isomorphism gives a diagrammatic interpretation of the centralizer algebra $\End_{S_n}((F^n)^{\otimes k})$.
	
	Martin~\cite{MR1265453} classified the irreducible representations of $\Pa_k(\xi)$. Halverson and Ram~\cite{HR} gave a combinatorial description of these representations.  For $n\geqslant 2k$,  Benkart and Halverson~\cite{BH} constructed a basis for the irreducible representations of $\Pa_k(n)$ in terms of the set partition tableaux. Orellana and Zabrocki~\cite{OZ} studied the combinatorics of the multiset tableaux, which generalizes the set partition tableaux. The multiset tableaux play a central role in the representation theory of the algebra we define in this paper.

	Inspired by the Kazdhan--Lusztig basis of Hecke algebras, Graham and Lehrer~\cite{GL} introduced the notion of a cellular algebra in terms of the existence of a special kind of basis, called  cellular basis. An important application of cellular basis is that it allows one to construct the irreducible representations of the cellular algebra. K\"onig and Xi~\cite{KX} gave a basis-free definition of a cellular algebra. Prominent examples of cellular algebras include Ariki--Koike Hecke algebras, Brauer algebras, and also partition algebras.

	The $k$-th symmetric power $\Sym^{k}(F^n)$ is a polynomial representation of $GL_n(F)$.  In spirit of the isomorphism~\eqref{eq:intro_1}, we 
	study the centralizer algebra $\End_{S_n}(\Sym^k(F^n))$ by constructing a new diagram algebra $\MP_k(\xi)$ over $F[\xi]$ in Section~\ref{sec:mpa}. This algebra has a basis indexed by the multiset partitions of the multiset $\{1^k,1'^k\}$. Pictorially these multiset partitions can be represented by certain multigraphs (Equation~\eqref{al:bij}). The structure constants of $\MP_k(\xi)$ with respect to this basis are polynomials in $\xi$ (Equation \eqref{eq:poly_gene}).

	The Schur--Weyl duality between the actions of $\MP_k(n)$ and $S_n$ on $\Sym^{k}(F^n)$ is demonstrated in  Theorem~\ref{thm:swd_mult}.  This provides a diagrammatic interpretation of $\End_{S_n}(\Sym^k(F^n))$, thereby answering a question in~\cite[p. 22]{Nate}. In Theorem~\ref{thm:mult_k}, we conclude that $\MP_k(\xi)$ is an unital associative algebra. Following Harman \cite[p. 22]{Nate}, we call $\MP_{k}(\xi)$ the multiset partition algebra.
	
	For a vector $\lambda=(\lambda_1,\ldots, \lambda_s)$ of non-negative integers, the representation 
	$$\Sym^{\lambda}(F^n):=\Sym^{\lambda_1}(F^n)\otimes\cdots\otimes\Sym^{\lambda_s}(F^n)$$
	along with the tensor product of exterior powers of $F^n$ are the building blocks for constructing the irreducible polynomial representations of $GL_n(F)$. By the restriction $\Sym^{\lambda}(F^n)$
	is also a representation of $S_n$. For a partition $\nu$ of $n$, let $a_{\nu}^{\lambda}$ denote the multiplicity of the irreducible representation of $S_n$ corresponding to $\nu$ in $\Sym^{\lambda}(F^n)$. These multiplicities are interpreted as a count of certain multiset tableaux in~\cite{OZ} and in~\cite[Proposition 3.11]{Nate}. In Lemma~\ref{lm:gen_lambda} we obtain the ordinary generating function for $\{a^{\lambda}_{\nu}\}_{\lambda}$. As a result, we give an indexing set for the irreducible representations of $S_n$ occurring in $\Sym^{\lambda}(F^n)$ when $\lambda=(k)$ in Theorem~\ref{coro:mult}. Due to Schur--Weyl duality between $\MP_k(n)$ and $S_n$, this is also an indexing set of the irreducible representations of $\MP_k(n)$ when $n\geqslant 2k$. Another application  of this generating function yields a generating function for the restriction coefficients (see Proposition~\ref{theorem:r-gen}), i.e., the multiplicities of an irreducible representation of $S_n$ in the restriction of an irreducible polynomial representation of $GL_n(F)$ to $S_n$.

	 We construct an embedding of $\MP_k(\xi)$ in $\Pa_k(\xi)$ in Theorem~\ref{cor:new_lambda} and show that the image of
	the embedding is $e\Pa_k(\xi)e$ for an idempotent $e\in \Pa_k(\xi)$ in Lemma~\ref{lm:idem}.  Upon specialization of $\xi$ at $v\in F$,  in Theorem~\ref{thm:gensem} we obtain that  $\MP_{k}(v)$ is semisimple when $v$ is not an integer or $v$ is an integer such that $v\notin\{0,1,\ldots,2k-2\}$.  In Theorem~\ref{thm:cellular}, we prove $\MP_k(v)$ is cellular.

	In Section~\ref{app:A}, we generalize the construction of $\MP_k(\xi)$ to any vector $\lambda$ of non-negative integers. The algebra so obtained has a basis indexed by the multiset partitions of $\{1^{\lambda_1},\ldots,s^{\lambda_s}, 1'^{\lambda_1},\ldots,s'^{\lambda_s}\}$, which we pictorially represent by certain multigraphs.  
	The Schur--Weyl duality exists between the actions of $\MP_{\lambda}(n)$ and $S_n$ on $\Sym^{\lambda}(F^n)$. In particular, for $n \geqslant 2|\lambda|$,  $\MP_{\lambda}(n)$ is isomorphic to the centralizer algebra $\End_{S_n}(\Sym^{\lambda}(F^n))$. We also call $\MP_\lambda(\xi)$ the multiset partition algebra. 
	We define an embedding of $\MP_\lambda(\xi)$ inside $\Pa_{|\lambda|}(\xi)$. The image of
	the embedding is $e\Pa_{|\lambda|}(\xi)e$ for an idempotent $e\in \Pa_{|\lambda|}(\xi)$. As an application of this embedding, we prove that $\MP_\lambda(\xi)$ is semisimple over $F[\xi]$. Further, upon specialization of $\xi$ at $v\in F$,  we obtain that  $\MP_{\lambda}(v)$ over $F$ is cellular, and semisimple when $v$ is not an integer or $v$ is an integer such that $v\notin\{0,1,\ldots,2|\lambda|-2\}$. The aforementioned embedding is an isomorphism when $\lambda=(1^k)$, so that
	\begin{displaymath}
		\Pa_k(\xi)\cong \MP_{(1^k)}(\xi).
	\end{displaymath}
The proofs of the results in this section follow by minor modifications of arguments in Sections~\ref{sec:mpa} and \ref{sec:five}. We outline these modifications where required.

		Concurrently in \cite{OZM}, Orellana and Zabrocki  studied a diagrammatic algebra, also called a multiset partition algebra. This algebra is related to the centralizer algebra $\End_{S_n}(\Sym^{r}(\CC^{n}\otimes \CC^k))$. The  multiset partition algebra defined in this manuscript is closely related to their algebra. However, we do not give an explicit connection here.
	
	Throughout this paper, $F$ denotes a field of characteristic $0$.
	
	\section{Preliminaries}
	In this section, we give an overview of the multiset tableaux, centralizer algebras of permutation representations of a finite group,  and partition algebras, which are relevant in this paper.

	\subsection{Multisets and tableaux}
	In this section, we define and introduce notation for various terms related to multisets, partitions of multisets, and tableaux. Multiset tableaux occur in the work of Colmenarejo, 
	Orellana, Saliola, Schilling, and Zabrocki \cite{OZS}. Here we include a self-contained recapitulation of these concepts. 
	
	A multiset is a collection of possibly repeated objects. For example, $\{1,1,1,2,2\}$ is a multiset with three occurrences of the element $1$ and two occurrences of the element $2$. More precisely:
	\begin{definition} 
		A \defn{multiset} is an ordered pair $(S,f)$ where $S$ is a set, and $f$ is a non-negative integer-valued function on S, which we call the multiplicity function.
	\end{definition} 
	
	Given $S=\{1,2,\ldots,n\}$ and a multiplicity function $f$, we shall denote 
	\begin{displaymath}
		(S,f)=\{1^{f(1)},2^{f(2)},\ldots, n^{f(n)}\}.
	\end{displaymath}
	Here $f(i)$ indicates the number of times $i$ appears in the multiset. Thus the aforementioned multiset would be denoted $\{1^3,2^2\}$. Given a set $S$, denote the set of all multisets with elements drawn from $S$ by $\mathcal{A}_S$.

	\begin{definition}
		A \defn{multiset partition} of a given multiset $M$ is a multiset of multisets whose disjoint union equals $M$.
	\end{definition}
	
	For example, $\pi= \{\{1,2\},\{1,2\},\{2\},\{1^2\}\}$ is a multiset partition of $\{1^4,2^3\}$.
	
	A set with a total order on its elements is called an ordered set. Given an ordered set $S$, we fix the order on multisets in $\mathcal{A}_S$ to be the \defn{graded lexicographic order} $\leqslant_{L}$, which we recall below from \cite[Section 2.4]{OZS}.
	\begin{definition}\label{def:go}
		For multisets $M_1=\{a_1,\ldots, a_r\}$ with $a_1\leqslant \cdots \leqslant a_r$, and $M_2=\{b_1,\ldots, b_s\}$ with $b_1\leqslant \cdots \leqslant b_s$, we say $M_1 \leqslant_{L} M_2$ in this order if
		\begin{itemize}
			\item  $r \leqslant s$ and
			\item if $r=s$, then there exists a positive integer $i \leqslant r$ such that $a_j=b_j$ for all $1\leqslant j < i$ and $a_i < b_i$. 
		\end{itemize}
	\end{definition}

	\begin{definition}
		\label{def:ssmt}
		Let $\lambda$ be a partition, and $S$ be an ordered set.  A \defn{semistandard multiset tableau (SSMT)} of shape $\lambda$ is a filling of the cells of the Young diagram of $\lambda$ with the entries from $\mathcal{A}_S$ such that
		\begin{itemize}
			\item the entries increase strictly along each column and
			\item the entries increase weakly along each row.
		\end{itemize}
	\end{definition}
	The \defn{content} of a  SSMT $P$ is the multiset obtained by the disjoint union of entries of $P$ (see \cite[Section 2.6]{OZS}). Let SSMT($\lambda,C)$ denote the set of all semistandard multiset tableau of shape $\lambda$ and content $C$.

	\subsection{Centralizer algebras}
	\label{sec:twist-perm-repr}
	For a finite set $X$, let  $F[X]$ denote the space of $F$-valued functions on $X$. The  set of \defn{indicator functions} $1_x$ for $x\in X$ is a basis of $F[X]$. By a $G$-set $X$, we mean a group $G$ acts on $X$ on the left.  Then $G$ also acts on $F[X]$ as follows:
	\begin{equation}
		\label{eq:perm-rep}
		g\cdot f(x) = f(g^{-1}{x}), \text{ for } x\in X, \; g\in G, \text{ and } f\in F[X].
	\end{equation} 
	The space $F[X]$ is called the \defn{permutation representation} of $G$ associated to the $G$-set $X$.
	Suppose that $X$ and $Y$ are finite $G$-sets.
	Given a function $\phi:X\times Y\to F$, the \defn{integral operator} $\zeta_\phi:F[Y]\to F[X]$ associated to $\phi$ is defined as
	\begin{equation}
		\label{eq:integral-operator}
		(\zeta_\phi f)(x) = \sum_{y\in Y} \phi(x,y)f(y), \text{ for } f\in F[Y].
	\end{equation}
	If $Z$ is another $G$-set and $\phi:X\times Y\to F$ and $\psi:Y\times Z \to F$ are functions, then
	\begin{displaymath}
		\zeta_\phi \circ \zeta_{\psi} = \zeta_{\phi*\psi},
	\end{displaymath}
	where $\phi*\psi:X\times Z\to F$ is the {convolution product}
	\begin{equation}
		\label{eq:convolution-product}
		\phi*\psi(x,z) = \sum_{y\in Y} \phi(x,y)\psi(y,z).
	\end{equation}
	
	Let $(G\setminus X\times Y)$ denote the set of orbits of the diagonal action of $G$ on $X\times Y$.
	From~\cite[Theorem 2.4.4]{rtcv} we have:
	\begin{theorem}
		\label{corollary:twisted-intertwiner}
		Let $X$ and $Y$ be finite $G$-sets and for $O\in (G\setminus X\times Y)$, define
		\begin{displaymath}
			\phi_O(x,y) =
			\begin{cases}
				1 &\text{if } (x,y)\in O,\\
				0 &\text{otherwise}.
			\end{cases}
		\end{displaymath}
		Write $\zeta_O=\zeta_{\phi_O}$.
		Then the set
		\begin{displaymath}
			\{\zeta_O\mid O\in (G\setminus X\times Y)\}
		\end{displaymath}
		is a basis for $\Hom_G(F[Y],F[X])$.
		Consequently,
		\begin{displaymath}
			\dim \Hom_G(F[Y],F[X]) = |G\setminus (X\times Y)|.
		\end{displaymath}
	\end{theorem}

	\subsection{Partition algebras}\label{pa}
	Let $k$ be a positive integer. A \defn{set partition} of $\{1,2,\ldots,k,1',2',\ldots,k'\}$
	is of the form $\{B_1,B_2,\ldots,B_l\}$, where 
	$B_1,B_2,\ldots,B_l$ are mutually disjoint sets such that 
	\begin{displaymath}
		\sqcup_{i=1}^l B_i= \{1,2,\ldots,k,1',2',\ldots,k'\}.
	\end{displaymath}
	A set partition can be drawn as a 
	graph, called a \defn{partition diagram}, which has vertices on two rows. The vertices of the top and the bottom rows are $\{1,2,\ldots,k\}$ and $\{1',2',\ldots,k'\}$
	respectively. For $i,j\in\{1,2,\ldots,k,1',2',\ldots,k'\}$, there is a path between $i$ and $j$ if only if 
	$i,j\in B_s$, for $1\leqslant s\leqslant l$. If the part $B_{s}$ is non-empty, it is called  a \defn{block} in the corresponding partition diagram. Unless stated otherwise, we omit empty parts in writing a set partition. Two partition diagrams are considered to be the same  if and only if they have the same underlying set partition.
	\begin{example}
\label{eg:block}
		The partition 
		$$\{\{1,2,1',3'\},\{3,5,4'\},\{4,2',5'\}\}$$ of  $\{1,2,3,4,5,1',2',3',4',5'\}$ corresponds to the following partition diagram:
		\begin{center}
			\begin{tikzpicture}
				[scale=1,
				mycirc/.style={circle,fill=black, minimum size=0.1mm, inner sep = 1.5pt}]
				
				\node[mycirc,label=above:{$1$}] (n1) at (0,1) {};
				\node[mycirc,label=above:{$2$}] (n2) at (1,1) {};
				\node[mycirc,label=above:{$3$}] (n3) at (2,1) {};
				\node[mycirc,label=above:{$4$}] (n4) at (3,1) {};
				\node[mycirc,label=above:{$5$}] (n5) at (4,1) {};
				\node[mycirc,label=below:{$1'$}] (n1') at (0,0) {};
				\node[mycirc,label=below:{$2'$}] (n2') at (1,0) {};
				\node[mycirc,label=below:{$3'$}] (n3') at (2,0) {}; 
				\node[mycirc,label=below:{$4'$}] (n4') at (3,0) {}; 
				\node[mycirc,label=below:{$5'$}] (n5') at (4,0) {}; 
				
				\draw (n1)--(n1');
				\draw (n1)..controls(0.5,0.5).. (n2);
				\draw (n1')..controls(1,0.3)..(n3');
				\draw (n4)--(n2');
				\draw (n4)--(n5');
				\draw (n5)--(n4');
				\draw (n3)..controls(3,0.5)..(n5);
			\end{tikzpicture}
		\end{center}
	\end{example}
	Let $\mathcal{A}_k$ denote the set of all partition diagrams on $\{1,2,\ldots,k,1',2',\ldots,k'\}$.
	 The \defn{partition algebra} $\Pa_k(\xi)$ is the free module over the polynomial ring $F[\xi]$ with basis $\mathcal{A}_k$.
	Given $d_1,d_2\in \mathcal{A}_k$, let $d_1\circ d_2$ denote the \defn{concatenation} of $d_1$ with $d_2$, i.e., place $d_2$ on the top of $d_{1}$, and then identify the bottom vertices of $d_2$ with the top vertices of $d_1$ and remove any connected components that lie
	completely in the middle row. Borrowing analogy from the composition of maps, in the composition $d_1\circ d_2$, we apply $d_2$ followed by $d_1$, reading maps from top to bottom. This differs from the standard convention but it has been adapted, for example, in Bloss~\cite[p. 694]{Bloss1}.  Define the \defn{multiplication of partition diagrams} as follows:
	\begin{equation}\label{mult_partition} 
		d_1d_2:=\xi^{[d_1\circ d_2]}d_1\circ d_{2}
	\end{equation}
	where $ [d_1\circ d_2]$ denotes the number of connected components that lie entirely in the middle row while computing $d_1\circ d_{2}$. By linearly extending the multiplication~\eqref{mult_partition}, $\Pa_{k}(\xi)$ becomes an associative unital algebra over $F[\xi]$.
	
	\begin{example}
		The concatenation $d_1\circ d_2$ of $$d_1=\{\{1,1'\},\{2\},\{3\},\{4\},\{2',3'\},\{5,4',5'\}\}$$ with
		$$d_2=\{\{1,2,1'\},\{3,5\},\{2',3'\},\{4'\},\{4,5'\}\}$$ is illustrated below:
		\begin{center}
			\begin{tikzpicture}
				[scale=1,
				mycirc/.style={circle,fill=black, minimum size=0.1mm, inner sep = 1.5pt}]
				\node (1) at (-1,0.5) {$d_2 =$};
				\node[mycirc,label=above:{$1$}] (n1) at (0,1) {};
				\node[mycirc,label=above:{$2$}] (n2) at (1,1) {};
				\node[mycirc,label=above:{$3$}] (n3) at (2,1) {};
				\node[mycirc,label=above:{$4$}] (n4) at (3,1) {};
				\node[mycirc,label=above:{$5$}] (n5) at (4,1) {};
				\node[mycirc,label=below:{$1'$}] (n1') at (0,0) {};
				\node[mycirc,label=below:{$2'$}] (n2') at (1,0) {};
				\node[mycirc,label=below:{$3'$}] (n3') at (2,0) {}; 
				\node[mycirc,label=below:{$4'$}] (n4') at (3,0) {}; 
				\node[mycirc,label=below:{$5'$}] (n5') at (4,0) {}; 
				
				\draw (n1)--(n1');
				\draw (n1)..controls(0.5,0.5).. (n2);
				\draw (n4)--(n5');
				\draw (n3)..controls(3,0.5)..(n5);
				\draw (n2')..controls(1.5,0.5)..(n3');
				
				\node (1) at (-1,-2) {$d_1 =$};
				\node[mycirc,label=above:{$1$}] (n6) at (0,-1.5) {};
				\node[mycirc,label=above:{$2$}] (n7) at (1,-1.5) {};
				\node[mycirc,label=above:{$3$}] (n8) at (2,-1.5) {};
				\node[mycirc,label=above:{$4$}] (n9) at (3,-1.5) {};
				\node[mycirc,label=above:{$5$}] (n10) at (4,-1.5) {};
				\node[mycirc,label=below:{$1'$}] (n6') at (0,-2.5) {};
				\node[mycirc,label=below:{$2'$}] (n7') at (1,-2.5) {};
				\node[mycirc,label=below:{$3'$}] (n8') at (2,-2.5) {}; 
				\node[mycirc,label=below:{$4'$}] (n9') at (3,-2.5) {}; 
				\node[mycirc,label=below:{$5'$}] (n10') at (4,-2.5) {}; 
				
				\draw (n6)--(n6');
				\draw (n7')..controls(1.5,-2)..(n8');
				\draw (n10)--(n10');
				\draw (n9')..controls(3.5,-2)..(n10');
				
				\draw[dashed] (n1')..controls(-0.5,-1)..(n6);
				\draw[dashed] (n2')..controls(0.5,-1)..(n7);
				\draw[dashed] (n3')..controls(1.5,-1)..(n8);
				\draw[dashed] (n4')..controls(2.5,-1)..(n9);
				\draw[dashed] (n5')..controls(3.5,-1)..(n10);
			\end{tikzpicture}
		\end{center}
		In the above, there are exactly two connected components which lie entirely in the middle, so the multiplication
			\begin{center}
			\begin{tikzpicture}
				[scale=1,
				mycirc/.style={circle,fill=black, minimum size=0.1mm, inner sep = 1.5pt}]
				\node (1) at (-1,0.5) {$d_1d_2 =\xi^{2}$};
				\node[mycirc,label=above:{$1$}] (n1) at (0,1) {};
				\node[mycirc,label=above:{$2$}] (n2) at (1,1) {};
				\node[mycirc,label=above:{$3$}] (n3) at (2,1) {};
				\node[mycirc,label=above:{$4$}] (n4) at (3,1) {};
				\node[mycirc,label=above:{$5$}] (n5) at (4,1) {};
				\node[mycirc,label=below:{$1'$}] (n1') at (0,0) {};
				\node[mycirc,label=below:{$2'$}] (n2') at (1,0) {};
				\node[mycirc,label=below:{$3'$}] (n3') at (2,0) {}; 
				\node[mycirc,label=below:{$4'$}] (n4') at (3,0) {}; 
				\node[mycirc,label=below:{$5'$}] (n5') at (4,0) {}; 
				
				\draw (n1)--(n1');
				\draw (n1)..controls(0.5,0.5).. (n2);
				\draw (n4)--(n5');
				\draw (n3)..controls(3,0.5)..(n5);
				\draw (n2')..controls(1.5,0.5)..(n3');
				\draw (n4')..controls(3.3,0.5)..(n5');
			\end{tikzpicture}
		\end{center}
	\end{example}
	Now we define another basis of $\Pa_k(\xi)$, which is particularly essential for this paper.
	Given $d,d'\in \mathcal{A}_k$, we say $d'$ is coarser than $d$, denoted as $d' \leqslant d$, if whenever $i,j\in\{1,2,\ldots,k,1',2',\ldots,k' \}$ are in the same block in $d$, then $i$ and $j$ are in the same block in $d'$.	For a partition diagram $d$, define the element $x_d \in  \Pa_k(\xi)$ by setting:
	\begin{equation}\label{eq:xd}
		d = \sum\limits_{d' \leqslant d} x_{d'}.
	\end{equation}
	It can be easily seen that the transition matrix between $\{d\}$ and  $\{x_d \}$ is unitriangular. Thus $\{x_d \}$ is also a basis, known as the \defn{orbit basis}, of the partition algebra $ \Pa_k(\xi)$.
	
	The structure constants of $\Pa_k(\xi)$ with respect to the orbit basis $\{x_d\}$ is given in~\cite[Theorem 4.8]{BH}. To state this result, we need the following definitions. 
	
	\begin{definition}\label{def:falling_factorial}
		For a non-negative integer $l$ and $f(\xi)\in F[\xi]$, the \defn{falling factorial polynomial} is $(f(\xi))_l=f(\xi)(f(\xi)-1)\cdots(f(\xi)-l+1)$.
	\end{definition}

	\begin{definition}\label{eg:par_alg_diag}
		Given a partition diagram $d=\{B_1,\ldots,B_n\}\in \mathcal{A}_k$, we define 
		\begin{displaymath}
			B_j^u:=B_j \cap \{1,\ldots, k\} \text{ and }  B_j^l:=B_j \cap \{1',\ldots, k'\} \text{ for $1\leqslant j\leqslant n$}.
		\end{displaymath}                                                                                       
		Then we may also represent  $d$ as the set of tuples
		$d=\{(B_1^u, B_1^l),\ldots, (B_n^u, B_n^l)\}$ (this viewpoint is crucially used in Equation~\eqref{eq:diag_operator} and Section \ref{subsec:embed}). Also define $d^u:=\{B_1^u ,\ldots,B_n^u\}$ and $d^l:=\{B_1^l,\ldots,B_n^l\}$.
	\end{definition}

	For $d_1,d_2\in \mathcal{A}_k$, we say $d_1\circ d_2$ \defn{matches} in the middle if the set partition $d_{1}^{u}$ is same as the set partition obtained from $d_2^{l}$ after ignoring the primes on numbers in $d_2^l$. For a partition diagram $d\in \mathcal{A}_k$, let $|d|$ denote the number of blocks in $d$. From~\cite[Lemma 3.1]{MS}, we recall, in the following theorem, the structure constants of $\Pa_{k}(\xi)$ with respect to the basis $\{x_d\}$.

	\begin{theorem}\label{thm:xd} For $d_1,d_2\in \mathcal{A}_k$, we have
		\begin{align}
			x_{d_1}x_{d_2}=\begin{cases}
				\sum_{d} (\xi-|d|)_{[d_1\circ d_2]} x_{d} & \text{ if $d_{1}\circ d_{2}$ matches in the middle},\\
				0 & \text{otherwise},
			\end{cases}
		\end{align}
		where the sum is over all the coarsenings $d$ of $d_1\circ d_2$ which are 
		obtained by connecting a block of $d_2$, which lie entirely in the top row of $d_2$, with a block of $d_1$, which lie entirely in the bottom row of $d_1$. 
		
	\end{theorem}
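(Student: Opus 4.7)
The plan is to verify the formula by passing through the faithful action of $\Pa_k(n)$ on $(F^n)^{\otimes k}$ for $n \geq 2k$, and then to invoke the polynomiality of the structure constants in $\xi$.

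First, I would establish, by inverting the defining relation $d = \sum_{d'\leq d} x_{d'}$, that under this action the orbit basis element $x_d$ admits a transparent matrix-element description: the coefficient of $e_{i_1}\otimes\cdots\otimes e_{i_k} \mapsto e_{j_1}\otimes\cdots\otimes e_{j_k}$ is $1$ if the labelling of the vertex set of $d$ by $(i_1,\ldots,i_k,j_1,\ldots,j_k)$ is constant on each block of $d$ \emph{and} takes distinct values on distinct blocks, and is $0$ otherwise. Equivalently, $x_d$ picks out the single $S_n$-orbit of labellings of orbit type $d$.

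Next, I would compute $x_{d_1}x_{d_2}$ as an operator by summing over intermediate tuples $(k_1,\ldots,k_k)$. An intermediate tuple contributes only if the middle-row labels, together with the bottom labels of $d_1$, realize the orbit pattern $d_1$, \emph{and} together with the top labels of $d_2$ realize the orbit pattern $d_2$; the middle labels are thereby constrained to simultaneously realize the set partitions $d_1^u$ and $d_2^l$, so the product vanishes unless $d_1^u = d_2^l$, disposing of the second case. Under this matching assumption, I would organize the surviving middle labels by the block structure of the stacked diagram $d_1\cup d_2$ (middle rows identified, middle floaters not yet deleted): middle vertices in a block that also touches the top of $d_2$ or the bottom of $d_1$ have labels forced by the top/bottom, while the $[d_1\circ d_2]$ floating middle blocks carry free labels, subject only to being distinct among themselves and from the $|d|$ labels already appearing on the final top-and-bottom pattern $d$. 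Counted one floater at a time, this yields exactly the falling factorial $(\xi-|d|)_{[d_1\circ d_2]}$. On the top-and-bottom row, two vertices share a label precisely when they already share a block of $d_1\circ d_2$, or when they lie in a top-only block of $d_2$ and a bottom-only block of $d_1$ whose labels happen to coincide---the only pairs whose distinctness is not forced by either orbit condition---yielding exactly the coarsenings enumerated in the theorem.

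The main obstacle will be the careful bookkeeping in this last step: identifying precisely which pairs of blocks of $d_1\circ d_2$ are free to coincide (only top-only blocks of $d_2$ with bottom-only blocks of $d_1$, since the orbit constraints of $x_{d_1}$ and $x_{d_2}$ are local to each factor), and verifying that the $[d_1\circ d_2]$ floating middle labels contribute exactly $(\xi-|d|)_{[d_1\circ d_2]}$ independently of the coarsening $d$. Once the identity has been verified for infinitely many specializations $\xi=n$, polynomiality of the structure constants in $\xi$ promotes it to an identity in $\Pa_k(\xi)$ over $F[\xi]$.
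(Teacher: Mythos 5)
The paper offers no proof of this statement at all: it is recalled verbatim from the literature (the sentence preceding it cites Lemma 3.1 of [MS], and the same product rule is [BH, Theorem 4.8]). So there is nothing in the paper to compare your argument against step by step; what you propose is an actual proof where the authors supply only a citation. That said, your route is correct and is the natural one given the surrounding text: the paper already \emph{defines} $\phi_k(x_d)$ as the operator whose matrix entries are indicators of labellings that are constant on blocks and injective across distinct blocks, so composing $\phi_k(x_{d_1})$ with $\phi_k(x_{d_2})$ and counting intermediate tuples gives exactly the analysis you describe --- vanishing unless $d_1^u=d_2^l$, forced labels on every middle vertex lying in a component that reaches the outer rows, $(\xi-|d|)_{[d_1\circ d_2]}$ choices for the floating middle components, and coarsenings arising only from collisions between top-only blocks of $d_2$ and bottom-only blocks of $d_1$, since those are the only pairs whose distinctness neither orbit condition enforces. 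The concluding polynomiality step is exactly the device the authors themselves use for $\MP_k(\xi)$ (e.g.\ Corollary~\ref{coro:asso}).

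Two points should be made explicit in a full write-up. First, you need $\phi_k$ to be multiplicative on the diagram basis and injective for $n\geq 2k$; both are contained in Theorem~\ref{thm:SWD_partition} together with Theorem~\ref{corollary:twisted-intertwiner}, and are established in the cited sources without reference to the orbit-basis product rule, so there is no circularity --- but the dependence should be stated. Second, when counting a floater's admissible labels you need it to avoid all $|d|$ outer labels, not merely the labels of blocks of its own factor; this holds because every block of $d$ meets the top row of $d_2$ or the bottom row of $d_1$, so at least one of the two orbit conditions applies to it. That one-line remark is worth including, since each condition alone only forbids collisions within its own diagram.
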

	\subsection{Schur--Weyl duality}  Let $F^n$ be the $n$-dimensional vector space over $F$ with 
	standard basis $\{e_{1},e_{2},\ldots,e_{n}\}$. The group 
	$S_{n}$ acts on a basis element $e_{i}$ as:
	\begin{displaymath}
		\sigma e_{i}:=e_{\sigma(i)},\text{ where } \sigma\in S_{n}.
	\end{displaymath}
	So $S_{n}$ acts on the $k$-fold tensor product $(F^n)^{\otimes k}$ diagonally.

	For $d\in \mathcal{A}_k$, define
	
	\[(d)^{i_{1},\ldots,i_{k}}_{i_{1'},\ldots,i_{k'}}=\begin{cases}
		1 & \text{ if }
		i_{r}=i_{s}  \text{ iff }r,s\in\{1,\dotsc,k,1',\dotsc,k'\}\\ & \text{ are in the same block of $d$},\\
		0 &\text{ otherwise}.
	\end{cases}\]

	The action of $x_d$ on a basis element of $(F^n)^{\otimes k}$ given by
\begin{equation}\label{eq:part map}
		x_d(e_{i_{1}}\otimes\cdots\otimes e_{i_{k}})=\sum_{i_{1'},\ldots,i_{k'}} (d)^{i_{1},\ldots,i_{k}}_{i_{1'},\ldots,i_{k'}} e_{i_{1'}}\otimes \cdots \otimes e_{i_{k'}}
	\end{equation}
	defines a map $\phi_{k}:\Pa_{k}(n)\rightarrow \End_F((F^n)^{\otimes k})$.
	The Schur--Weyl duality between the actions of the group algebra $F[S_n]$ and the partition algebra $\Pa_k(n)$ on $(F^n)^{\otimes k}$ is given in~\cite{Jones} and in~\cite{Martin}. We state, in the following theorem, the Schur--Weyl duality from~\cite[Theorem 3.6]{HR}, which is more applicable in this paper.
	
	\begin{theorem}\phantomsection \label{thm:SWD_partition}
		\begin{enumerate}
			\item    The image of map $\phi_{k}:\Pa_{k}(n)\rightarrow \End_{F}((F^n)^{\otimes k})$ is $\End_{S_n}((F^n)^{\otimes k})$. The kernel of $\phi_{k}$ is spanned by $$\{x_{d}\mid d\text{ has more than n blocks}\}.$$ In particular, when $n\geqslant 2k$, $\Pa_k(n)$ is isomorphic to $\End_{S_{n}}((F^n)^{\otimes k})$.
			\item  The group  algebra  $F[S_n]$ generates the centralizer algebra $\End_{\Pa_{k}(n)}((F^n)^{\otimes k})$.
		\end{enumerate}    
	\end{theorem}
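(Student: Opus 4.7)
The plan is to prove both parts by analyzing the $S_n$-orbit structure on the standard basis of $(F^n)^{\otimes k}$, then invoking the double centralizer theorem for part (2). First I would check that $\phi_k$ is an algebra homomorphism by verifying that the multiplication formula for the orbit basis given in the theorem recalled from \cite{BH,MS} agrees with composition of the corresponding endomorphisms. The key observation is that when one composes $\phi_k(x_{d_1})\circ\phi_k(x_{d_2})$ on a basis vector $e_{i_1}\otimes\cdots\otimes e_{i_k}$, the intermediate sum is over tuples $(i_{1'},\dots,i_{k'})$ whose equality pattern is exactly $d_2^l$; the condition that $d_1\circ d_2$ matches in the middle is exactly what makes the two indicator conditions compatible, and the factor $(\xi-|d|)_{[d_1\circ d_2]}$ (with $\xi\mapsto n$) arises as the count of ways to choose new distinct values for the internal blocks of the concatenation.

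Next I would show $\phi_k(\Pa_k(n))\subseteq \End_{S_n}((F^n)^{\otimes k})$, which is immediate from the definition: the scalar $(d)^{i_1,\dots,i_k}_{i_{1'},\dots,i_{k'}}$ depends only on the equality pattern of the combined tuple, so applying $\sigma\in S_n$ to all indices does not change it, and hence $\phi_k(x_d)$ commutes with the diagonal $S_n$-action. To get equality of the image with the full centralizer, I would use the standard orbit/double-coset argument: the space $(F^n)^{\otimes k}$ has the basis $\{e_{i_1}\otimes\cdots\otimes e_{i_k}\}$ indexed by maps $[k]\to[n]$, so $\End_F((F^n)^{\otimes k})$ has basis $\{E^{i_1,\dots,i_k}_{i_{1'},\dots,i_{k'}}\}$ indexed by pairs of such maps; a spanning set for $\End_{S_n}((F^n)^{\otimes k})$ is obtained by summing over each $S_n$-orbit on these pairs, and orbits are parametrized precisely by set partitions $d$ of $\{1,\dots,k,1',\dots,k'\}$ with at most $n$ blocks. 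Comparing with the definition of $\phi_k(x_d)$ shows that these orbit sums are exactly $\phi_k(x_d)$ for $d$ having $\leq n$ blocks, while $\phi_k(x_d)=0$ when $d$ has more than $n$ blocks (since no tuple of indices in $[n]$ can realize that equality pattern). This simultaneously identifies the image and the kernel; when $n\geq 2k$ no partition diagram has more than $n$ blocks (any $d\in A_k$ has at most $2k$ blocks), so $\phi_k$ is injective and hence an isomorphism.

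For part (2), once the image of $\phi_k$ is known to equal $\End_{S_n}((F^n)^{\otimes k})$, I would apply the double centralizer theorem. The group algebra $FS_n$ is semisimple (characteristic zero), so $(F^n)^{\otimes k}$ decomposes as a semisimple $FS_n$-module, and the double centralizer theorem gives that the centralizer of $\End_{S_n}((F^n)^{\otimes k})$ inside $\End_F((F^n)^{\otimes k})$ is the image of $FS_n$. Since the centralizer only depends on the image, and by part (1) $\phi_k(\Pa_k(n))=\End_{S_n}((F^n)^{\otimes k})$, this image is exactly $\End_{\Pa_k(n)}((F^n)^{\otimes k})$, which is what we wanted.

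The main obstacle I anticipate is the verification that $\phi_k$ respects multiplication: one has to carefully match the combinatorial bookkeeping of blocks that get merged or collapsed in the concatenation $d_1\circ d_2$ with the bookkeeping of equality patterns of indices in the matrix product. The matching-in-the-middle condition and the falling factorial $(\xi-|d|)_{[d_1\circ d_2]}$ are precisely what encode this, but tracing through all cases and confirming that the coarsenings $d$ of $d_1\circ d_2$ range over exactly the possible equality patterns of the output basis vector is the technically delicate step. Everything else follows from the clean orbit-counting picture and the double centralizer theorem.
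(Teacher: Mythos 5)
Your proposal is correct. Note, however, that the paper does not prove this statement at all: it is recalled verbatim from the literature (Jones, Martin, and Halverson--Ram \cite[Theorem 3.6]{HR}), so there is no in-paper proof to compare against. Your argument is the standard one and is sound: the orbit-counting step (orbits of $S_n$ on pairs of tuples in $[n]^k\times[n]^k$ are indexed by set partitions of $\{1,\dots,k,1',\dots,k'\}$ with at most $n$ blocks, and the corresponding orbit sums of matrix units are exactly the $\phi_k(x_d)$) is precisely the specialization of the paper's Theorem~\ref{corollary:twisted-intertwiner} to $X=Y=[n]^k$, and it simultaneously yields the image, the kernel, and the isomorphism for $n\geq 2k$; the double centralizer theorem then gives part (2) since $FS_n$ is semisimple in characteristic $0$ and the $\Pa_k(n)$-centralizer depends only on the image of $\Pa_k(n)$. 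This is also exactly the strategy the authors themselves deploy for the multiset analogue in Theorems~\ref{basis of centraliser algebra} and~\ref{thm:swd_mult}. The one step you flag as delicate --- verifying that $\phi_k$ intertwines the orbit-basis product formula with composition of operators --- is indeed the only nontrivial bookkeeping, but it is not strictly needed for part (1) as you have structured it (image and kernel follow from the orbit argument alone); it is needed only to know $\phi_k$ is an algebra map, and your description of where the matching-in-the-middle condition and the falling factorial $(n-|d|)_{[d_1\circ d_2]}$ come from is accurate.
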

	
\textbf{Tensor space as a permutation module.} We will interpret $(F^n)^{\otimes k}$ as a permutation module, and describe the action of $x_d$ on this permutation module (Equation \eqref{eq:diag_operator}). This viewpoint will be specifically relevant in the proof of Theorem~\ref{cor:new_lambda}.

	For the rest of this section we fix a positive integer $n$, and fix a basis $e_1,\dotsc,e_n$ of $F^n$. We will deviate from convention specified in Section \ref{pa} by padding a partition diagram containing $s\leq n$ components with $n-s$ empty sets. 
	
	An \defn{ordered set partition} of $\{1,\dotsc,k\}$ into $n$ parts is a sequence ${A}=(A_1,\dotsc,A_n)$ of disjoint sets such that $\sqcup A_i = \{1,\dotsc, k\}$. Let $\mathcal{A}(n,k)$ denote the set of ordered set partitions of $\{1,\dotsc,k\}$ into $n$ parts. The group $S_n$ acts on an element in $\mathcal{A}(n,k)$ by permuting the elements of the tuple. So the space $F[\mathcal{A}(n,k)]$ is a permutation module with basis 
	$\{1_{A}\mid A\in\mathcal{A}(n,k)\}$.

	The tensor space  $(F^n)^{\otimes k}$  has a basis $\{e_{j_1}\otimes \dotsb \otimes e_{j_k}\mid 1\leqslant j_1,\dotsc,j_k \leqslant n \}$. The following map is an $S_n$-linear isomorphism
	\begin{align}
		\label{ord par}
		e_{j_1} \otimes \cdots \otimes e_{j_k} \rightarrow 1_{(A_1,\ldots,A_n)},
	\end{align}
	where $A_m= \{l \mid  e_{j_l}=e_{m},\, 1 \leqslant l \leqslant k\}$  for $1\leqslant m\leqslant n$ and $\sqcup_{m=1}^{n}A_m=\{1,\ldots,k\}$. Note that these sets may be empty.    
	
	\begin{example}
		\label{eg:indicate}
		The element $e_2 \otimes e_1 \otimes e_1$ in $(F^4)^{\otimes 3}$ is represented by the indicator function corresponding to the tuple $(\{2,3\},\{1\},\emptyset,\emptyset)$.
	\end{example}
	We transport  the action \eqref{eq:part map} of $x_d$ on $F[\mathcal{A}(n,k)]$ via the $S_n$-linear isomorphism $(F^n)^{\otimes k}\cong F[\mathcal{A}(n,k)]$.

	For  a tuple $r=(r_1,\ldots,r_n)$, define \defn{\text{$\set(r)$}}:=$\{r_1,\ldots,r_n\}$. We shall use this notation repeatedly, specifically related to the action of $x_d$.
	
	When $d\in \mathcal{A}_k$ has more than $n$ blocks then the map  $x_d$ is the zero map on $F[\mathcal{A}(n,k)]$. Assume that $d\in\mathcal{A}_k$ has at most $n$ blocks so that we can write  $d=\{B_1,\ldots,B_n\}$. 
	Let $A=(A_1,\ldots,A_n)$ and $C=(C_1,\ldots,C_n)$ be ordered set partitions of $\{1,2,\ldots,k\}$ and $\{1',2',\ldots,k'\}$ respectively. Now using Definition \ref{eg:par_alg_diag}, the action of $x_d$ is given  as                                                                                                                                                        
	\begin{align}
		\label{eq:diag_operator}
		x_d(1_A) = \begin{cases}
			\sum_{\{C\mid\{(A_1, C_1),\ldots,(A_n,C_n)\}=d\}}1_C & \text{ if } \,  \set(A)=d^u,\\
			0 & \text{ otherwise}.
		\end{cases}
	\end{align}
	In the above, we make the convention that for an ordered set partition  $C$ of $\{1',2',\ldots,k'\}$,  the element $1_C$ of $F[\mathcal{A}(n,k)]$ is obtained by ignoring the primes on elements of parts of $C$.

	\section{The multiset partition algebra}
	\label{sec:mpa}
	
	For a positive integer $k$, each multiset partition of $\{1^k,1'^k\}$ can be represented by the equivalence class of certain graphs by a procedure described below (see Equation~\eqref{al:bij}). We use these graphs to define the multiset partition algebra $\MP_k(\xi)$. 
	
	Let $\Gamma$ be a \defn{multigraph} (i.e., multiple edges between the same pair of vertices) whose vertices are arranged in two rows. The vertices in the top row are labelled by $0,1,\ldots, k$ and the vertices in the bottom row are also labelled by $0,1,\ldots, k$. Let $E_{\Gamma}$ denote the multiset of edges of $\Gamma$. Every edge of $\Gamma$ connects a vertex in the top row to a vertex in the bottom row. 
	
	We define a \defn{weight function} $w: E_{\Gamma} \to \mathbb{Z}_{\geqslant 0}^2$ by setting $w(e)=(i,j)$ for every edge $e$ from the vertex $i$ in the top row to the vertex $j$ in the bottom row. We say an edge $e$ is non-zero if $w(e)\neq (0,0)$. The \defn{weight of a graph} $\Gamma$ is defined as $w(\Gamma):=\sum_{e\in E_{{\Gamma}}}w(e).$ 
	
	Let $\mathcal{B}_k$ be the set of all such multigraphs $\Gamma$ with the weight $w(\Gamma)=(k,k)$. Two  multigraphs in $\mathcal{B}_k$ are said to be \defn{equivalent} if they have the same non-zero weighted edges.  Denote by $\tilde{\mathcal{B}}_{k}$ the set of all equivalence classes in $\mathcal{B}_{k}$.
	
	The \defn{rank of a graph} $\Gamma$, denoted $ \rn(\Gamma)$, is the number of non-zero weighted edges of $\Gamma$. Since all graphs in the same equivalence class have the same rank, the rank of the equivalence class $[\Gamma]$ is defined as  $\rn([\Gamma])=\rn(\Gamma)$. 
	\begin{example}\label{graph}
		The following two graphs in $\mathcal{B}_5$ have rank four, and they are equivalent. 
		
			\begin{displaymath}
				\xymatrix@!=2.5pc{  
					\overset{0}{\bullet} \ar@{-}[d] \ar@2{-}[rd] & \overset{1}{\bullet}  & \overset{2}{\bullet}\ar@{-}[dr] &\overset{3}{\bullet}\ar@{-}[dlll] & \overset{4}{\bullet} &\overset{5}{\bullet}\\
					\underset{0}{\bullet} &  \underset{1}{\bullet} & \underset{2}{\bullet} & \underset{3}{\bullet} & \underset{4}{\bullet} & \underset{5}{\bullet}
				}
	\end{displaymath}

\begin{displaymath}
	\xymatrix@!=2.5pc{  
		\overset{0}{\bullet}  \ar@2{-}[rd] & \overset{1}{\bullet}  & \overset{2}{\bullet}\ar@{-}[dr] &\overset{3}{\bullet}\ar@{-}[dlll] & \overset{4}{\bullet} &\overset{5}{\bullet}\\
		\underset{0}{\bullet} &  \underset{1}{\bullet} & \underset{2}{\bullet} & \underset{3}{\bullet} & \underset{4}{\bullet} & \underset{5}{\bullet}
	}
\end{displaymath}
\end{example}	
	In the subsequent examples, we shall omit vertices with no edges incident on them to improve presentation. 
	
	The set $\tilde{\mathcal{B}}_{k}$ is in bijection with the set of multiset partitions of the multiset $\{1^k,1'^k\}$. 
	Explicitly, for a class $[\Gamma]\in \tilde{\mathcal{B}}_k$, let $\{(a_1,b_1),\ldots ,(a_n,b_n)\}$ be the multiset of non-zero edges of $\Gamma$ then we have the following bijective correspondence:
	\begin{align}\label{al:bij}
		[\Gamma] \leftrightarrow \{\{1^{a_i},1'^{b_i}\}\mid i=1,2,\ldots,n\}.
	\end{align} 
	Thus graphs in $\tilde{\mathcal B}_k$ are a diagrammatic interpretation of multiset partitions of $\{1^k,1'^k\}$.
	For example, the multiset partition of $\{1^5,1'^5\}$ associated with the class of graphs in Example \ref{graph} is $\{\{1'\},\{1'\},\{1^2,1'^{3}\},\{1^3\}\}$.

	Let $\MP_k(\xi)$ denote the free module over $F[\xi]$ with basis $\tilde{\mathcal{B}}_{k}$. We show how to multiply two basis elements in $\tilde{\mathcal{B}}_{k}$ by specifying the structure constants with respect to the basis $\tilde{\mathcal{B}}_{k}$.  To do this we define the necessary setup in the next few paragraphs. It is clear from the definition of these structure constants (see Equation~\eqref{eq:poly_gene}) that they are polynomials in $\xi$. 
	
	Let $[\Gamma_1], [\Gamma_2]\in \tilde{\mathcal{B}}_k$ and  pick the respective representatives $\Gamma_1,\Gamma_2$ with $n$ edges, where $n\geqslant \max\{\rn(\Gamma_1),\rn(\Gamma_2)\}$.  To compute the product $[\Gamma_1]*[\Gamma_2]$, we begin by concatenating $\Gamma_1$ with $\Gamma_2$ by
	placing $\Gamma_2$ above $\Gamma_1$ and identify the vertices in the bottom row of $\Gamma_2$ with those in the top row of $\Gamma_1$. A path on this concatenated diagram is read from top to bottom. Explicitly,  a \defn{path}  on this diagram is a sequence $(a,b,c)$ such that $(a,b)$ is an edge of $\Gamma_2$ and $(b,c)$ is an edge of $\Gamma_1$.
	
{\bf Configuration of paths.}	
		A \defn{configuration of $n$ paths} with respect to an ordered pair $(\Gamma_1,\Gamma_2)\in \mathcal B_k\times \mathcal B_k$ is a multiset $P=\{p_1,\ldots, p_n\}$, $p_i=(a_i,b_i,c_i)$, such that the covering condition holds:
		\begin{equation*}
			\begin{array}{l}
				E_{\Gamma_2}=\{(a_i,b_i)\mid   1\leqslant i\leqslant n\}, \text{and}\\
				E_{\Gamma_1}=\{(b_i,c_i)\mid 1\leqslant i\leqslant  n\}.
			\end{array}
		\end{equation*}
	
	For a configuration of $n$ paths $P=\{(a_1,b_1,c_1),\dotsc,(a_n,b_n,c_n)\}$, the rank of $P$ is denoted $\rn(P)$ and defined to be the number of paths $(a,b,c) \neq (0,0,0)$ in $P$. Let $\Gamma_P$ denote the multigraph with edge multiset $E_{\Gamma_P}=\{(a_1,c_1),\dotsc,(a_n,c_n)\}$, obtained by omitting the mid-points of paths in $P$. 

{\bf Support.}	The \defn{support}  $\Supp^n(\Gamma_1,\Gamma_2)$ is the set of all configurations of $n$ paths with respect to $(\Gamma_1 ,\Gamma_2)$.
	 For $[\Gamma]\in \tilde{\mathcal{B}}_{k}$ and $n\geqslant \max\{\rn(\Gamma_1), \rn(\Gamma_2),\rn(\Gamma)\}$, let $\Supp_{\Gamma}^{n}(\Gamma_1,\Gamma_2)$ be the set of configuration of $n$ paths $P$ in $\Supp^{n}(\Gamma_1,\Gamma_2)$ such that $\Gamma_P=\Gamma$.

	\begin{example}
		\label{eg:supp}
		Let $k=2$. Consider the following graph:    
		
		\begin{displaymath}\xymatrix{
				{} \ar @{}[d]|{\Gamma_1\quad=} &
				\overset{0}{\bullet} \ar@{-}[dr] & \overset{1}{\bullet} \ar@{-}[d] \ar@{-}[dl]\\
			{} &	\underset{0}{\bullet} & \underset{1}{\bullet}}
				\end{displaymath} 

		Let $\Gamma_2=\Gamma_1$.
		For $n\geqslant 3$, we add $n-3$ zero-weighted edges to each graph and identify the top row of $\Gamma_1$ with the bottom row of $\Gamma_2$.
		\begin{displaymath}\xymatrix{
			\overset{0}{\bullet} \ar@3{-}[d]|{n-3}\ar@{-}[dr] & \overset{1}{\bullet} \ar@{-}[d] \ar@{-}[dl]\\
				\underset{0}{\bullet}\ar@3{.}[d] & \underset{1}{\bullet}\ar@2{.}[d]\\
			\overset{0}	{\bullet} \ar@3{-}[d]|{n-3}\ar@{-}[dr] & \overset{1}{\bullet} \ar@{-}[d] \ar@{-}[dl]\\
				\underset{0}{\bullet}\ar@{-}[ur] & \underset{1}{\bullet}\\
			}
		\end{displaymath}
		Thus  $\Supp^{n}(\Gamma_1,\Gamma_2)$ comprises four elements $\{P_1,P_2,P_3,P_4\}$ where
		\begin{align*}
			P_1&=\{(0,0,0)^{n-4},(0,1,0),(0,0,1),(1,0,0),(1,1,1)\},\\
			P_2&=\{(0,0,0)^{n-3},(0,1,1),(1,1,0),(1,0,1)\},\\
			P_3&=\{(0,0,0)^{n-3},(0,1,0),(1,0,1),(1,1,1)\},\\ 
			P_4&=\{(0,0,0)^{n-4},(0,0,1),(0,1,1),(1,1,0),(1,0,0)\}.
		\end{align*}
		
		Note that $\Gamma_{P_1}=\Gamma_{P_2}$ and $\rn(P_1)=4=\rn(P_4)$, $\rn(P_2)=3=\rn(P_3)$. The graphs $\Gamma_{P_2}$, $\Gamma_{P_3}$ and  $\Gamma_{P_4}$, corresponding to  configurations $P_2,P_3$ and $P_4$ are represented by:
		
		\begin{displaymath}
			\xymatrix{ 
				{} \ar @{}[d]|{}&
			\overset{0}{\bullet} \ar@3{-}[d]|{n-3}\ar@{-}[dr] & \overset{1}{\bullet}\ar@{-}[d] \ar@{-}[dl]\\
				{}&  \underset{0}{\bullet} & \underset{1}{\bullet} 
			}
			\quad
			\xymatrix{ 
				{} \ar @{}[d]|{}&
				\overset{0}{\bullet} \ar@3{-}[d]|{n-2} & \overset{1}{\bullet}\ar@2{-}[d]\\
				{}&  \underset{0}{\bullet} & \underset{1}{\bullet} 
			}
			\quad
			\xymatrix{ 
				{} \ar @{}[d]|{}&
			\overset{0}{\bullet} \ar@3{-}[d]|{n-4}\ar@2{-}[dr] & \overset{1}{\bullet} \ar@2{-}[dl]\\
				{}& \underset{0}{\bullet} & \underset{1}{\bullet} 
			}	
		\end{displaymath}

		Observe that $\Supp^{n}_{\Gamma_{P_3}}(\Gamma_1,\Gamma_2)=\{P_3\}$ for $n\geqslant 3$ and $\Supp^{n}_{\Gamma_{P_4}}(\Gamma_1,\Gamma_2)=\{P_4\}$ for $n \geqslant 4$, and empty otherwise. Observe that $\Supp^{n}_{\Gamma_{P_2}}(\Gamma_1,\Gamma_2)=\{P_1,P_2\}$ for $n \geqslant 4$ but $\Supp^{3}_{\Gamma_{P_2}}(\Gamma_1,\Gamma_2)=\{P_2\}$.
	\end{example}

{\bf Structure constants.} 	Let $D_\Gamma$ be the set of distinct edges of $\Gamma$. For $(s,t) \in D_\Gamma$, define the multiset $P_{st}=\{b\mid(s,b,t) \in P\}$, the mid-points of paths in $P$ that starts at $s$ and ends at $t$. Let $p_{st}$ be the cardinality of $P_{st}$. For $r=0,1,\dotsc,k$, denote the multiplicity of the  path $(s,r,t)$ occurring in $P$ by $p_{st}(r)$.  \label{para:D}

	The \defn{structure constant} of $[\Gamma]$ in the product $[\Gamma_1]*[\Gamma_2]$ is the following:
	
	\begin{align}\label{eq:poly_gene}
		&\Phi^{[\Gamma]}_{[\Gamma_{1}][\Gamma_2]}(\xi)=\sum_{P \in \Supp_{\Gamma}^{3k}(\Gamma_1,  \Gamma_2)}\mathcal{K}_P \cdot (\xi-\rn(\Gamma))_{[P^{\Gamma_1\circ\Gamma_2}]},
		\text{ where} 
		\\ &\mathcal{K}_P=\frac{1}{p_{00}(1)!\cdots p_{00}(k)!}         
		\prod_{(s,t)\in D_\Gamma\setminus\{(0,0)\}} \binom{p_{st}}{p_{st}(0),\ldots,p_{st}(k)}, \nonumber
	\end{align}	
	and $[P^{\Gamma_1\circ\Gamma_2}]=\sum_{i=1}^{k}p_{00}(i)$, the number of paths $(0,i,0)$ in $P$ for all $i\geqslant 1$.

	\begin{remark}
		\label{rm:rank P}
		For $P \in \Supp^{n}(\Gamma_1,\Gamma_2)$, we observe that $\rn(P)= \rn(\Gamma_P) + [P^{\Gamma_1 \circ \Gamma_2}]$ and $\rn(P)\geqslant \max\{\rn(\Gamma_1),\rn(\Gamma_2)\}$.
	\end{remark}

	\begin{example}	
		\label{eg:struc_const}
		We compute $[\Gamma_1]* [\Gamma_2]$, where $\Gamma_1, \Gamma_2$ are as in Example \ref{eg:supp}. Note $\Gamma_{P_1}=\Gamma_{P_2}=\Gamma_1$. 
		
		By Equation \eqref{eq:poly_gene}, we have
		$$\mathcal{K}_{P_{1}}\cdot(\xi-\rn(\Gamma_1))_{[{P_1}^{\Gamma_1 \circ \Gamma_2}]}=\xi-3,$$
		as $[{P_1}^{\Gamma_1 \circ \Gamma_2}]=1$ and $\mathcal{K}_{P_{1}}=1$. 
		Again,  $$\mathcal{K}_{P_{2}}\cdot(\xi-\rn(\Gamma_1))_{[{P_2}^{\Gamma_1 \circ \Gamma_2}]}=1.$$
		Therefore, $\Phi^{[\Gamma_1]}_{[\Gamma_1] [\Gamma_2]}(\xi)=(\xi-3)+ 1$.

		A similar computation for $P_3, P_4$ yields $\Phi^{[\Gamma_{P_3}]}_{[\Gamma_1] [\Gamma_2]}(\xi)=2(\xi-2)$ and $\Phi^{[\Gamma_{P_4}]}_{[\Gamma_1] [\Gamma_2]}(\xi)=4$.
		Thus finally $[\Gamma_1]*[\Gamma_2]$ is equal to:
		
		\begin{displaymath}
			\xymatrix{ 
				{} \ar @{}[d]|{(\xi-2)}&
				\overset{0}{\bullet} \ar@{-}[dr] & \overset{1}{\bullet}\ar@{-}[d] \ar@{-}[dl]\\
				{}&  \underset{0}{\bullet} & \underset{1}{\bullet} 
			}
			\quad
			\xymatrix{ 
				{} \ar @{}[d]|{ + \quad   2(\xi-2)}&
				\overset{0}{\bullet}  & \overset{1}{\bullet}\ar@2{-}[d]\\
				{}&  \underset{0}{\bullet} & \underset{1}{\bullet} 
			}
			\quad
			\xymatrix{ 
				{} \ar @{}[d]|{+\quad     4}&
				\overset{0}{\bullet} \ar@2{-}[dr] & \overset{1}{\bullet} \ar@2{-}[dl]\\
				{}&  \underset{0}{\bullet} & \underset{1}{\bullet} 
			}	
		\end{displaymath}    
	\end{example}
	The product defined  in Equation~\eqref{eq:poly_gene} endows an algebra structure on $\MP_{k}(\xi)$ over $F[\xi]$. We call $\MP_k(\xi)$ the \defn{multiset partition algebra}.  For a partition $\lambda=(\lambda_1,\ldots,\lambda_l)$ of $k$, let $\Gamma_{\lambda}$ be the graph whose  multiset of edges is $E_{\Gamma_{\lambda}}=\{(\lambda_j,\lambda_j)\mid 1\leqslant j\leqslant l\}$. Consider the subset
	 $\mathcal{U}_k =\{[\Gamma_{\lambda}]\mid \lambda \text{ is a partition of } k\}$ of $\tilde{\mathcal{B}}_k$.
	  It is easy to verify that the element
	  \begin{equation}\label{mpa:id}
	 id:=\sum_{[\Gamma]\in\mathcal{U}_{k}}[\Gamma]
\end{equation}
is the \defn{identity} element of $\MP_k(\xi)$.
		\begin{example}
		\label{eg:identity}
		This example demonstrates multiplication by the identity element for $k=2$. By definition,
		
		\begin{displaymath}
			\xymatrix{ 
				{} \ar @{}[d]|{ id \quad =}&
				\overset{0}{\bullet} & \overset{1}{\bullet}\ar@2{-}[d]\\
				{}& \underset{0}{\bullet} & \underset{1}{\bullet} 
			}
			\quad
			\xymatrix{ 
				{} \ar @{}[d]|{ + }&
				\overset{0}{\bullet}  & \overset{2}{\bullet}\ar@1{-}[d]\\
				{}&  \underset{0}{\bullet} & \underset{2}{\bullet}}	
		\end{displaymath}
		Let $\Gamma_{(1,1)}$ and $\Gamma_{(2)}$ respectively denote the representatives with $3k$ edges, and consider the graph $\Gamma_1$ as below
		\begin{displaymath}
			\xymatrix{
				\overset{0}{\bullet} \ar@3{-}[d]|{3k-3}\ar@{-}[dr] & \overset{1}{\bullet} \ar@{-}[d] \ar@{-}[dl]\\
				\underset{0}{\bullet}\ar@{-}[ur] & \underset{1}{\bullet}\\
			}
		\end{displaymath}
		Then  $\Supp^{3k}(\Gamma_1,\Gamma_{(1,1)})= \{(0,0,0)^3,(0,0,1),(1,1,0),(1,1,1)\}$, while  $\Supp^{3k}(\Gamma_1,\Gamma_{(2)})$ is empty. The graph corresponding to this configuration is $\Gamma_1$.  
		Observe that $\Phi^{[\Gamma_1]}_{[\Gamma_1][\Gamma_\lambda]}(\xi)=1$, and $[\Gamma_1]*id=[\Gamma_1]$. Similarly, $ id*[\Gamma_1]=[\Gamma_1]$.
		\end{example}
	
	\begin{remark}
			For a non-negative integer $k$,  given $[\Gamma] \in \tilde{\mathcal{B}}_{k}$, $\Supp^{3k}(\Gamma,\Gamma_\lambda)$ is nonempty for the representative of a unique $[\Gamma_\lambda]$ in   $\mathcal{U}_{k}$. In this case, $\Supp^{3k}(\Gamma,\Gamma_\lambda)$  comprises a single configuration $P$, and $\Gamma_P=\Gamma$. This  helps us  conclude that $[\Gamma]*id=[\Gamma]=id*[\Gamma]$. 
		\end{remark}

 When $\xi$ is evaluated at a positive integer $n$ we prove  in Theorem~\ref{thm:swd_mult} that the algebra $\MP_k(n)$ is in Schur--Weyl duality with the symmetric group $S_n$ acting on $k$th symmetric power of $F^n$. As an application of Theorem~\ref{thm:swd_mult}, we show in Corollary~\ref{coro:asso} that $\MP_k(\xi)$ is associative. Proposition~\ref{thm:poly} is essential to prove this theorem.

	\begin{definition}
	\label{strc cons}
		For $n\geqslant\max\{\rn([\Gamma]),\rn([\Gamma_1]),\rn([\Gamma_2])\}$, let $\Gamma,\Gamma_1,\Gamma_2$ be the respective representatives with $n$ edges. 	Define 
		\begin{displaymath}
		C_{\Gamma_1 \Gamma_2}^{\Gamma}(n)=\big\{(b_1,\dotsc, b_n)\mid	\{(a_1,b_1,c_1),\ldots, (a_n,b_n,c_n)\} \in \Supp^n_{\Gamma}(\Gamma_1,\Gamma_2)\big\}
		\end{displaymath}
		where $(a_1,c_1)\leqslant \cdots   \leqslant(a_n,c_n)$ are the edges of $\Gamma$ labelled in lexicographically increasing order.
	\end{definition}
	
	\begin{proposition}\label{thm:poly}
		Given $[\Gamma],[\Gamma_1],[\Gamma_2] \in \tilde{\mathcal{B}}_k$ and $n \geqslant \max \{\rn(\Gamma),\rn(\Gamma_1),\rn(\Gamma_2)\}$, we have
		\begin{displaymath}
			\Phi^{[\Gamma]}_{[\Gamma_{1}][\Gamma_2]}(n)=|C_{\Gamma_1\Gamma_2}^{\Gamma}(n)|.
		\end{displaymath}
	\end{proposition}
	
The proof of the above proposition requires the following two lemmas.
	
	\begin{lemma}
		\label{th:cardC}
			Given $[\Gamma],[\Gamma_1],[\Gamma_2] \in \tilde{\mathcal{B}}_k$ and $n \geqslant \max \{\rn(\Gamma),\rn(\Gamma_1),\rn(\Gamma_2)\}$, we have
		\begin{equation}\label{eq:diset}
			|C_{\Gamma_1\Gamma_2}^{\Gamma}(n)|=\sum_{P \in \Supp_{\Gamma}^n({\Gamma_1\Gamma_2})}\mathcal{K}_P\cdot (n-\rn(\Gamma))_{[P^{\Gamma_1\circ\Gamma_2}]}.
		\end{equation}
	\end{lemma}

	\begin{proof}
		If $\Supp_{\Gamma}^n(\Gamma_1,\Gamma_2)$
		is empty, then $C^{\Gamma}_{\Gamma_1\Gamma_2}(n)=\emptyset$. 
		Otherwise, for $P \in \Supp_{\Gamma}^n(\Gamma_1,\Gamma_2)$, let $ D_\Gamma$ and $P_{st}$ be as defined just before Equation~\eqref{eq:poly_gene}. Let $(s_1,t_1),\dotsc,(s_j,t_j)$ be the elements of $D_\Gamma$ numbered in lexicographically increasing order.  Let $Perm(P_{st})$ be the set of distinct arrangements of the elements of $P_{st}$. Define 
		\begin{displaymath}
			\text{Arr}(P)= Perm(P_{s_1t_1})\times \dotsb \times Perm(P_{s_jt_j}).
		\end{displaymath}
		For each configuration $P$, $\text{Arr}(P)$ is a subset of $C^{\Gamma}_{\Gamma_1\Gamma_2}(n)$, and for distinct configurations $P,Q \in \Supp_{\Gamma}^n(\Gamma_1,\Gamma_2)$ $$\text{Arr}(P)\cap \text{Arr}(Q)=\emptyset.$$ Thus
		\begin{displaymath}
			C_{\Gamma_1\Gamma_2}^{\Gamma}(n)=\bigsqcup_{P \in \Supp_{\Gamma}^n({\Gamma_1\Gamma_2})}\text{Arr}(P).
		\end{displaymath}  
		Note that, $|Perm(P_{st})|=\binom{p_{st}}{p_{st}(0),\ldots, p_{st}(k)} = \frac{p_{st}!}{p_{st}(0)!p_{st}(1)!\cdots p_{st}(k)!}$. Hence
		\begin{equation*}\label{eq:poly_not_simple}
			|\text{Arr}(P)|=\prod_{(s,t)\in D_\Gamma} \binom{p_{st}}{p_{st}(0),\ldots, p_{st}(k)}. 
		\end{equation*}
		Also,  $p_{00}=n-\rn(\Gamma)= p_{00}(0) + [P^{\Gamma_1 \circ {\Gamma_2}}]$, thus 
		\begin{align*}
			\binom{p_{00}}{p_{00}(0),\ldots,p_{00}(k)}&=\binom{p_{00}}{p_{00}(0)}\binom{p_{00}-p_{00}(0)}{p_{00}(1),\dotsc,p_{00}(k)}\\
			&=\frac{(n-\rn(\Gamma))_{[P^{\Gamma_1\circ\Gamma_2}]}}{[P^{\Gamma_1\circ\Gamma_2}]!}\binom{p_{00}-p_{00}(0)}{p_{00}(1),\dotsc,p_{00}(k)}\\
			&=\frac{(n-\rn(\Gamma))_{[P^{\Gamma_1\circ\Gamma_2}]}}{p_{00}(1)!\cdots p_{00}(k)!}.
		\end{align*}
		So,
		\begin{displaymath}
			|\text{Arr}(P)|=\mathcal{K}_P \cdot (n-\rn(\Gamma))_{[P^{\Gamma_1\circ\Gamma_2}]},
		\end{displaymath}
		where $\mathcal{K}_{P}$ is as defined in Equation \eqref{eq:poly_gene}.
		 Thus $$|C_{\Gamma_1\Gamma_2}^{\Gamma}(n)|=\sum_{P \in \Supp_{\Gamma}^n({\Gamma_1\Gamma_2})}\mathcal{K}_P\cdot (n-\rn(\Gamma))_{[P^{\Gamma_1\circ\Gamma_2}]}.$$
	\end{proof}

One observes that $|C_{\Gamma_1\Gamma_2}^{\Gamma}(n)|$ in Equation \eqref{eq:diset} differs from the expression for $\Phi^{[\Gamma]}_{[\Gamma_{1}][\Gamma_2]}(n)$ in Equation \eqref{eq:poly_gene} only in the set over which the summation is carried out. The subsequent lemma establishes that this distinction is superfluous. 
\begin{lemma}
		\label{lem:supp}
		For $m \leqslant n$, the map
		\begin{align*}
			\beta_{m,n}:\Supp^{m}_{\Gamma}(\Gamma_1,\Gamma_2) &\to \Supp^{n}_{\Gamma}(\Gamma_1,\Gamma_2)\\
			P&\mapsto P\sqcup \{(0,0,0)^{n-m}\}\nonumber
		\end{align*}
		is an injection, where $\beta_{n,n}$ is the identity map. Moreover, for $m,n \geqslant 3k$, $\beta_{m,n}$ is surjective. 
	\end{lemma}
	\begin{proof}
		Adding extra $(0,0,0)$ paths is clearly an injective operation. For $m,n\geqslant 3k$, we prove the surjectivity of the map $\beta_{m,n}$. Note that it suffices to prove that $\beta_{3k,n}$ is surjective for $n \geqslant 3k$.  Given $P \in \Supp^{n}_{\Gamma}(\Gamma_1,\Gamma_2)$, $\rn(P) = \rn (\Gamma) + [P^{\Gamma_1 \circ \Gamma_2}]$  by Remark \ref{rm:rank P}. Since  the  weights of $\Gamma_1$ and $\Gamma_2$ are both $(k,k)$, $[P^{\Gamma_1 \circ \Gamma_2}] \leqslant k$ and $\rn(\Gamma) \leqslant 2k$.  
		Since $\rn(P) \leqslant 3k$,  $n-3k$ of the $(0,0,0)$ paths may be deleted from $P$ to give an element $\overline{P} \in \Supp^{3k}_{\Gamma}(\Gamma_1,\Gamma_2)$ such that $\beta_{3k,n}(\overline{P})=P$.
		\end{proof} 
	
	\begin{example}
		\label{ex:counter}
		For $n<3k$ the map $\beta_{n,3k}$ is not necessarily surjective. Consider $\Gamma=\Gamma_1=\Gamma_2$ as below:
		\begin{displaymath}
			\xymatrix{ 
				{} \ar @{}[d]|{}& 
			\overset{0}{\bullet} \ar@3{-}[d]|{n-2k}\ar@3{-}[drr]|(0.3){k} & & \overset{1}{\bullet} \ar@3{-}[dll]|(0.3){k}\\
				{}&  \underset{0}{\bullet} & & \underset{1}{\bullet} 
			}	
		\end{displaymath}
		Then for $n<3k$, $\Supp^{n}_{\Gamma}(\Gamma_1,\Gamma_2)= \emptyset$ but $$\Supp^{3k}_{\Gamma}(\Gamma_1,\Gamma_2)=\{(1,0,0)^k,(0,1,0)^k,(0,0,1)^k\}.$$
	\end{example}
	
\begin{proof}[\textbf{Proof of Proposition~\ref{thm:poly}}]\label{proof}
		Recall from Equation \eqref{eq:poly_gene}
		\begin{displaymath}
			\Phi^{[\Gamma]}_{[\Gamma_1][\Gamma_2]}(n)=\sum_{P \in \Supp_{\Gamma}^{3k}(\Gamma_1,  \Gamma_2)}\mathcal{K}_P \cdot (n-\rn(\Gamma))_{[P^{\Gamma_1\circ\Gamma_2}]}.
		\end{displaymath}
		This equals to
		\begin{displaymath}
			\sum_{\substack{P \in \Supp_{\Gamma}^{3k}(\Gamma_1,  \Gamma_2)\\ \rn(P)\leqslant n}}\mathcal{K}_P \cdot (n-\rn(\Gamma))_{[P^{\Gamma_1\circ\Gamma_2}]}
			\, +\sum_{\substack{P \in \Supp_{\Gamma}^{3k}(\Gamma_1,  \Gamma_2)\\ \rn(P)>n}}\mathcal{K}_P \cdot (n-\rn(\Gamma))_{[P^{\Gamma_1\circ\Gamma_2}]}.
		\end{displaymath}

		Using the arguments as in the proof of Lemma \ref{lem:supp}, observe that the sets $\Supp_{\Gamma}^n(\Gamma_1,\Gamma_2)$ and  \linebreak $\{P\in \Supp_{\Gamma}^{3k}(\Gamma_1,\Gamma_2)\mid \rn(P)\leqslant n\}$ are in bijection. Since $\mathcal{K}_P$, $[P^{\Gamma_1\circ\Gamma_2}]$ are unaffected by the addition or removal of $(0,0,0)$ paths,   $|C_{\Gamma_1\Gamma_2}^{\Gamma}(n)|$ is equal to the first sum by Lemma~\ref{th:cardC}. It remains to prove the second sum  is zero.
		
		For $n>3k$, $\{P\in \Supp_{\Gamma}^{3k}(\Gamma_1,\Gamma_2)\mid \rn(P)> n\}$ is empty by Lemma \ref{lem:supp}. For $n< 3k$, consider $P \in \Supp_{\Gamma}^{3k}(\Gamma_1,  \Gamma_2)$ with $\rn(P)>n$. By Remark \ref{rm:rank P}, $\rn(P)=\rn(\Gamma)+[P^{\Gamma_1\circ\Gamma_2}]$. Thus
		$$n<\rn(\Gamma)+[P^{\Gamma_1\circ\Gamma_2}],$$
		so $ (n-\rn(\Gamma))_{[P^{\Gamma_1\circ\Gamma_2}]}=0$ for every path $P$ that lies in $\Supp_{\Gamma}^{3k}(\Gamma_1,  \Gamma_2)$ with $\rn(P)>n$.
	\end{proof}
	
	\subsection{Schur--Weyl duality}\label{sec:tens_sym} Let $\ZZ_{\geqslant 0}$ denote the set of all non-negative integers. 
	For a fixed positive integer $n$ and a fixed $k\in\ZZ_{\geqslant 0}$, the defining action of $GL_n(F)$ on $F^n$ extends uniquely to 
	the $k$th symmetric power $\Sym^{k}(F^n)$.
	So  the subgroup $S_n$ of $GL_n(F)$ acts on $\Sym^{k}(F^n)$ by the restriction.  Let 
	\begin{displaymath}
		M(n,k)=\{\mathsf{a}:=(a_1,\ldots,a_n)\mid  \sum_{i=1}^{n} a_i=k,\,a_i\in\ZZ_{\geqslant 0}\},
	\end{displaymath}
	the set of weak 
	compositions of $k$ into $n$ parts.
	The set $$\{e^{\mathsf{a}}:=e_1^{a_1}\cdots e_n^{a_n}\in \Sym^k(F^n)\mid \mathsf{a}=(a_1,\ldots,a_n)\in M(n,k)\}$$
	is a basis of $\Sym^k(F^n)$. For $w\in S_n$ and $\mathsf{a}\in M(n,k)$ define an action of $S_n$  on $M(n,k)$ by
	$$w.\mathsf{a}=(a_{w^{-1}(1)},\ldots,a_{w^{-1}(n)}).$$ 
	
	The map $e^{\mathsf{a}} \mapsto 1_{\mathsf{a}}$, where $1_{\mathsf{a}} $ is the indicator function of $\mathsf{a}\in M(n,k)$, determines the isomorphism of $S_n$-modules:
	\begin{equation}\label{eq:sym_k}
		\Sym^{k}(F^n) \cong F[M(n,k)],
	\end{equation}
where $F[M(n,k)]$ is the permutation representation of $S_n$ (see Section \ref{sec:twist-perm-repr}).	Thus we can regard $\Sym^k(F^n)$ as a permutation representation of $S_n$. We have the following algebra isomorphism 
	\begin{equation}\label{eq:cent_alg}
		\End_{S_n}(\Sym^k(F^n)) \cong \End_{S_n}(F[M(n,k)]).
	\end{equation}
	We express an element $(\mathsf{a}, \mathsf{b})\in M(n,k)\times M(n,k)$ as the biword: 
	\begin{align}\label{al:b}
	\begin{bmatrix}
		a_1       & a_2 & \dots & a_{n} \\
		b_1       & b_2 & \dots & b_n
	\end{bmatrix}. 
\end{align}
	The action of $S_n$ on $(\mathsf{a}, \mathsf{b})$ corresponds to permuting the columns of its biword.  Let $S_n\backslash (M(n,k)\times M(n,k))$ 
	be the set of  orbits of this action. Each orbit is uniquely determined by a biword of the form \eqref{al:b}, with its columns arranged in lexicographically increasing order. Define $\tilde{\mathcal B}_{k,n}:=\{[\Gamma]\in \tilde{\mathcal{B}}_k \mid \rn ([\Gamma]) \leqslant n\}$.
	Consider the map \begin{align} \label{k-orbit-diagram bij}
		S_n\backslash (M(n,k)\times M(n,k))& \rightarrow \tilde{\mathcal B}_{k,n} \\\text{defined by}
		\begin{bmatrix}
			a_1       & a_2 & \dots & a_{n} \\
			b_1       & b_2 & \dots & b_n
		\end{bmatrix}&\mapsto [\Gamma_{\mathsf{a},\mathsf{b}}], \nonumber
	\end{align}
	where $\Gamma_{\mathsf{a},\mathsf{b}}$ is the graph with multiset of edges $\{ (a_i, {b_i})\mid i=1,2,\dots,n \}.$ 
	
	The map is well-defined because $\Gamma_{\mathsf{a},\mathsf{b}} $ is unaffected by a permutation of the columns of the biword. For $[\Gamma]\in \tilde{\mathcal B}_{k,n}$, one can choose the representative $\Gamma'$ with $n$ edges.
	The multiset of edges of $\Gamma'$ 
	uniquely determines an $S_n$-orbit of $M(n,k)\times M(n,k)$. Hence the map \eqref{k-orbit-diagram bij} is a bijection.

	Following Theorem \ref{corollary:twisted-intertwiner} we define the integral operator corresponding to an $S_n$-orbit of $M(n,k)\times M(n,k)$.
	
	\begin{definition} \label{def:integral operator}
		For each $[\Gamma] \in \tilde{\mathcal B}_{k,n}$,   define $T_{[\Gamma]} \in \End_{S_n}(F[M(n,k)])$ by 
		$$ T_{[\Gamma]} (1_{\mathsf{a}})=\sum_{\{\mathsf{b} \in M(n,k) \mid[\Gamma_{\mathsf{a},\mathsf{b}}]=[\Gamma]\}}1_{\mathsf{b}}, \quad\text{ where } \mathsf{a}, \mathsf{b}\in M(n,k).$$
	\end{definition}
	\begin{example}\label{ex:T}
		For $k=2$ and $n=3$, consider the following graph:
		
		\begin{displaymath}
			\xymatrix{
			{}\ar@{}[d]|{\Gamma\quad=} &	\overset{0}{\bullet} \ar@{-}[d]\ar@{-}[dr] & \overset{1}{\bullet}  & \overset{2}{\bullet}  \ar@{-}[dl]\\
			{} &	\underset{0}{\bullet} & \underset{1}{\bullet} & \underset{2}{\bullet}\\
			}
		\end{displaymath}
		We compute the action of the integral operator $T_{[\Gamma]}$ on some basis elements of $F[M(3,2)]$.
		
		For $\mathsf{a}=(2,0,0)$,  we must choose tuples $\mathsf{b}$ such that the pairs $(a_i,b_i)$ exhaust all the edges of the graph $\Gamma$. There are two such tuples: $(1,1,0)$ and $(1,0,1)$ which can easily be verified to yield the multiset $\{(0,0),(0,1),(2,1)\}$ of edges. So $T_{[\Gamma]}(1_{(2,0,0)})=1_{(1,1,0)} + 1_{(1,0,1)}$. Similarly, $T_{[\Gamma]}(1_{(0,2,0)})=1_{(1,1,0)} + 1_{(0,1,1)}$ and  $T_{[\Gamma]}(1_{(0,0,2)})=1_{(1,0,1)} + 1_{(0,1,1)}$.
		
	Note that $T_{[\Gamma]}(1_{\mathsf{a}})=0$ for every other element in $\mathsf{a}\in M(3,2)$ because there are no admissible tuples $\mathsf{b}$ such that $\Gamma_{\mathsf{a},\mathsf{b}}=\Gamma$.
	
	\end{example}
	
	\begin{proposition}
		\label{thm:strcu_const}
			The set $\{T_{[\Gamma]} \mid [\Gamma] \in \tilde{\mathcal B}_{k,n}  \}$ is a basis of $\End_{S_n}(F[M(n,k)])$.
		Given  $[\Gamma_1],[\Gamma_2] \in \tilde{\mathcal{B}}_{k,n}$,  the product is given by
		$$ T_{[\Gamma_1]} T_{[\Gamma_2]}=\sum_{[\Gamma] \in \tilde{ \mathcal{B}}_{k,n}} |C_{\Gamma _1 \Gamma _2}^{\Gamma}(n)|T_{[\Gamma]},
		$$
	 where $\Gamma,\Gamma_1,\Gamma_2$ are the representative of the respective classes with $n$ edges and $C_{\Gamma _1 \Gamma _2}^{\Gamma}(n)$ is defined in Definition~\ref{strc cons}.
	\end{proposition}
	\begin{proof} 
	The first assertion follows from the isomorphism~\eqref{eq:cent_alg}, the bijection \eqref{k-orbit-diagram bij}, and
	Theorem \ref{corollary:twisted-intertwiner}. 
		
		Let $(\mathsf{a}$, ${\mathsf{c}})\in M(n,k)\times M(n,k)$ such that $[\Gamma_{\mathsf{a},\mathsf{c}}]=[\Gamma]$. Then
		\begin{align*}
			T_{[\Gamma_1]} T_{[\Gamma_2]}(1_{\mathsf{a}})&=T_{[\Gamma_1]}\bigg(\sum_{\{\mathsf{b}\mid[\Gamma_{\mathsf{a},\mathsf{b}}]=[\Gamma_2]\}}1_{\mathsf{b}}\bigg)\\ & =\sum_{\{\mathsf{b}\mid[\Gamma_{\mathsf{a},\mathsf{b}}] =[\Gamma_2]\}}T_{[\Gamma_1]}(1_{\mathsf{b}})\\ &=\sum_{\{\mathsf{b}\mid[\Gamma_{\mathsf{a},\mathsf{b}}]=[\Gamma_2]\}}\sum_{\{\mathsf{z}\mid[\Gamma_{\mathsf{b},\mathsf{z}}]=[\Gamma_1]\}}1_{\mathsf{z}}.
		\end{align*}
	So the coefficient of $T_{[\Gamma]}$ in  $T_{[\Gamma_1]}T_{[\Gamma_2]}$ is the
	 cardinality of $\{{\mathsf{b}}\in M(n,k)\mid[\Gamma_{\mathsf{a},\mathsf{b}}]=[\Gamma_2] , [\Gamma_{\mathsf{b},\mathsf{c}}] =[\Gamma_1] \}$. For each $\mathsf{b}$ in this set, $\{(a_1,b_1,c_1),\dotsc,(a_n,b_n,c_n)\} \in \Supp^n_{\Gamma}(\Gamma_1,\Gamma_2)$, so $\mathsf{b} \in C^{\Gamma}_{\Gamma_1\Gamma_2}(n)$. Conversely, each element $\mathsf{b} \in C^{\Gamma}_{\Gamma_1\Gamma_2}(n)$ is in the above set. 
	\end{proof}
\textbf{Schur--Weyl duality for the multiset partition algebra.}	In the seminal paper \cite{Jones}, Jones proved the Schur--Weyl duality (Theorem~\ref{thm:SWD_partition}) between the actions of 
	$\Pa_k(n)$ and $S_n$ on $(F^n)^{\otimes k}$. In this section, we give an analog of the above scenario for the actions of $\MP_{k}(n)$ and $S_n$  on $F[M(n,k)]$. As the space $F[M(n,k)]$ is isomorphic to the $k$-th symmetric power space $\Sym^k(F^n)$,  the following theorem can be viewed as Schur--Weyl duality between $\MP_k(n)$ and $S_n$ acting on  $\Sym^{k}(F^n)$.
	\begin{theorem}\label{thm:swd_mult}
		The map 
		\begin{equation}
			\label{eq:swd}
			\phi: \MP_k(n) \to   \End_{S_n}(F[M(n,k)])  
		\end{equation}
		\[\phi([\Gamma])=
		\begin{cases}
			T_{[\Gamma]} &\text{ if $\rn[\Gamma]\leqslant n$},\\
			0 &\text{otherwise}
		\end{cases}
		\]
	 is a surjective  algebra homomorphism with the kernel
		$$\ker (\phi)=F\text{-$\spn$}\{[\Gamma]\in\tilde{\mathcal{B}}_{k}\mid \rn([\Gamma])>n\}.$$ In particular, when $n\geqslant 2k$, 
		$$\MP_{k}(n) \cong \End_{S_n}(F[M(n,k)]).$$
		Moreover, $F[S_n]$ generates $\End_{\MP_k(n)}(F[M(n,k)])$.
	\end{theorem}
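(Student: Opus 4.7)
The plan is to verify the four assertions of the theorem in order, leveraging Theorem~\ref{basis of centraliser algebra}, Theorem~\ref{thm:strcu_const}, and Theorem~\ref{thm:poly} that are now at hand. I would first show that $\phi$ is an algebra homomorphism, by checking the identity $\phi([\Gamma_1]*[\Gamma_2])=\phi([\Gamma_1])\phi([\Gamma_2])$ on basis elements. When both $\rn([\Gamma_1]),\rn([\Gamma_2])\leq n$, this reduces to matching structure constants: by Theorem~\ref{thm:strcu_const} one has $T_{[\Gamma_1]}T_{[\Gamma_2]}=\sum_{[\Gamma]\in\tilde{\mathcal{B}}_{k,n}}|C^{\Gamma}_{\Gamma_1\Gamma_2}(n)|T_{[\Gamma]}$, while by Theorem~\ref{thm:poly} these integers coincide with $\Phi^{[\Gamma]}_{[\Gamma_1][\Gamma_2]}(n)$; contributions from $[\Gamma]$ with $\rn([\Gamma])>n$ in the expansion of $[\Gamma_1]*[\Gamma_2]$ are annihilated by $\phi$ on the other side, so the two agree.

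The delicate case is when $\rn([\Gamma_1])>n$ (the case $\rn([\Gamma_2])>n$ being symmetric): here $\phi([\Gamma_1])\phi([\Gamma_2])=0$, so one must show $\Phi^{[\Gamma]}_{[\Gamma_1][\Gamma_2]}(n)=0$ for every $[\Gamma]\in\tilde{\mathcal{B}}_{k,n}$. Given a configuration $P\in\Supp^{2k}_{\Gamma}(\Gamma_1,\Gamma_2)$, I would partition the $\rn(\Gamma_1)$ indices with $(b_i,c_i)\neq(0,0)$ into two classes: those with $(a_i,c_i)\neq(0,0)$, of which there are at most $\rn(\Gamma_P)=\rn(\Gamma)$; and those with $(a_i,c_i)=(0,0)$, which combined with $(b_i,c_i)\neq(0,0)$ forces $a_i=c_i=0$ and $b_i\geq 1$, so that $i$ is counted in $[P^{\Gamma_1\circ\Gamma_2}]$. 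This dichotomy gives
\begin{displaymath}
[P^{\Gamma_1\circ\Gamma_2}]\;\geq\;\rn(\Gamma_1)-\rn(\Gamma)\;>\;n-\rn(\Gamma),
\end{displaymath}
so the falling factorial $(n-\rn(\Gamma))_{[P^{\Gamma_1\circ\Gamma_2}]}$ in Equation~\eqref{eq:poly_gene} vanishes and each summand is zero. This combinatorial vanishing is the step I expect to require the most care, since the two classes must be set up with attention to the zero/non-zero distinction on each coordinate.

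With the homomorphism property established, surjectivity is immediate from Theorem~\ref{basis of centraliser algebra}, since every basis vector $T_{[\Gamma]}$ is the image of $[\Gamma]\in\tilde{\mathcal{B}}_{k,n}\subseteq\tilde{\mathcal{B}}_k$. For the kernel, the span of $\{[\Gamma]\mid\rn([\Gamma])>n\}$ lies in $\ker(\phi)$ by definition, and on the complementary span $\phi$ carries the basis $\{[\Gamma]\mid\rn([\Gamma])\leq n\}$ bijectively onto the basis $\{T_{[\Gamma]}\}$ of the codomain, forcing equality. When $n\geq 2k$ every $[\Gamma]\in\tilde{\mathcal{B}}_k$ satisfies $\rn([\Gamma])\leq 2k\leq n$, so the kernel is zero and $\phi$ is an isomorphism.

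Finally, since $F$ has characteristic zero and $S_n$ is finite, $\Sym^k(F^n)$ is a semisimple $FS_n$-module, so the image of $FS_n$ in $\End_F(\Sym^k(F^n))$ is semisimple with centralizer $\End_{S_n}(\Sym^k(F^n))$, and equals its own double centralizer. Combining this with the surjectivity of $\phi$ yields
\begin{displaymath}
\End_{\MP_k(n)}(\Sym^k(F^n))=\End_{\phi(\MP_k(n))}(\Sym^k(F^n))=\End_{\End_{S_n}(\Sym^k(F^n))}(\Sym^k(F^n)),
\end{displaymath}
which by the double centralizer theorem coincides with the image of $FS_n$, so $S_n$ generates $\End_{\MP_k(n)}(\Sym^k(F^n))$.
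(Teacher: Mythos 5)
Your proposal is correct and follows essentially the same route as the paper: surjectivity from the basis $\{T_{[\Gamma]}\}$, the homomorphism property by matching $\Phi^{[\Gamma]}_{[\Gamma_1][\Gamma_2]}(n)$ with $|C^\Gamma_{\Gamma_1\Gamma_2}(n)|$, and the vanishing of the falling factorial when some $\rn([\Gamma_i])>n$. Your treatment of that last step via the inequality $[P^{\Gamma_1\circ\Gamma_2}]\geq \rn(\Gamma_1)-\rn(\Gamma)$ is in fact slightly more careful than the paper's claim of equality (paths of the form $(a,0,0)$ with $a\neq 0$ can make it a strict inequality), and you also supply the standard double-centralizer argument for the final assertion about $S_n$, which the paper's proof leaves implicit.
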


	\begin{proof}
		From Proposition~\ref{thm:strcu_const}, $\{T_{[\Gamma]}\mid [\Gamma]\in\tilde{\mathcal{B}}_{k,n}\}$ is a basis of $\End_{S_n}(F[M(n,k)])$, so by the definition of $\phi$ it is a surjective linear map.
		
		Let $[\Gamma_1]$ and $[\Gamma_2]\in \tilde{\mathcal{B}}_{k}$.
		If $[\Gamma_1],[\Gamma_2]\in\tilde{\mathcal{B}}_{k,n}$,
		\begin{align*}
			\phi([\Gamma_1]*[\Gamma_2])&= \phi\bigg(\sum_{[\Gamma]\in\tilde{\mathcal{B}}_k}\Phi^{[\Gamma]}_{[\Gamma_1][\Gamma_2]}(n)[\Gamma]\bigg) \text{ by Equation \eqref{eq:poly_gene}}\\
			&=\sum_{[\Gamma]\in\tilde{\mathcal{B}}_{k,n}}\Phi^{[\Gamma]}_{[\Gamma_1][\Gamma_2]}(n) T_{[\Gamma]} \text{ by Definition~\ref{def:integral operator}}\\
			&= T_{[\Gamma_1]} T_{[\Gamma_2]} \text{ from Proposition \ref{thm:strcu_const} and Proposition \ref{thm:poly}}.
		\end{align*}
		If $\rn(\Gamma_1)> n$ or 
		$\rn(\Gamma_2)> n$ and $[\Gamma]\in \tilde{\mathcal{B}}_{k}$ with $\rn(\Gamma)\leqslant n$ appears in $[\Gamma_1]*[\Gamma_2]$,
		then we claim that $(n-\rn(\Gamma))_{[P^{\Gamma_1\circ\Gamma_2}]}=0$ for all $P\in\Supp^{3k}_{\Gamma}(\Gamma_1,\Gamma_2)$. Assume without loss of generality that the $\rn(\Gamma_1)>n$. Then for any $P \in \Supp^{3k}_{\Gamma}(\Gamma_1,\Gamma_2)$,
		$n-\rn(\Gamma)=n-\rn(P) + [P^{\Gamma_1 \circ \Gamma_2}]$ and $\rn(P)\geqslant \rn(\Gamma_1)$ by Remark \ref{rm:rank P}. Therefore $n-\rn(\Gamma)< [P^{\Gamma_1 \circ \Gamma_2}]$. Thus $$\phi([\Gamma_1]*[\Gamma_2])=0=\phi([\Gamma_1])\phi([\Gamma_2]).$$
		Note that $[\Gamma]\in \tilde{\mathcal{B}}_{k}$ has rank at most $2k$, so when $n\geqslant 2k$, $\ker(\phi)$ is trivial and thus $\phi$ is an isomorphism.
	\end{proof}

	\begin{corollary}[Associativity]\label{coro:asso}
		For $[\Gamma_1],[\Gamma_2],$ and $[\Gamma_3]\in \tilde{\mathcal{B}}_{k}$, 
		\begin{displaymath}
			([\Gamma]_{1} *[\Gamma_2])*[\Gamma_3]=[\Gamma_1]*([\Gamma_2]*[\Gamma_3]).
		\end{displaymath}
	\end{corollary}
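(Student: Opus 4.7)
The plan is to leverage the Schur--Weyl duality (Theorem~\ref{thm:swd_mult}) together with the polynomial nature of the structure constants $\Phi^{[\Gamma]}_{[\Gamma_1][\Gamma_2]}(\xi)$ to deduce associativity as an identity in $F[\xi]$ from associativity at infinitely many specializations.

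First, I would expand both $([\Gamma_1]*[\Gamma_2])*[\Gamma_3]$ and $[\Gamma_1]*([\Gamma_2]*[\Gamma_3])$ using the definition in Equation~\eqref{eq:ass}. Each coefficient appearing on either side is, by construction, a polynomial in $\xi$ (a finite $F$-linear combination of products of the polynomials $\Phi^{[\Gamma']}_{[\Gamma_i][\Gamma_j]}(\xi)$). Thus for each fixed $[\Gamma] \in \tilde{\mathcal{B}}_k$, the difference between the coefficient of $[\Gamma]$ on the left and on the right is a single polynomial $D_{[\Gamma]}(\xi) \in F[\xi]$; the goal is to show $D_{[\Gamma]}(\xi) = 0$.

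Next, I would specialize $\xi$ to any integer $n \geq 2k$. By Theorem~\ref{thm:swd_mult}, the map $\phi:\MP_k(n)\to \End_{S_n}(\Sym^k(F^n))$ is an algebra isomorphism for such $n$. Since $\End_{S_n}(\Sym^k(F^n))$ is an associative algebra under composition, multiplication in $\MP_k(n)$ is associative. Transporting this through the expansions above yields $D_{[\Gamma]}(n) = 0$ for every integer $n \geq 2k$.

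Since $D_{[\Gamma]}(\xi)$ is a polynomial in $F[\xi]$ that vanishes at infinitely many integer values, it must be the zero polynomial. This holds for every $[\Gamma]$, so the two triple products agree as elements of $\MP_k(\xi)$. The only subtle point to check is that the expansions on both sides are indeed \emph{finite} sums of basis elements with polynomial coefficients, so the comparison takes place in the free $F[\xi]$-module $\MP_k(\xi)$ and is not merely a formal identity; but this is immediate from the definition of $*$ in Equation~\eqref{eq:ass}, since the sum defining $[\Gamma_i]*[\Gamma_j]$ is over the finite set $\tilde{\mathcal{B}}_k$. The main (and really the only) conceptual step is the passage from associativity at integer specializations $n \geq 2k$ to associativity as a polynomial identity, which is the standard ``generic point'' argument for algebras whose structure constants are polynomial in a parameter.
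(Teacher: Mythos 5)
Your proposal is correct and follows essentially the same route as the paper: expand both triple products, note that each coefficient is a polynomial in $\xi$, use the isomorphism $\MP_k(n)\cong\End_{S_n}(\Sym^k(F^n))$ from Theorem~\ref{thm:swd_mult} for all $n\geq 2k$ to conclude the coefficients agree at infinitely many integers, and deduce the polynomial identity since $F$ has characteristic $0$. No gaps.
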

	\begin{proof}
		For $[\Gamma]\in \tilde{\mathcal{B}}_{k}$, the coefficients of $[\Gamma]$ both in $([\Gamma]_{1} *[\Gamma_2])*[\Gamma_3]$
		and $[\Gamma_1]*([\Gamma_2]*[\Gamma_3])$ are polynomials in $\xi$. From Theorem~\ref{thm:swd_mult},  the values of 
		these  polynomials are equal at any positive integer $n\geqslant 2k$ because of the associativity of the product in $\End_{S_n}(F[M(n,k)])$. Since the characteristic of $F$ is $0$, these polynomials must be the same.
	\end{proof}

 The above corollary completes the proof of the following theorem. 
 
	\begin{theorem}\label{thm:mult_k}
		For $[\Gamma_1],[\Gamma_2]$ in $\tilde{\mathcal{B}}_k$
		we define the operation:
		\begin{equation}\label{eq:ass}
			[\Gamma_{1}]*[\Gamma_2]=
			\sum_{[\Gamma]\in \tilde{\mathcal{B}}_k} \Phi_{[\Gamma_1][\Gamma_2]}^{[\Gamma]}(\xi)[\Gamma],
		\end{equation}
		where $\Phi_{[\Gamma_1][\Gamma_2]}^{[\Gamma]}(\xi)\in F[\xi]$ is  given in Equation~\eqref{eq:poly_gene}.
		The linear extension of this operation makes $\MP_k(\xi)$ an associative unital algebra over $F[\xi]$, with the identity element $id=\sum_{[\Gamma]\in\mathcal{U}_k}[\Gamma]$.
	\end{theorem}

	\begin{remark}\label{coro:mult_irrep}
		For $n\geqslant 2k$, it follows from the centralizer theorem (\cite[Theorem~5.4]{HR}) that the irreducible representations of $\MP_k(n)$ correspond to the irreducible representations of $S_n$ appearing in $\Sym^k(F^n)$. Moreover, the multiplicity of an irreducible representation of $S_n$ occurring in $\Sym^k(F^n)$ is equal to the dimension of the corresponding irreducible representation of $\MP_k(n)$. In the next section we describe the indexing set for irreducible representations of $\MP_k(n)$. 
	\end{remark}	
	
	\section{Representation theory of the multiset partition algebra}
Given a partition $\nu=(\nu_1\geqslant \nu_2\geqslant \cdots)$ of  $n$ (denoted by $\nu \vdash n$), the \defn{Specht module} $V_\nu $ has a combinatorial basis, known as Young's seminormal basis \cite{Young}, consisting of standard Young tableaux of shape $\nu$. In this spirit, we may observe the dimension of an irreducible representation of $\MP_k(n)$ is obtained by counting certain semistandard multiset tableaux. Let $\Lambda_{k,n}$ denote the subset of partitions of $n$ that index the irreducible representations of $S_n$ occurring in $\Sym^k(F^n)$. We know from Remark \ref{coro:mult_irrep} that this set also indexes the irreducible representations of $\MP_k(n)$. In Theorem \ref{coro:mult} we describe the set $\Lambda_{k,n}$. 

	For every vector $\lambda=(\lambda_1,\lambda_2,\ldots,\lambda_s)$ of non-negative integers, consider the following representation of $S_n$: 
	\begin{equation}\label{eq:sym_lambda}
		\Sym^{\lambda}(F^n):=\Sym^{\lambda_1}(F^n)\otimes \Sym^{\lambda_2}(F^n)\otimes\cdots\otimes \Sym^{\lambda_s}(F^n).
	\end{equation}
	Let $a_{\nu}^{\lambda}$ denote the multiplicity of $V_{\nu}$ in $\Sym^{\lambda}(F^n)$. In the following proposition, we paraphrase Proposition 3.11 of \cite{Nate} to express $a_{\nu}^{\lambda}$ in the context of multiset tableaux.
	\begin{proposition}\label{nate}
		The multiplicity $a_{\nu}^{\lambda}$ is the number of semistandard multiset tableaux of shape $\nu$ and content $\{1^{\lambda_1},2^{\lambda_2},\ldots,s^{\lambda_s}\}$.
	\end{proposition}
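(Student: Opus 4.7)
The plan is to view $\Sym^{\lambda}(F^n)$ as a permutation representation of $S_n$, decompose it into orbits, and read off the multiplicity of $V_\nu$ in each orbit component via Young's rule.

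First, I would iterate the isomorphism \eqref{eq:sym_k} to obtain
\[
\Sym^{\lambda}(F^n)\cong F[M(n,\lambda_1)\times\cdots\times M(n,\lambda_s)]
\]
as $S_n$-representations, with $S_n$ acting diagonally. An element of this product is naturally encoded as an $n\times s$ nonnegative integer matrix whose $i$-th column sums to $\lambda_i$, and $S_n$ acts by permuting its rows. Reading the $j$-th row $(A_{j1},\ldots,A_{js})$ as the multiset $\{1^{A_{j1}},\ldots,s^{A_{js}}\}\in\mathcal{A}_{\{1,\ldots,s\}}$ (possibly empty), I would identify the $S_n$-orbits with multiset partitions $\pi$ of $\{1^{\lambda_1},\ldots,s^{\lambda_s}\}$ having at most $n$ nonempty parts, the $m_0(\pi):=n-\ell(\pi)$ remaining rows corresponding to empty multisets.

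Next, for each such $\pi$, letting $m_1(\pi),\ldots,m_r(\pi)$ be the multiplicities of its distinct nonempty parts, the stabilizer of a representative is the Young subgroup $S_{m_0(\pi)}\times S_{m_1(\pi)}\times\cdots\times S_{m_r(\pi)}$, so the corresponding orbit summand has Frobenius characteristic $h_{m_0(\pi)}h_{m_1(\pi)}\cdots h_{m_r(\pi)}$. Applying the Kostka expansion $h_\mu=\sum_\nu K_{\nu\mu}\,s_\nu$ and summing over all orbits gives
\[
a_{\nu}^{\lambda}=\sum_{\pi}K_{\nu,(m_0(\pi),m_1(\pi),\ldots,m_r(\pi))},
\]
with $\pi$ ranging over multiset partitions of $\{1^{\lambda_1},\ldots,s^{\lambda_s}\}$ with at most $n$ nonempty parts.

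Finally, I would match this sum with the SSMT count by stratifying SSMTs according to the multiset of their $n=|\nu|$ entries. For a fixed $\pi$ with distinct nonempty parts $M_1<_L\cdots<_L M_r$ and $m_0(\pi)$ (lex-minimal) empty cells, placing these entries into a shape-$\nu$ diagram so as to weakly increase along rows and strictly increase along columns, via the order-preserving relabeling $\emptyset,M_1,\ldots,M_r\mapsto 0,1,\ldots,r$, is equivalent to counting SSYT of shape $\nu$ with content $(m_0(\pi),m_1(\pi),\ldots,m_r(\pi))$, which is precisely $K_{\nu,(m_0(\pi),m_1(\pi),\ldots,m_r(\pi))}$. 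Summing over $\pi$ yields the total number of SSMT of shape $\nu$ and content $\{1^{\lambda_1},\ldots,s^{\lambda_s}\}$, matching the displayed formula. The one subtle point, and hence the main source of potential slip-ups, is the bookkeeping of empty multisets as a distinguished minimal entry in the graded lex order; once this is set up, the proof reduces to basic orbit counting together with Young's rule.
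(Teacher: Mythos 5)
Your argument is correct. Note first that the paper itself offers no proof of this proposition: it is stated as a paraphrase of Proposition 3.11 of \cite{Nate}, so there is no internal argument to compare against, and your write-up actually supplies the missing details. The route you take --- realizing $\Sym^{\lambda}(F^n)$ as the permutation module $F[M(n,\lambda_1)\times\cdots\times M(n,\lambda_s)]$, indexing $S_n$-orbits by multiset partitions $\pi$ of $\{1^{\lambda_1},\ldots,s^{\lambda_s}\}$ with at most $n$ nonempty parts, computing the stabilizer as the Young subgroup $S_{m_0(\pi)}\times\cdots\times S_{m_r(\pi)}$ so that the orbit summand has Frobenius characteristic $h_{m_0(\pi)}\cdots h_{m_r(\pi)}$, and then invoking Young's rule --- is the standard one, and it is essentially the argument underlying the cited result and the combinatorics in \cite{OZ,OZS}. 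The two checkpoints that could have gone wrong are both handled: you account for the $n-\ell(\pi)$ empty blocks as a distinguished entry, and you use that $\emptyset$ is the unique minimum in the graded lexicographic order (it has size $0$), so the order-preserving relabeling $\emptyset,M_1,\ldots,M_r\mapsto 0,1,\ldots,r$ identifies fillings of shape $\nu$ by these multisets with SSYT of content $(m_0(\pi),\ldots,m_r(\pi))$, whence the stratified count $\sum_{\pi}K_{\nu,(m_0(\pi),\ldots,m_r(\pi))}$ matches $|\mathrm{SSMT}(\nu,\{1^{\lambda_1},\ldots,s^{\lambda_s}\})|$ exactly. One could alternatively deduce the proposition from Theorem~\ref{lm:gen_lambda} of the paper (the plethysm $s_\nu[1+h_1+h_2+\cdots]$), but that theorem is proved downstream of this proposition, so your direct orbit-counting proof is the right choice here.
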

	
	When $\lambda=(k)$ and $n\geqslant 2k$, we see from the previous proposition and Remark \ref{coro:mult_irrep} that the dimension of the irreducible representation corresponding to a partition $\nu$ is equal to the number of semistandard multiset tableaux of shape $\nu$ and content $\{1^k\}$.

Recall that the \defn{Schur function} corresponding to $\nu$ is defined (see \cite{MR1676282}) as  
	$$s_{\nu}(x_1,x_2,\ldots )= \sum_{T }x^T,$$ 
	where the sum is over all the semistandard Young tableau (SSYT) $T$ of shape $\nu$, and   $x^T:=x_1^{c_1(T)}x_2^{c_2(T)}\dotsb$. Here        $c_i(T)$ is defined to be the number of occurrences of a positive integer $i$ in $T$. The Schur functions form a $\mathbb{Z}$-basis for the ring of symmetric functions.                                                                                   
	
	For symmetric functions $f$ and $g$, Littlewood~\cite{littlewood_1958} defined the \defn{plethysm} $f[g]$ (for a more rigorous definition see~\cite[Section I.8]{MR3443860}) as a substitution of the multiset of monomials of $g$ into the variables of $f$. The next lemma gives a generating function for $a_{\nu}^{\lambda}$ as a plethysm of symmetric functions.
	\begin{lemma}
		\label{lm:gen_lambda}
		Let $h_i=h_i(q_1,q_2,\dotsc)$ be the complete homogeneous symmetric function of degree $i$ in the variables $q_1,q_2,\ldots$. For $\lambda=(\lambda_1,\lambda_2,\ldots,\lambda_s)$, let $q^{\lambda}=  q_1^{\lambda_1}q_2^{\lambda_2}\cdots q_s^{\lambda_s}$. Then
		\begin{displaymath}
			s_{\nu}[1+h_1+h_2+\cdots]=\sum_{\lambda}a_{\nu}^{\lambda}q^{\lambda},
		\end{displaymath}
		where $\lambda$ runs over all vectors of non-negative integers.
	\end{lemma}
	\begin{proof}
		Each monomial in the Schur function $s_{\nu}(x_1,x_2,\ldots)$ corresponds to an SSYT $T$, and the degree of $x_i$ in this monomial is $c_i(T)$. 
		
		By definition of plethystic substitution, $s_{\nu}[1+h_1+h_2+\cdots]$ can be regarded as the bijective substitution of the monomials of $1+h_1+h_2+\cdots$ into the variables $x_1,x_2,\ldots$ of $s_{\nu}$.  The monomials of the expression $1+h_1+h_2+\cdots$ are indexed by multisets of finite size (in particular the monomials of $h_k$ are indexed by multisets of size $k$), and the variables of $s_{\nu}$ are indexed by positive integers. We have natural order on the positive integers and graded lexicographic order on the multisets of finite size (see Definition \ref{def:go}). There exists a unique order preserving bijection, denoted $\phi$, from the positive integers to the multisets of finite size.   
		
		Given a positive integer $i$, let $\phi(i)=\{1^{\phi(i)_{1}},2^{\phi(i)_2},\ldots\}$. Under this map $x_i \mapsto q_1^{\phi(i)_1}q_2^{\phi(i)_2}\dotsb$. Replacing the integers in a semistandard Young tableaux with their images under $\phi$ gives a semistandard multiset tableaux. We have
		$$s_{\nu}[1+h_1+h_2+\cdots]=\sum_{T} (q_1^{\phi(1)_1}q_2^{\phi(1)_2}\cdots)^{c_1(T)}(q_1^{\phi(2)_1}q_2^{\phi(2)_2}\cdots)^{c_2(T)}\cdots,$$
		where the sum is over all the semistandard Young tableau (SSYT) $T$ of shape $\nu$.    Thus $s_{\nu}[1+h_1+h_2+\cdots]$ is the generating function for the multiset semistandard tableaux of shape $\nu$ by their content. The coefficient of $q^{\lambda}$ in this function is thus the number of semistandard multiset tableaux of shape $\nu$ and content $\{1^{\lambda_1},\ldots,s^{\lambda_s}\}$ which is precisely $a_{\nu}^{\lambda}$.   
	\end{proof}
	 
	\begin{theorem}\label{coro:mult}
		Let $\Lambda_{k,n}$ be the set $\{\nu \vdash n \mid a_\nu^{(k)} \neq 0\}$. Then
	 $$\Lambda_{k,n}=\{\nu \vdash n \mid \sum_{i=1}^n (i-1)\nu_i \leqslant k \}.$$
		\end{theorem}
	\begin{proof}
		 By taking $q_1=q$ and $q_i=0$ for all $i\geqslant 2$ in Lemma \ref{lm:gen_lambda}, we get
		\begin{displaymath} 
			s_{\nu}(1,q,q^2,\ldots)=\sum_{k\geqslant 0}a_{\nu}^{(k)}q^{k}.
		\end{displaymath} 
		From \cite[Corollary 7.21.3]{MR1676282}, we have the identity: 
		\begin{equation}
			s_{\nu}(1,q,q^2,\ldots) = \frac {q^{b(\nu)}}{\prod_{u \in \nu}(1-q^{h(u)})},
		\end{equation}
		where $b(\nu) = \sum_{i=1}^n (i-1)\nu_i$ and, for a cell $u$ in the Young diagram of $\nu$, $h(u)$ is the hook-length of $u$.                                                                                                                                         
		
		Thus the multiplicity of the Specht module $V_{\nu}$ in $\Sym^{k}(F^n)$ is the coefficient of $q^k$ in $\frac{q^{b(\nu)}}{\prod_{u \in \nu}(1-q^{h(u)})}$. Each term $(1-q^{h(u)})$ in the denominator may be expanded as $1+ q^{h(u)}+(q^{h(u)})^2+\cdots$. Thus the lowest exponent of $q$ in $\frac{q^{b(\nu)}}{\prod_{u \in \nu}(1-q^{h(u)})}$ is $b(\nu)$. If $b(\nu)>k$ then the coefficient of $q^k$ is zero, and thus $\nu \notin \Lambda_{k,n}$. 
		
		Conversely if $b(\nu)\leqslant k$, pick a corner cell $u_0$ (hook-length $1$). Then $\frac{1}{1-q^{h(u_0)}}=1+q+q^2+\cdots$, and 
		\begin{align*}
			\frac{q^{b(\nu)}}{\prod_{u \in \nu}(1-q^{h(u)})}= q^{b(\nu)}(1+q+q^2+\cdots)\prod_{u \neq u_0}(1+ q^{h(u)}+(q^{h(u)})^2+\cdots).
		\end{align*}
		Choosing the term $1$ from each of the infinite sums corresponding to cells $u \neq u_0$ and the term $q^{k-b(\nu)}$ from the infinite sum corresponding to the cell $u_0$ yields an instance of $q^k$. Since the expression above is a positive sum, demonstrating a single instance is sufficient to prove the positivity of the coefficient of $q^k$. Thus $a_\nu^{(k)} \neq 0$ if and only if $b(\nu)\leqslant k$.
		\end{proof}

	\subsection*{Restriction from general linear group to symmetric group}
	The irreducible polynomial representations $W_{\lambda}$ of $GL_n(F)$ of degree $d$ are indexed by the partitions $\lambda$ of $d$ with at most $n$ parts. 
	Consider the decomposition of the restriction of $W_{\lambda}$ to $S_n$ into Specht modules:
	\begin{displaymath}
		\Res^{GL_n(F)}_{S_n} W_\lambda = \bigoplus_{\nu\vdash n} V_\nu^{\oplus r^\lambda_\nu}.
	\end{displaymath}
	A combinatorial interpretation of these multiplicities remains an open problem. By Littlewood's formula~\cite{littlewood_1958}:
	\begin{displaymath}
		r^{\lambda}_\nu=\langle s_{\nu}[1+h_1+h_2+\cdots],s_{\lambda}\rangle,
	\end{displaymath}
	where $\langle-,-\rangle$ denotes the usual inner product on the ring of symmetric functions. 
	 Lemma \ref{lm:gen_lambda} allows us to express Littlewood's identity as a generating function. This mirrors a method described in \cite[Section 4.3]{Nate} to calculate these coefficients.
	\begin{proposition}
		\label{theorem:r-gen}
		Let $h_i=h_i(q_1,\dotsc,q_n)$ be the complete symmetric functions in $n$ variables. For every partition $\lambda=(\lambda_1,\lambda_2, \ldots,\lambda_n)$ of $d$ with at most $n$ parts and every partition $\nu$ of $n$, $r^\lambda_\nu$ is the coefficient of $q^\lambda:=q_1^{\lambda_1}\ldots q_n^{\lambda_n}$ in
		\begin{displaymath}
			s_\nu[1+h_1+\cdots]\prod_{i<j}(1-\frac{q_j}{q_i}).
		\end{displaymath}
	\end{proposition}
	
	\begin{proof}
		By Lemma~\ref{lm:gen_lambda}, $\sum_\mu a^\mu_\nu q^\mu = s_\nu[1+h_1+\dotsb] $,  where $\mu$ is a  non-negative integer vector with at most $n$ parts.
		
		The Jacobi--Trudi identity can be expressed at the level of the Grothendieck ring of the category of polynomial representations of $GL_n(F)$ (see \cite{MR1194310}) as:
		\begin{equation}
			\label{eq:jt}
			W_\lambda = \det(\Sym^{\lambda_i+j-i}(F^n)),
		\end{equation}
		so
		\begin{align*}
			r^\lambda_\nu & =\sum_{w\in S_n} \sgn(w)a^{w\cdot \lambda}_\nu\\
			& = \sum_{w\in S_n} \sgn(w) [q^{w\cdot \lambda}]s_\nu[1+h_1+\cdots]\\
			& = [q^\lambda] \sum_{w\in S_n} \sgn(w) s_\nu[1+h_1+\cdots]\prod_{i=1}^n q_i^{i-w(i)}\\
			& = [q^\lambda] s_\nu[1+h_1+\cdots] \sum_{w\in S_n} \sgn(w)\prod_{i=1}^n q_i^{i-w(i)}\\
			& =  [q^\lambda] s_\nu[1+h_1+\cdots]\det(q^{i-j}_i)\\
			&=[q^\lambda] s_\nu[1+h_1+\cdots](\prod_{i=1}^{n}q_{i}^{i-1})\det(q_{i}^{1-j}),
		\end{align*}
		where $w\cdot\lambda$ is the tuple whose $i$th coordinate is $\lambda_i+w(i)-i$, and $[q^{\lambda}]f(q_1,\ldots,q_n)$ is the coefficient of $q^{\lambda}$ in the function $f(q_1,\ldots,q_n)$.
		Recognizing  $\det(q_{i}^{1-j})$ as a Vandermonde determinant that evaluates to $\prod_{i<j}(q_j^{-1}-q_i^{-1})$ and simplifying gives Proposition~\ref{theorem:r-gen}.
	\end{proof}
	
	\section{Structural properties of the multiset partition algebra}\label{sec:five}
 The goal of this section is to give an embedding of $\MP_k(\xi)$ in $\Pa_k(\xi)$. Using this embedding and the cellularity of $\Pa_k(\xi)$, we prove that $\MP_k(\xi)$ is cellular. 

	\subsection{An embedding of $\MP_{k}(\xi)$ into $\Pa_{k}(\xi)$}
\label{subsec:embed}  

Let $d=\{B_1,\dotsc,B_s\}$ be a partition diagram in $\mathcal{A}_k$. Recall from Definition \ref{eg:par_alg_diag} that $B_i^u$ and $B_i^l$ are the primed and unprimed numbers in $B_i$ respectively, for all $i=1,\dotsc,s$.
 Let $\mathsf{G}_d$ be the multigraph in $\mathcal{B}_{k}$ with multiset of edges
 \begin{displaymath}
 E_\Gamma=\{(|B_i^u|,|B_i^l|)\mid 1\leqslant i \leqslant s\}.
 \end{displaymath}
For example, the diagram $d=\{\{1,2,1',3'\},\{3,5,4'\},\{4,2',5'\}\}$ in Example \ref{eg:block} corresponds to the graph $\mathsf{G}_d$ with multiset of edges $\{(2,2),(2,1),(1,2)\}$. 

Given $[\Gamma] \in \tilde{\mathcal{B}}_{k}$, define the set $O_\Gamma$ of {partition diagrams associated with $[\Gamma]$} as
\begin{displaymath}
O_\Gamma= \{d \in \mathcal{A}_k \mid  [\mathsf{G}_d]= [\Gamma]\}.
\end{displaymath}
We now define the canonical partition diagram associated to $[\Gamma]$. Pick a representative of $[\Gamma]$ with non-zero edges $ \{(a_1,b_1),\dotsc, (a_s,b_s)\}$ (the edges have been numbered in lexicographically increasing order), and define the following subset of $\{1,\dotsc,k,1',\dotsc,k'\}$:
	\begin{equation}
		\label{eq:blocks}
		B_{i}=\left\{\big(\sum_{r=1}^{i-1}a_{r}\big)+1,\ldots,\sum_{r=1}^{i}a_{r},\big(\big(\sum_{r=1}^{i-1}b_{r}\big)+1\big)',
		\ldots,\big(\sum_{r=1}^{i}b_{r}\big)'\right\}.
	\end{equation}
	By convention, if $a_i=0$ (or $b_i=0$), then the corresponding enumeration of terms in $B_i$ is empty. Define the \defn{canonical partition diagram} $d_{\Gamma}$ associated to $[\Gamma]$ as           
	\begin{equation}
		\label{eq:part_connect}
		d_{\Gamma}=\{B_{1},\ldots,B_{s}\}.
	\end{equation}
	
Note that $d_\Gamma \in O_\Gamma$, and is independent of choice of representatives of $[\Gamma]\in\tilde{\mathcal{B}}_k$.

\textbf{Action of $S_k \times S_k$.} For  $(\sigma,\tau)\in S_{k}\times S_{k}$ and $d\in \mathcal{A}_k$, the permutations $\sigma$ and $\tau$ act by substitution on the primed and unprimed elements of $d$ respectively. Observe that $O_{\Gamma}$ is the $S_{k}\times S_{k}$-orbit of $d_{\Gamma}$. 
Let $(S_k\times S_k)\setminus \mathcal{A}_k$ denote the set of orbits of $S_k\times S_k$ on $\mathcal{A}_k$. 
		The following map is a bijection 
		\begin{align}\label{eq:bij}
			\psi:\tilde{\mathcal{B}}_k&\to (S_{k}\times S_{k})\setminus \mathcal{A}_k\\
			\psi([\Gamma])&=O_{\Gamma}.\nonumber
		\end{align}
	
	\begin{example}\label{ex:genericlabel}
		Let $\Gamma$ be the following graph:

			\begin{displaymath}
			\xymatrix{
				{}\ar@{}[d]|{\Gamma\quad=} &	\overset{0}{\bullet} \ar@{-}[d]\ar@{-}[dr] & \overset{1}{\bullet}  & \overset{2}{\bullet}  \ar@{-}[dl]\\
				{} &	\underset{0}{\bullet} & \underset{1}{\bullet} & \underset{2}{\bullet}\\
			}
		\end{displaymath}
		
		Then the set of edges of $\Gamma$ is $\{(0,0),(0,1),(2,1)\}$ in lexicographically increasing order. We ignore the $(0,0)$ edge in determining the blocks of $d_\Gamma$. By Equation~\eqref{eq:blocks}, the blocks are $B_{1}=\{1'\}$, $B_{2}=\{1,2,2'\}$, and so $d_{\Gamma}$ is as follows: 
		\begin{center}
			\begin{tikzpicture}[scale=0.9,mycirc/.style={circle,fill=black, minimum size=0.1mm, inner sep = 1.5pt}]
				\node[mycirc,label=above:{$1$}] (n1) at (0,1) {};
				\node[mycirc,label=above:{$2$}] (n2) at (1,1) {};
				\node[mycirc,label=below:{$1'$}] (n1') at (0,0) {};
				\node[mycirc,label=below:{$2'$}] (n2') at (1,0) {};
				\draw (n1)..controls(0.5,0.5)..(n2);
				\draw (n2)-- (n2');
			\end{tikzpicture}
		\end{center}

		Under the bijection \eqref{eq:bij}, $[\Gamma]$ is mapped to the set $O_\Gamma$, the orbit of $d_\Gamma$, consisting of the following partition diagrams:
		\begin{center}
			\begin{tikzpicture}[scale=0.9,mycirc/.style={circle,fill=black, minimum size=0.1mm, inner sep = 1.5pt}]
				\node[mycirc,label=above:{$1$}] (n1) at (0,1) {};
				\node[mycirc,label=above:{$2$}] (n2) at (1,1) {};
				\node[mycirc,label=below:{$1'$}] (n1') at (0,0) {};
				\node[mycirc,label=below:{$2'$}] (n2') at (1,0) {};
				\draw (n1)..controls(0.5,0.5)..(n2);
				\draw (n2)-- (n2');
				\node ( ) at (2,0.5) {and};
				\node[mycirc,label=above:{$1$}] (m1) at (3,1) {};
				\node[mycirc,label=above:{$2$}] (m2) at (4,1) {};
				\node[mycirc,label=below:{$1'$}] (m1') at (3,0) {};
				\node[mycirc,label=below:{$2'$}] (m2') at (4,0) {};
				\draw (m1)..controls(3.5,.7)..(m2);
				\draw (m2)-- (m1');
			\end{tikzpicture} 
		\end{center}
	\end{example}

	\begin{definition}\label{def:alpha}           
		Given $d\in \mathcal{A}_k$, let $d^l=\{B_1^{l},\dots,B_s^{l}\}$ (for the definition of $d^l$ see Definition~\ref{eg:par_alg_diag}), and $|B_i^l|=b_i$ for $i=1,\dotsc,s$. Define $\alpha_d$ to be the cardinality of $S_k$-orbit of $d^{l}$
		\begin{displaymath}
			\alpha_{d}=\frac{k!}{b_1!\cdots b_s!}.
		\end{displaymath} 
	\end{definition}
	
	Now we give an embedding of $\MP_k(\xi)$ inside $\Pa_k(\xi)$.
	\begin{theorem}\label{cor:new_lambda}
		The following map is an injective algebra homomorphism
		\begin{align}\label{al:eta-k}
			\tilde{\eta}_{k}:\MP_{k}(\xi)&\to \Pa_k(\xi)\\
			\tilde{\eta}_{k}([\Gamma])&=\frac{1}{\alpha_{d_\Gamma}}\sum_{d\in O_{\Gamma}} x_{d}\nonumber,
		\end{align}
		where $\{x_d\}$ is the orbit basis of $\Pa_k(\xi)$ as defined in Equation~\eqref{eq:xd}.
	\end{theorem}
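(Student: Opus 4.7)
The map $\tilde\eta_k$ is well-defined and linear by construction, since the canonical diagram $d_\Gamma$ and hence its $S_k\times S_k$-orbit $O_{d_\Gamma}$ depend only on the class $[\Gamma]\in\tilde{\mathcal B}_k$, not on a chosen representative graph. Injectivity is immediate from Theorem~\ref{thm:bij}: distinct classes $[\Gamma]$ produce distinct orbits $O_{d_\Gamma}\subset A_k$, so the orbit sums $\sum_{d\in O_{d_\Gamma}}x_d$ have support in disjoint subsets of the $F[\xi]$-basis $\{x_d\}$ of $\Pa_k(\xi)$ and are therefore linearly independent.

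The substantive content is multiplicativity, and the plan is to reduce it to an identity of polynomials in $\xi$ that can be verified after specialization. Expanded in the orbit basis of $\Pa_k(\xi)$, both $\tilde\eta_k([\Gamma_1]*[\Gamma_2])$ and $\tilde\eta_k([\Gamma_1])\tilde\eta_k([\Gamma_2])$ are $F[\xi]$-linear combinations of the $x_d$ whose coefficients are polynomials in $\xi$: the first via the structure constants $\Phi^{[\Gamma]}_{[\Gamma_1][\Gamma_2]}(\xi)$ of Equation~\eqref{eq:poly_gene}, the second via the falling-factorial structure constants of $\Pa_k(\xi)$ in its orbit basis recalled earlier in the paper. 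Since $F$ has characteristic $0$, it suffices to verify the identity whenever $\xi$ is specialized to an integer $n\geq 2k$.

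For such $n$, Theorems~\ref{thm:SWD_partition} and~\ref{thm:swd_mult} give algebra isomorphisms $\phi_k\colon\Pa_k(n)\xrightarrow{\sim}\End_{S_n}((F^n)^{\otimes k})$ and $\phi\colon\MP_k(n)\xrightarrow{\sim}\End_{S_n}(\Sym^k(F^n))$. Let $p\colon(F^n)^{\otimes k}\twoheadrightarrow\Sym^k(F^n)$ be the canonical product map and $\iota\colon\Sym^k(F^n)\hookrightarrow(F^n)^{\otimes k}$ the natural symmetrization, normalized so that $p\iota=\mathrm{id}_{\Sym^k(F^n)}$; both are $S_n$-equivariant. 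Then $\eta\colon T\mapsto \iota\circ T\circ p$ is a (non-unital) algebra homomorphism $\End_{S_n}(\Sym^k(F^n))\to\End_{S_n}((F^n)^{\otimes k})$, because $\eta(T_1)\eta(T_2)=\iota T_1(p\iota)T_2 p=\iota(T_1T_2)p=\eta(T_1T_2)$. I would then prove the commutativity
\begin{displaymath}
\phi_k\circ\tilde\eta_k=\eta\circ\phi
\end{displaymath}
on each basis element $[\Gamma]$: Equation~\eqref{eq:part map} gives $\phi_k(x_d)$ explicitly on pure tensors, and one checks that averaging over $d\in O_{d_\Gamma}$ with weight $1/\alpha_{d_\Gamma}$ reproduces, on the monomial basis $\{e^{\underline a}\}$ of $\Sym^k(F^n)\cong F[M(n,k)]$, the integral operator $T_{[\Gamma]}$ of Definition~\ref{integral operator} conjugated by $\iota$ and $p$. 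Granting this, $\tilde\eta_k=\phi_k^{-1}\circ\eta\circ\phi$ at every $\xi=n\geq 2k$, so multiplicativity holds at infinitely many integers and hence as an identity in $F[\xi]$.

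The principal obstacle is the diagram chase in the previous paragraph. The combinatorial bookkeeping has three pieces that must be matched simultaneously: (i) the multinomial constant $\alpha_{d_\Gamma}=k!/(b_1!\cdots b_n!)$ of Definition~\ref{def:alpha}, which equals the number of pure tensors $e_{i_1}\otimes\cdots\otimes e_{i_k}$ in an $S_k$-orbit representing a fixed monomial $e^{\underline a}$; (ii) the size and stabilizer of the orbit $O_{d_\Gamma}$ under the $S_k\times S_k$-action on $A_k$ used to symmetrize $x_{d_\Gamma}$; and (iii) the description of $T_{[\Gamma]}$ in terms of $S_n$-orbits on $M(n,k)\times M(n,k)$ via the bijection~\eqref{k-orbit-diagram bij}. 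Once these three normalizations are shown to cancel against each other on basis elements, the square commutes, and together with injectivity this establishes that $\tilde\eta_k$ is an injective algebra homomorphism.
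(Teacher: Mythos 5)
Your proposal takes essentially the same route as the paper's own proof: the paper also reduces multiplicativity to a polynomial identity verified at integer specializations $n\geq 2k$, introduces the (non-unital) homomorphism $\eta_k(f)=\tau_k\circ f\circ\pi_k$ built from the projection and symmetrization maps, and establishes the commuting square $\phi_k\circ\tilde\eta_k=\eta_k\circ\phi$ by checking $\eta_k(T_{[\Gamma]})=\frac{1}{\alpha_{d_\Gamma}}\sum_{d\in O_{d_\Gamma}}x_d$ on basis elements. The only caveat is that this last verification --- which you correctly identify as the principal obstacle and whose three normalizations you correctly enumerate --- is the bulk of the paper's argument, carried out there by an explicit computation of both sides on indicator functions $1_A$ of ordered set partitions, split into the cases $\set((|A_1|,\ldots,|A_n|))\neq\set(\underline a)$ and $=\set(\underline a)$; your proposal asserts rather than executes it.
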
	
\textbf{Reduction of proof.} The injectivity of the map is clear using the bijection~\eqref{eq:bij}. 
		The structure constants for both $\MP_{k}(\xi)$ and $\Pa_k(\xi)$ are polynomials in $\xi$. Therefore it is sufficient to prove $\tilde{\eta}_{k}$ is an algebra homomorphism when 
		$\xi$ is evaluated at any positive integer $n$ such that $n\geqslant 2k$. For this, consider the map
		\begin{align*}
			\eta_{k}:\End_{S_n}(F[M(n,k)])) &\to \End_{S_n}(F[A(n,k)]) \\
			\eta_k(T_{[\Gamma]})&= \tau_k\circ T_{[\Gamma]}\circ\pi_k.
		\end{align*}
	Here recall that $\{T_{[\Gamma]}\mid [\Gamma]\in \tilde{\mathcal{B}}_{k,n}\}$ is a basis of $\End_{S_n}(F[M(n,k)])$, and we will define in Definition \ref{def:pro_sec}  the canonical projection map $\pi_k$ and   the canonical section map $\tau_k$ of $\pi_k$. Since  $\pi_k\circ\tau_k=id_{F[M(n,k)]}$, it follows that the map $\eta_k$ is an embedding of algebras. Now consider the following diagram
		\begin{displaymath}
			\xymatrix{
				\MP_{k}(n) \ar@{->>}[d]\ar@{^{(}->}[r]^{\tilde{\eta}_{k}} & \Pa_{k}(n)\ar@{->>}[d]\\
				\End_{S_n}(F[M(n,k)]) \ar@{^{(}->}[r]^{\eta_{k}} & \End_{S_n}(F[A(n,k)]),
			}
		\end{displaymath}                                                                                                                  
	where the leftmost and the rightmost vertical maps are as in Theorem~\ref{thm:swd_mult} and Theorem~\ref{thm:SWD_partition} respectively.  These vertical maps are algebra isomorphisms for $n\geqslant 2k$.  Therefore in order to show $\tilde{\eta}_k$ is a homomorphism of algebras, it is enough to show that the above diagram commutes
		\begin{equation}\label{eq:eta-lambda}
			\eta_{k}(T_{[\Gamma]})=\frac{1}{\alpha_{d_\Gamma}}\sum_{d\in O_{\Gamma}} x_{d},  \text{ for } [\Gamma]\in\tilde{\mathcal B}_{k,n}.      
		\end{equation}
This equation is the final reduction for the proof of the theorem. To prove it we need following definitions and notation.

	\begin{definition}
		\label{def:alpha_1} 
		For $\mathsf{a}=(a_1,\ldots,a_n)\in M(n,k)$, define  
		\begin{displaymath}
			D(\mathsf{a})=\{(A_1,\ldots, A_n) \mid \sqcup_{i=1}^{n}A_i =\{1,\ldots,k\},\, |A_i|=a_i\}.
		\end{displaymath}
In words,	the set $D(\mathsf{a})$ consists of ordered set partitions in $\mathcal{A}(n,k)$ whose $i$-th part has the cardinality equal to $a_i$.
For example if $\mathsf{a}=(1,1,0)\in M(3,2)$ then $D(\mathsf{a})= \{(\{1\},\{2\},\emptyset),(\{2\},\{1\},\emptyset)\}$. 
	\end{definition}

\begin{definition}
\label{def:pro_sec}
	The \defn{canonical projection map} $\pi_k:F[\mathcal{A}(n,k)]\to F[M(n,k)]$ and the \defn{canonical section map} $\tau_k:F[M(n,k)]\to F[\mathcal{A}(n,k)]$ are defined as
	\begin{align*}
		\pi_k(1_{(A_1,\ldots,A_n)})&= 1_{(|A_1|,\ldots, |A_n|)},\\
		\tau_k(1_{\mathsf{a}})&= \frac1{|D(\mathsf{a})|} \sum_{S \in D(\mathsf{a})}1_S.  
	\end{align*}   
\end{definition}                                                                                                                                  
For $S\in D(\mathsf{a})$, one has $\pi_k(1_{S})=1_{\mathsf{a}}$  and hence $\pi_k\circ \tau_k=id_{F[M(n,k)]}$.    
\begin{remark}
\label{rmk:classical maps}
Under the isomorphisms $\Sym^k(F^n) \cong F[M(n,k)]$ and $(F^n)^{\otimes k}\cong F[\mathcal{A}(n,k)]$, the map $\pi_k$  is the well-known projection map from $(F^n)^{\otimes k}$ to $\Sym^{k}(F^n)$ and $\tau_k$ is the corresponding well-known section map.
\end{remark}

The following example illustrates the idea of proof of Equation~\eqref{eq:diag_operator}. 
\begin{example}
Let $\Gamma$ be the graph as in	Example~\ref{ex:T}.   We have $k=2$ and $n=3$. 
We check the validity of Equation~\eqref{eq:eta-lambda}  on the basis of $F[\mathcal{A}(3,2)]$. 

 The left hand side of Equation~\eqref{eq:eta-lambda} at  the basis element $1_{(\{1,2\},\emptyset,\emptyset)}$ is given as follows
\begin{align*}
	\big(\tau_2\circ T_{[\Gamma]}\circ \pi_2\big)(1_{(\{1,2\},\emptyset,\emptyset)})
&=\tau_2\circ T_{[\Gamma]}(1_{(2,0,0)})\\
&=\tau_2(1_{(1,1,0)}+1_{(1,0,1)}), \text{ by Example~\ref{ex:T}}\\
&=\frac{1}{2}(1_{(\{1\},\{2\},\emptyset)}+ 1_{(\{2\},\{1\},\emptyset)}+1_{(\{1\},\emptyset,\{2\})}+1_{(\{2\},\emptyset,\{1\})}).
\end{align*}
Now we compute the right hand side of Equation~\eqref{eq:eta-lambda} at $1_{(\{1,2\},\emptyset,\emptyset)}$.  The set $O_{\Gamma}$, as given in Example~\ref{ex:genericlabel}, consists of two partition diagrams  $d_1=d_{\Gamma}=\{\{1,2,2'\},\{1'\},\emptyset\}$ and $d_2=\{\{1,2,1'\},\{2'\},\emptyset\}$. 
By Equation~\eqref{eq:diag_operator}, we have
\begin{align*}
x_{d_1}(1_{(\{1,2\},\emptyset,\emptyset)})&= 1_{(\{1\},\{2\},\emptyset)}	+ 1_{(\{1\},\emptyset, \{2\})}\\
x_{d_2}(1_{(\{1,2\},\emptyset,\emptyset)})&= 1_{(\{2\},\{1\},\emptyset)}	+ 1_{(\{2\},\emptyset, \{1\})}.
\end{align*}
Since $\alpha_{d_\Gamma}=2$,  both sides of Equation~\eqref{eq:eta-lambda} agree  at $1_{(\{1,2\},\emptyset,\emptyset)}$.

Computations for the elements $1_{(\emptyset, \{1,2\},\emptyset)}$ and $1_{(\emptyset, \emptyset, \{1,2\})}$  are similar. For every other basis element the left hand side of Equation~\eqref{eq:eta-lambda} vanishes as shown in Example~\ref{ex:T} and the right hand side of Equation~\eqref{eq:eta-lambda} vanishes due to  Equation~\eqref{eq:diag_operator}.
\end{example}

Now we are ready to prove Equation~\eqref{eq:eta-lambda} which completes the proof of Theorem~\ref{cor:new_lambda}.
	\begin{proof}\label{proof:reduction}
Let $\Gamma\in\mathcal{B}_{k}$ be a graph with its edges $(a_1,b_1),\dotsc,(a_n,b_n)$ arranged in lexicographically increasing order.
 Denote  the elements $(a_1,\ldots,a_n)$ and $(b_1,\ldots,b_n)$ in $M(n,k)$  by $ \mathsf{a}$ and $ \mathsf{b}$ respectively. 
 Let $A=(A_1,\ldots, A_n)\in \mathcal{A}(n,k)$ be an ordered set partition  of $\{1,\ldots,k\}$. 
 
 		The left hand side of Equation~\eqref{eq:eta-lambda} is
		\begin{align}\label{eq:rhs}
			\tau_k\circ T_{[\Gamma]} \circ \pi_k(1_A)
			&= \tau_k\big(T_{[\Gamma]}(1_{(|A_1|,\ldots,|A_n|)})\big), \text{ by Definition \ref{def:pro_sec}}\nonumber\\
			& =\tau_k\bigg(\sum_{\{\mathsf{c}\mid [\Gamma_{(|A_1|,\ldots,|A_n|),\mathsf{c}}]=[\Gamma]\}}1_{\mathsf{c}}\bigg), \text{ by Definition \ref{def:integral operator}}
		\end{align}
We have the following two cases.

		{\bf Case 1:} Assume $\set\big((|A_1|,\ldots,|A_n|)\big)\neq \set(\mathsf{a})$.  Then for every $\mathsf{c}\in M(n,k)$ we have
		\begin{displaymath}
			[\Gamma_{(|A_1|,\ldots,|A_n|),\mathsf{c}}]\neq [\Gamma].
		\end{displaymath}
		So  from Equation \eqref{eq:rhs} $\tau_k\circ T_{[\Gamma]}\circ \pi_k(1_{A})=0$. 
		For $d\in O_{\Gamma}$, the set of cardinalities of parts of $d^{u}$ is $\set(\mathsf{a})$ and therefore $d^{u}\neq \set(A)$. So by Equation~\eqref{eq:diag_operator}, the right hand side of Equation \eqref{eq:eta-lambda} is 
		$$\frac{1}{\alpha_{d_\Gamma}}\sum_{d\in O_{\Gamma}}x_d(1_A)=0.$$                                                                                                                                                    
		
	{\bf Case 2: }	Assume $\set\big((|A_1|,\ldots,|A_n|)\big)=\set(\mathsf{a})$.    Following Equation~\eqref{eq:rhs}, we have
			\begin{align*}
		\tau_k\bigg(\sum_{\{\mathsf{c}\mid [\Gamma_{(|A_1|,\ldots,|A_n|),\mathsf{c}}]=[\Gamma]\}}1_{\mathsf{c}}\bigg)&=\sum_{\{\mathsf{c}\mid [\Gamma_{(|A_1|,\ldots,|A_n|),\mathsf{c}}]=[\Gamma]\}} \frac{1}{|D(\mathsf{c})|}\sum_{S\in D(\mathsf{c})}1_S, \text{ by Definition \ref{def:pro_sec}}\nonumber\\
		&=\frac{1}{\alpha_{d_\Gamma}}\sum_{\{\mathsf{c}\mid [\Gamma_{(|A_1|,\ldots,|A_n|),\mathsf{c}}]=[\Gamma]\}} \sum_{S\in D(\mathsf{c})}1_S.
		\end{align*}
		For the last equality recall that for $\Gamma=\Gamma_{(|A_1|,\ldots,|A_n|),\mathsf{c}}$, we have $\alpha_{d_{\Gamma}}=|D(\mathsf{c})|$.        
		                                                                                                                                            
	For $d\in O_{\Gamma}$ and $d^u\neq \set(A)$ then from Equation \eqref{eq:diag_operator},
		$x_{d}(1_{A})=0.$ 	Thus we may restrict the sum in  the right hand side of Equation~\eqref{eq:eta-lambda}
		\begin{align*}
			\frac{1}{\alpha_{d_\Gamma}}\sum_{d \in O_{\Gamma}}x_d(1_A)&=
			\frac{1}{\alpha_{d_\Gamma}} \sum_{\substack{d\in O_{\Gamma} \\d^{u}=\set(A)}} \sum_{\substack{S=(S_1,\dots,S_n) \\ \{(A_1,S_1),\ldots,(A_n,S_n)\}=d} } 1_{S} \text{,\quad\quad by Equation~\eqref{eq:diag_operator}}\\
			&=\frac{1}{\alpha_{d_\Gamma}}  \sum_{\substack{S=(S_1,\dots,S_n) \\ \Gamma_{(|A_1|,\ldots, |A_n| ),(|S_1|,\ldots,|S_n|)}=\Gamma} } 1_{S} .
		\end{align*}
For the second equality, observe that the subset  $\{d\in O_{\Gamma} \mid d^{u}=\set(A)\}$ comprises exactly diagrams of the form $d=\{(A_1,S_1),\dotsc,(A_n,S_n)\}$ with $\Gamma_{\{(|A_1|,\dotsc,|A_n|),(|S_1|,\dotsc,|S_n|)\}}=\Gamma$. 

We may rearrange the order of the summations to first fix the cardinalities $\mathsf{c}=(c_1,\dotsc,c_n)$ such that $\Gamma_{\{(|A_1|,\dotsc,|A_n|),(c_1,\dotsc,c_n)\}}=\Gamma$, and then range over all ordered set partitions $S=(S_1,\dotsc,S_n)$ such that $|S_i|=c_i$ (for $i=1,\dotsc,n$):
		\begin{align}\label{eq:LHS}
							\frac{1}{\alpha_{d_\Gamma}}\sum_{d \in O_{\Gamma}}x_d(1_A)&=\frac{1}{\alpha_{d_\Gamma}} \sum_{\{\mathsf {c}| [\Gamma_{(|A_1|,\ldots,|A_n|),\mathsf{c}}]=[\Gamma]\}} \sum_{S \in D(\mathsf {c})}1_{S}.
		\end{align}
Comparing Equation \eqref{eq:LHS} and Equation \eqref{eq:rhs} we see that Equation \eqref{eq:eta-lambda} is true. 	\end{proof}

 \subsection{Cellularity of $\MP_k(\xi)$} 
	The notion of  cellular algebras was introduced in \cite{GL} by giving certain basis satisfying very specific properties analogous to  the Kazhdan--Lusztig basis of Hecke algebras. Later, an equivalent basis-free definition \cite[Definition 3.2]{KX} was given, which we recall in Definition \ref{def:cell}. We note that our proof of cellularity of $\MP_{k}(\xi)$ uses only Proposition \ref{prop:idem}.
	
	\begin{definition}\label{def:cell}
		Let $A$ be algebra over $F$. Suppose $\mathfrak{i}$ is an anti-involution on $A$. A two-sided ideal $J$ in $A$ is called a cell ideal if and only  if $\mathfrak{i}(J)=J$
		and there exists a left ideal $\Delta \subseteq J$ such that $\Delta$ is finitely generated  over $F$ and there is an isomorphism of $A$-bimodules such that $\alpha:J\cong \Delta\otimes_{F}\mathfrak{i}(\Delta)$  making the following diagram commutative:
		\begin{displaymath}
			\xymatrix{ 
				J\ar@{->}[d]^{\mathfrak{i}}\ar@{->}[r]^-{\alpha} & \Delta\otimes_{F}\mathfrak{i}(\Delta) \ar@{->}[d]^{x\otimes y\mapsto \mathfrak{i}(y)\otimes \mathfrak{i}(x)} \\
				J \ar@{->}[r]^-{\alpha} &\Delta\otimes_{F}\mathfrak{i}(\Delta)
			} 
		\end{displaymath}
		
		The algebra $A$, with respect to the anti-involution $\mathfrak{i}$, is called \defn{cellular} if and only if 
		\begin{itemize}
	\item	there is a finite chain 
		of two sided ideals of $A: 0=J_{0}\subseteq J_{1}\subseteq \cdots \subseteq J_n=A$, each of them are fixed by $\mathfrak{i}$,
		\item for each $1\leqslant l\leqslant n$, the quotient $J_{l}/J_{l-1}$ is a cell ideal of $A/J_{l-1}$ with respect to the anti-involution induced by $\mathfrak{i}$ on the quotient $A/J_{l-1}$.
		\end{itemize}
	\end{definition}
	In the definition of a cellular algebra $A$, cell ideals of $A$ lead to some special modules $\Delta$ called as cell modules (analogous to Specht modules in the case of symmetric group).  Further, cell modules enable one to construct all simple modules of $A$ (for detail see \cite{GL},\cite{KX}).  The following proposition~\cite[Proposition 4.3]{KX} allows one to realize a cellular subalgebra of $A$ with respect to a choice of  an idempotent in $A$.  
	
	\begin{proposition}\label{prop:idem}
		Let $A$ be a cellular algebra with respect to an anti-involution $\mathfrak{i}$. Let $e\in A$ be an idempotent such that $\mathfrak{i}(e)=e$.  
		Then the subalgebra $eAe$ is also cellular  with respect to the anti-involution $\mathfrak{i}$ restricted to $eAe$. 
	\end{proposition}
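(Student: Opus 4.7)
The plan is to promote the cellular basis of $A$ to a cellular basis of $eAe$ by sandwiching each basis element between two copies of $e$ and extracting a maximal linearly independent subfamily. Let the cell datum of $A$ be $(\Lambda, M, C, i)$, and for $\lambda\in\Lambda$ denote by $A^{\lambda}$ (resp.\ $A^{<\lambda}$) the two-sided ideal $F$-spanned by $\{C_{S,T}^{\mu}:\mu\geq \lambda\}$ (resp.\ $\mu>\lambda$). Because $i(e)=e$, each $eA^{\lambda}e$ and $eA^{<\lambda}e$ is stable under $i$, and the cell filtration of $A$ restricts to a filtration of $eAe$ by the $eA^{\lambda}e$; it therefore suffices to exhibit compatible cellular data on each successive quotient.

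The main structural input is the standard bimodule identification $A^{\lambda}/A^{<\lambda}\cong W(\lambda)\otimes_F W(\lambda)$ coming from the cellular axiom, where $W(\lambda)$ is the cell module with $F$-basis $\{C_S^{\lambda}: S\in M(\lambda)\}$, the left $A$-action is on the first tensor factor, and the right $A$-action on the second factor is pulled back through $i$. Sandwiching by $e$ and using $i(e)=e$ to identify $W(\lambda)e\cong eW(\lambda)$ via $i$ yields
\[ eA^{\lambda}e / eA^{<\lambda}e \;\cong\; eW(\lambda)\otimes_F eW(\lambda), \]
under which $eC_{S,T}^{\lambda}e$ corresponds to $eC_S^{\lambda}\otimes eC_T^{\lambda}$. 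Set $\Lambda_e=\{\lambda\in\Lambda: eW(\lambda)\neq 0\}$. For each $\lambda\in\Lambda_e$ the family $\{eC_S^{\lambda}:S\in M(\lambda)\}$ spans $eW(\lambda)$, so I would choose a subset $M_e(\lambda)\subseteq M(\lambda)$ for which $\{eC_S^{\lambda}:S\in M_e(\lambda)\}$ is an $F$-basis of $eW(\lambda)$. The resulting family $\{eC_{S,T}^{\lambda}e:\lambda\in\Lambda_e,\,S,T\in M_e(\lambda)\}$ is the candidate cellular basis of $eAe$, indexed by the cell datum $(\Lambda_e, M_e, -, i|_{eAe})$.

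The three cellular axioms then fall out cleanly. Linear independence and spanning follow by induction through the filtration, since in each quotient $eA^{\lambda}e/eA^{<\lambda}e$ the images $\{eC_S^{\lambda}\otimes eC_T^{\lambda}:S,T\in M_e(\lambda)\}$ form a basis by construction. The involution axiom is immediate: $i(eC_{S,T}^{\lambda}e)=eC_{T,S}^{\lambda}e$, merely swapping indices inside the family. The multiplication axiom reads off the bimodule identification, since for $a\in eAe$ one has $a\cdot(eC_S^{\lambda}\otimes eC_T^{\lambda})=(a\cdot eC_S^{\lambda})\otimes eC_T^{\lambda}$, and the coefficients of $a\cdot eC_S^{\lambda}$ in the chosen basis of $eW(\lambda)$ depend only on $a$ and the first indices. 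The principal subtlety, and the main obstacle, is the bimodule identification itself: it is precisely the hypothesis $i(e)=e$ that forces the right action of $e$ to mirror the left action on $W(\lambda)$, so that a single choice of $M_e(\lambda)$ serves for both tensor factors and the resulting family is preserved setwise by $i$.
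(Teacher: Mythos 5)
The paper does not prove this proposition at all: it is quoted verbatim from K\"onig and Xi \cite[Proposition 4.3]{KX} and used as a black box, so there is no in-paper argument to compare against. Your proof is correct and is essentially the standard basis-level argument for this fact. All the key points are in place: $eA^{\lambda}e\cap A^{<\lambda}=eA^{<\lambda}e$ so the cell filtration genuinely induces a filtration of $eAe$; the hypothesis $i(e)=e$ is used exactly where it must be, namely to turn the right action of $e$ on the second tensor factor of $A^{\lambda}/A^{<\lambda}\cong W(\lambda)\otimes_F W(\lambda)$ (which is a priori left multiplication by $i(e)$) into left multiplication by $e$, so that one subset $M_e(\lambda)$ works for both factors and the family is closed under $i$; and the independence of the structure constants from the second index descends from the bimodule identification. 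The only points you gloss over are routine: extracting a basis from the spanning set $\{eC_S^{\lambda}\}$, refining the poset order to a chain so that the filtration argument makes sense, and checking that $eA^{<\lambda}e$ coincides with the span of the higher members of the new family (which is the ``spanning by induction'' step you invoke). For context, K\"onig and Xi's own proof of their Proposition 4.3 runs through their basis-free characterization of cellularity via chains of $i$-stable ideals whose subquotients are cell ideals $\Delta\otimes_F i(\Delta)$; multiplying such a chain by $e$ and discarding the layers with $e\Delta=0$ is precisely the coordinate-free shadow of your construction, so the two arguments are the same proof in different languages --- yours has the advantage of producing an explicit cellular basis $\{eC^{\lambda}_{S,T}e\}$ of $eAe$, theirs of avoiding any choice of $M_e(\lambda)$.
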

With the same hypothesis as in Proposition \ref{prop:idem}, if $J$ is a cell ideal and $\Delta$ is a cell module of $A$ , then $eJe$ is a cell ideal and $e\Delta$ is a cell module of  $eAe$.  (Utilizing these, Lemma \ref{lm:idem},  and Theorem \ref{thm:cellular} we hope to construct cell ideals and cell modules for $\MP_{k}(\xi)$ in the future.)

	\begin{definition}
	Let $\mathcal{Y}_{k}$ be the subset of $\mathcal{A}_{k}$ consisting of partition diagrams $d=\{B_1,B_2,\ldots, B_n\}$  satisfying the following condition: 
	\begin{equation}\label{eq:bal}
		|B_j^{u}|=|B_j^{l}| \text{ for $1\leqslant j\leqslant n$}.
	\end{equation} 
\end{definition}

	\begin{example} \label{ex:e}For $k=2$, the identity element of $\MP_k(\xi)$ is 
		
		\begin{displaymath}
			\xymatrix{ 
				{} \ar @{}[d]|{ id \quad =}&
				\overset{0}{\bullet} & \overset{1}{\bullet}\ar@2{-}[d] & \overset{2}{\bullet}\\
				{}& \underset{0}{\bullet} & \underset{1}{\bullet}  & \underset{2}{\bullet}
			}
			\quad
			\xymatrix{ 
				{} \ar @{}[d]|{ + }&
				\overset{0}{\bullet}  & \overset{1}{\bullet}&\overset{2}{\bullet}\ar@1{-}[d]\\
				{}&  \underset{0}{\bullet} &  \underset{1}{\bullet} &\underset{2}{\bullet}}	
		\end{displaymath}
		
	Also,	we have $\mathcal{Y}_2=\{d_1,d_2,d_3\}$, where 
	\begin{center}
			\begin{tikzpicture}[scale=0.9,mycirc/.style={circle,fill=black, minimum size=0.1mm, inner sep = 1.5pt}]
				\node (1) at (-1,0.5) {$d_1=$};
				\node[mycirc,label=above:{$1$}] (n1) at (0,1) {};
				\node[mycirc,label=above:{$2$}] (n2) at (1,1) {};
				\node[mycirc,label=below:{$1'$}] (n1') at (0,0) {};
				\node[mycirc,label=below:{$2'$}] (n2') at (1,0) {};
				\draw (n1)--(n1');
				\draw (n2)-- (n2');
			\end{tikzpicture}
			\begin{tikzpicture}[scale=0.9,mycirc/.style={circle,fill=black, minimum size=0.1mm, inner sep = 1.5pt}]
				\node (1) at (-1,0.5) {, $d_2=$};
				\node[mycirc,label=above:{$1$}] (n1) at (0,1) {};
				\node[mycirc,label=above:{$2$}] (n2) at (1,1) {};
				\node[mycirc,label=below:{$1'$}] (n1') at (0,0) {};
				\node[mycirc,label=below:{$2'$}] (n2') at (1,0) {};
				\draw (n1)--(n2');
				\draw (n2)-- (n1');
			\end{tikzpicture}
			\begin{tikzpicture}[scale=0.9,mycirc/.style={circle,fill=black, minimum size=0.1mm, inner sep = 1.5pt}]
				\node (1) at (-1,0.5) {, $d_3=$};
				\node[mycirc,label=above:{$1$}] (n1) at (0,1) {};
				\node[mycirc,label=above:{$2$}] (n2) at (1,1) {};
				\node[mycirc,label=below:{$1'$}] (n1') at (0,0) {};
				\node[mycirc,label=below:{$2'$}] (n2') at (1,0) {};
				\draw (n1)..controls(0.5,0.5)..(n2);
				\draw (n2)-- (n2');
				\draw (n1)--(n1');
			\end{tikzpicture}
		\end{center}

		One can check that $\tilde{\eta}_{k}({id})=\frac{1}{2}(x_{d_1}+x_{d_{2}})+x_{d_{3}}$, where
		$\tilde{\eta}_k$ is defined in Equation~\eqref{al:eta-k}.
		
	\end{example}

	\begin{lemma}\label{lm:idem}
		Let $e = \sum_{d\in\mathcal{Y}_k} \frac{1}{\alpha_{d}} x_{d}$.
		Then $e$ is an idempotent and 
		the embedding $\tilde{\eta}_{k}$ induces an isomorphism between
		$\MP_{k}(\xi)$ and $e\Pa_{k}(\xi)e$.
	\end{lemma}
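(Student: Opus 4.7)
My approach is to combine the algebra-homomorphism property of $\tilde{\eta}_k$ from Theorem~\ref{cor:new_lambda} with a polynomial-identity/Schur--Weyl specialization argument. The first key step is to recognize that $e$ is exactly the image of the identity of $\MP_k(\xi)$ under $\tilde{\eta}_k$: writing $id = \sum_{[\Gamma]\in\mathcal{U}_k}[\Gamma]$ for the identity from Theorem~\ref{thm:mult_k}, the canonical diagram $d_\Gamma$ associated to any $[\Gamma]\in\mathcal{U}_k$ (whose edges are all of the form $(i,i)$) satisfies $|B_j^u|=|B_j^l|$ block-by-block, and this property is preserved by $S_k\times S_k$, so $\mathcal{Y}_k = \bigsqcup_{[\Gamma]\in\mathcal{U}_k} O_{d_\Gamma}$. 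Since $\alpha_d$ depends only on the multiset $\{|B_j^l|\}$ it is constant on each orbit, and grouping terms yields $\tilde{\eta}_k(id) = \sum_{d\in\mathcal{Y}_k}\frac{1}{\alpha_d}x_d = e$. Idempotency is then automatic: $e^2 = \tilde{\eta}_k(id)^2 = \tilde{\eta}_k(id^2) = \tilde{\eta}_k(id) = e$.

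For the isomorphism, injectivity of $\tilde{\eta}_k$ is Theorem~\ref{cor:new_lambda}, and the containment $\tilde{\eta}_k(\MP_k(\xi))\subseteq eP_k(\xi)e$ follows from $\tilde{\eta}_k(x) = \tilde{\eta}_k(id\cdot x\cdot id) = e\tilde{\eta}_k(x)e$. Let $V$ denote the image. By inspection $V$ is the $F[\xi]$-submodule of $P_k(\xi)$ spanned by the orbit sums $\sum_{d\in O}x_d$ for $O\in(S_k\times S_k)\setminus A_k$, equivalently the set of $\sum_d c_d(\xi)x_d$ whose coefficients $c_d(\xi)$ are constant along each orbit; this is free of rank $|\tilde{\mathcal{B}}_k|$. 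The remaining step is the reverse containment $eP_k(\xi)e\subseteq V$, which reduces to proving $ex_d e\in V$ for every $d\in A_k$.

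Writing $ex_d e = \sum_{d''}c_{d''}(\xi)x_{d''}$, I need $c_{d''}(\xi)=c_{d'''}(\xi)$ whenever $d''$ and $d'''$ share an orbit; the plan is to specialize $\xi=n$ at every integer $n\geq 2k$ and use a polynomial-identity argument. The commutative square from the proof of Theorem~\ref{cor:new_lambda}, together with the Schur--Weyl dualities (Theorems~\ref{thm:swd_mult} and~\ref{thm:SWD_partition}), identifies $\tilde{\eta}_k|_{\xi=n}$ with the composition
\begin{displaymath}
\MP_k(n)\xrightarrow{\phi}\End_{S_n}(\Sym^k(F^n))\xrightarrow{f\mapsto\tau_k f\pi_k} p\End_{S_n}((F^n)^{\otimes k})p\xrightarrow{\phi_k^{-1}} eP_k(n)e,
\end{displaymath}
where $p=\tau_k\pi_k$ and the middle arrow is a bijection with inverse $g\mapsto\pi_k g\tau_k$ (using $\pi_k\tau_k=id$). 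For $n\geq 2k$ every arrow is an isomorphism, so $V_n = eP_k(n)e$; hence $c_{d''}(n)=c_{d'''}(n)$ for all $n\geq 2k$, and since $F$ has characteristic zero this forces $c_{d''}=c_{d'''}$ in $F[\xi]$, and so $ex_d e\in V$.

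I expect the main obstacle to lie in bridging the gap between integer specializations (where the Schur--Weyl argument yields $V_n = eP_k(n)e$) and the formal polynomial ring $F[\xi]$. This requires the careful identification $\phi_k(eP_k(n)e) = p\End_{S_n}((F^n)^{\otimes k})p = \eta_k(\End_{S_n}(\Sym^k(F^n)))$ for $n\geq 2k$, the first equality coming from $\phi_k$ being an isomorphism and the second from the explicit inverse formula $g\mapsto\pi_k g\tau_k$. Once this identification is in hand, combining both containments yields $V=eP_k(\xi)e$, upgrading the injective algebra homomorphism $\tilde{\eta}_k$ to the claimed isomorphism $\MP_k(\xi)\simeq eP_k(\xi)e$.
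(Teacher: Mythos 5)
Your proposal is correct, and the first half (showing $\tilde{\eta}_k(id)=e$ via $\mathcal{Y}_k=\bigsqcup_{[\Gamma]\in\mathcal{U}_k}O_{d_\Gamma}$ and the orbit-constancy of $\alpha_d$, hence idempotency and the containment of the image in $eP_k(\xi)e$) coincides with the paper's argument. Where you genuinely diverge is the reverse containment $eP_k(\xi)e\subseteq \mathrm{im}(\tilde{\eta}_k)$. The paper computes $ex_{d_0}e=\sum_{d_1,d_2\in\mathcal{Y}_k}\frac{1}{\alpha_{d_1}\alpha_{d_2}}x_{d_1}x_{d_0}x_{d_2}$ directly in the orbit basis: it observes that diagrams in $\mathcal{Y}_k$ have no nonempty blocks confined to a single row, so no coarsenings or falling factorials appear, and then counts the multiplicity $\theta_{d_0}$ of pairs $(d_1,d_2)$ producing a given $d=d_1\circ d_0\circ d_2$ in the orbit of $d_0$, concluding that $ex_{d_0}e$ is the explicit scalar $\theta_{d_0}/\beta_{d_0}$ times $\tilde{\eta}_k([\Gamma])$. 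You instead characterize $\mathrm{im}(\tilde{\eta}_k)$ as the combinations with orbit-constant coefficients, prove $eP_k(n)e=V_n$ for every $n\geq 2k$ via the commuting square and the two Schur--Weyl dualities, and then lift to $F[\xi]$ by the characteristic-zero polynomial-identity argument --- exactly the device the paper itself uses to prove associativity in Corollary~\ref{coro:asso}. Your route trades the paper's delicate combinatorial multiplicity count (arguably the most error-prone step of its proof) for a second invocation of Schur--Weyl, at the cost of losing the explicit scalars $\theta_{d_0}/\beta_{d_0}$ and of working only up to specialization; the one small point you should make explicit is that $\phi_k(e)=p$, which follows from applying the commuting square to $id$ (namely $\phi_k(\tilde{\eta}_k(id))=\eta_k(\phi(id))=\tau_k\pi_k$), since this is what justifies $\phi_k(eP_k(n)e)=p\End_{S_n}((F^n)^{\otimes k})p$.
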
 
	
	\begin{proof}
		We show $\tilde{\eta}_k(id)=e$, where $id=\sum_{[\Gamma]\in\mathcal{U}_k}[\Gamma]$ is the identity element of $\MP_k(\xi)$ (see Equation \eqref{mpa:id}). From Equation~\eqref{al:eta-k}
		\begin{displaymath}
			\tilde{\eta}_{k}(id)=\sum_{[\Gamma]\in\mathcal{U}_k}\frac{1}{\alpha_{d_{\Gamma}}} \sum_{d\in O_{\Gamma}}x_{d}.
		\end{displaymath}
		For $[\Gamma]\in \mathcal{U}_k$, by construction of $d_{\Gamma}$ we have $d_{\Gamma}\in \mathcal{Y}_k$. Also, since $\mathcal{Y}_k$ is closed under the action of $S_k\times S_k$,  the orbit $O_{\Gamma}\subseteq \mathcal{Y}_k$. Conversely, given $d\in \mathcal{Y}_k$, we have $[G_d]\in \mathcal{U}_k$  (see Section \ref{subsec:embed} for the definition of $G_d$). So we obtain $\bigsqcup_{[\Gamma]\in \mathcal{U}_k}O_{\Gamma}=\mathcal{Y}_k$. It follows that $\tilde{\eta}_k(id)=e$ by observing that for $d_1,d_2$ in the same orbit,
		$\alpha_{d_1}=\alpha_{d_2}$.
		 The map $\tilde{\eta}_k$ is an algebra homomorphism and $\tilde{\eta}_{k}(id)=e$, therefore $e$ is an idempotent.
		 
Also, the image of $\tilde{\eta}_k$ is contained in 
		$e\Pa_k(\xi)e$. For $d_0\in \mathcal{A}_k$, we show that $ex_{d_{0}}e$ is in the image of $\tilde{\eta}_k$. In particular, we show that $ex_{d_{0}}e$ is a scalar multiple of $\tilde{\eta}_k([\Gamma])$ for $\Gamma=G_{d_{0}}$. We have
		\begin{align}\label{eq:image_idem}
			ex_{d_{0}}e&=\sum_{d_1,d_2\in\mathcal{Y}_k}\frac{1}{\alpha_{d_1}\alpha_{d_2}}x_{d_{1}}x_{d_0}x_{d_2} \nonumber\\&= \sum_{\substack{d_1,d_2\in\mathcal{Y}_k\\ d_2^l=d_0^u}}\frac{1}{\alpha_{d_{1}}\alpha_{d_{2}}}x_{d_1}x_{d_{0}\circ {d_{2}}} \nonumber\\
			&=\sum_{\substack{d_1,d_2\in\mathcal{Y}_k \\ d_2^l=d_0^u \\d_0^l=d_1^u}}\frac{1}{\alpha_{d_{1}}\alpha_{d_{2}}}x_{d_{1}\circ d_{0}\circ d_{2}}.
		\end{align}
The first equality in the above is due to Theorem \ref{thm:xd} : since $d_2\in \mathcal{Y}_k$ (in particular every vertex in the top row is connected to a vertex in the bottom row), $[d_0\circ d_2 ]=0$ and there is no coarsening of $d_0\circ d_2$. Likewise the second equality holds with the extra observation that if $d_2^l=d_0^u$ then  $(d_0\circ d_2)^l=d_0^l$. 
		
		Since $d_1\in\mathcal{Y}_k$ and $d_0^l=d_1^u$, we have $\alpha_{d_0}=\alpha_{d_{1}}$. For $d_1,d_2\in\mathcal{Y}_k$ with $d_2^l=d_0^u$ and $d_0^l=d_1^u$, we see that $d_0$ and $d_1\circ d_0\circ d_2$ are in the same $S_k\times S_k$-orbit $O_{\Gamma}$ for $\Gamma=G_{d_{0}}$. Conversely, if $d$ in $S_k\times S_k$-orbit $O_{\Gamma}$, then there exist $d_1,d_2\in \mathcal{Y}_k$ such that $d=d_1\circ d_0 \circ d_2$.  
		
		If $d_0$ has exactly $i_{(a,b)}$ blocks $B_{m_1},\ldots, B_{m_{i_{(a,b)}}}$ ($a, b$ are non-negative integers both simultaneously not zero) such that 
		$|B_{m_s}^{u}|=a$ and $|B_{m_s}^l|=b$ for $1\leqslant s\leqslant i_{(a,b)}$ then there are  $r_{d_{0}}=\prod_{(a,b)} (i_{(a,b)}!)$ pairs of partition diagrams $(d_1,d_2)$ in $\mathcal{Y}_k\times\mathcal{Y}_k$ such that $d_1\circ d_0 \circ d_2=d$. Since $d_2^l=d_0^u$ therefore 
		$\alpha_{d_2}$ is the cardinality of $S_k$-orbit of $d_0^u$. Denote this cardinality by $\beta_{d_0}$. Then the sum~\eqref{eq:image_idem} simplifies to 
		\begin{align*}
			\sum_{d\in O_{\Gamma}}\frac{r_{d_{0}}}{\alpha_{d_0}\beta_{d_{0}}}x_{d}&=\frac{r_{d_0}}{\beta_{d_0}}\bigg(\frac{1}{\alpha_{d_{0}}}\sum_{d\in O_{\Gamma}} x_{d}\bigg)=\frac{r_{d_0}}{\beta_{d_0}}\tilde{\eta}_k([\Gamma])
		\end{align*}
		where $[\Gamma]\in \tilde{\mathcal{B}}_k$ corresponds to $O_{\Gamma}$ under the bijection~\eqref{eq:bij}. Thus we see that $ex_{d_{0}}e$ is in the image of $\tilde{\eta}_k$.
	\end{proof}

	In the following theorem we show that the multiset partition algebras are generically semisimple over $F$. That is, $\MP_{k}(v)$ is semisimple over $F$ for all but finitely many values 
$v$	in $F$.
	
\begin{theorem}\label{thm:gensem}
The algebra $\MP_k(\xi)$ is semisimple over $F[\xi]$.  Furthermore, for $v\in F$,  $\MP_{k}(v)$ over $F$ is semisimple when $v$ is not an integer or when $v$ is an integer such that $v\notin\{0,1,\ldots, 2k-2\}$.
	\end{theorem}
\begin{proof}
	Since $e$ is an idempotent, the radical rad$(e\Pa_k(\xi)e)=e(\text{rad}(\Pa_k(\xi)))e$. We know that $\Pa_{k}(\xi)$ is semisimple over $F[\xi]$ and also semisimple when $v$ is not an integer or $v$ is a integer such that $v\notin\{0,1,\ldots,2k-2\}$. In all these cases, so the radical of the partition algebra is trivial.  Now by Lemma~\ref{lm:idem} we have that the radical of the multiset partition algebra is also trivial in these cases, in particular it is semisimple.
 \end{proof}
 
\begin{definition}
	The anti-involution $\mathfrak{i}$ is defined for $d=\{(B_1^u,B_1^l),\ldots,(B_n^u,B_n^l)\}\in \mathcal{A}_k$ by:
	\begin{equation*}
		\mathfrak{i}(d)=\{(B_1^l,B_1^u),\ldots,(B_{n}^l,B_{n}^u)\}
	\end{equation*}
	which may be visualized as interchanging each primed element $j'$ with unprimed
	element $j$ and vice versa. This map is extended linearly to $\Pa_k(\xi)$.
	\end{definition}
	
	 Xi~\cite{Xi} showed that partition algebras are cellular with respect to $\mathfrak{i}$. Under the embedding in Theorem \ref{cor:new_lambda}, the anti-involution $\mathfrak{i}$ on $\Pa_{k}(\xi)$ translates to the anti-involution $\tilde{\mathfrak{i}}$ on $\MP_{k}(\xi)$. For $[\Gamma]\in\tilde{\mathcal{B}}_k$, $\tilde{\mathfrak{i}}([\Gamma])$ is obtained by reflecting $\Gamma$ along the horizontal axis. 

	\begin{theorem}\label{thm:cellular}
	For $v\in F$,	 $\MP_{k}(v)$ is cellular with respect to the anti-involution $\tilde{\mathfrak{i}}$. 
	\end{theorem}
	
	\begin{proof}
		From Lemma~\ref{lm:idem} we have $\MP_{k}(v)\cong e \Pa_{k}(v) e$, where
		\begin{displaymath}
			e=\sum_{d\in\mathcal{Y}_k}\frac{1}{\alpha_d}x_d.
		\end{displaymath}
		We show that $e\Pa_k(v)e$ is a cellular algebra with respect to the anti-involution $\mathfrak{i}$. 
		The restriction of the map $\mathfrak{i}$ to $\mathcal{Y}_k$ is a bijection on $\mathcal{Y}_k$. Moreover, for $d\in\mathcal{Y}_k$, we have $\mathfrak{i}(x_{d})=x_{\mathfrak{i}(d)}$. So $\mathfrak{i}(e)=\sum_{d\in \mathcal{Y}_k}\frac{1}{\alpha_{d}}x_{\mathfrak{i}(d)}$. By observing that for $d\in\mathcal{Y}_k$, the cardinality of $S_k$-orbit of $d^u$ is the same as the cardinality of $S_k$-orbit of $d^{l}$, i.e, $\alpha_{d}=\alpha_{\mathfrak{i}(d)}$. We conclude that $\mathfrak{i}(e)=e$. By Proposition~\ref{prop:idem} we have $e\Pa_k(v)e$ is a cellular algebra.
	\end{proof}                                                                                                                                                         
	
	\section{The generalized multiset partition algebra}
	\label{app:A}
	For every vector $\lambda=(\lambda_1,\ldots, \lambda_s)$ of non-negative integers, 	let $\mathcal{V}_{\lambda}$ be the set of vectors
	\begin{equation}
		\mathcal{V}_\lambda=\{(i_1,i_2,\ldots,i_s)\in\ZZ_{\geqslant 0}^{s}\mid i_j\leqslant \lambda_j,\,1\leqslant j\leqslant s\}. 
	\end{equation}
	Let $\Gamma$ be a {multigraph} whose vertices are arranged in two rows. The vertices in the top row are labelled by the elements of  $\mathcal{V}_\lambda$, as are the vertices in the bottom row. Let $E_{\Gamma}$ denote the multiset of edges of $\Gamma$. Every edge of $\Gamma$ connects a vertex in the top row to a vertex in the bottom row. 
	The \defn{weight} of an edge $e$ of $\Gamma$ joining $(i_1,i_2,\ldots,i_s)$ in the top row with $(j_1,j_2,\ldots,j_s)$ in the bottom row is 
	$((i_1,i_2,\ldots,i_s),(j_1,j_2,\ldots,j_s))\in \ZZ_{\geqslant 0}^s\times \ZZ_{\geqslant 0}^s$. The \defn{weight} of $\Gamma$ is the (pointwise) sum of the weights of its edges. An edge is said to be \defn{non-zero} if its weight is not $({\bf 0},{\bf 0})$, with ${\bf 0}:=(0,\dotsc,0))$. The \defn{rank} of $\Gamma$, denoted $\rn(\Gamma)$, is the number of non-zero edges of $\Gamma$.

Let $\mathcal{B}_{\lambda}$ denote the set of multigraphs of weight $(\lambda,\lambda):=((\lambda_1,\ldots,\lambda_s),(\lambda_1,\ldots,\lambda_s))$. 
Two multigraphs in $\mathcal{B}_{\lambda}$ are said to be equivalent if they have the same non-zero edges. Let $\tilde{\mathcal{B}}_{\lambda}$ denote the set of equivalence classes in $\mathcal{B}_{\lambda}$. We define
	$\rn ({[\Gamma]}):=\rn(\Gamma)$. 
	
	Let $(I,J):=((i_1,i_2,\ldots,i_s),(j_1,j_2,\ldots,j_s))$ be a non-zero weighted edge of a multigraph in $\mathcal{B}_{\lambda}$. Then the following correspondence 
	\begin{equation}\label{mp cor}
		(I,J)\leftrightarrow\{1^{i_1},\ldots,s^{i_s},1'^{j_1},\ldots,s'^{j_s}\} 
	\end{equation}
	extends to a bijection between 
	the set $\tilde{\mathcal{B}}_{\lambda}$ and the set of multiset partitions of  
	$\{1^{\lambda_1},\ldots,s^{\lambda_s},1'^{\lambda_1},\ldots,s'^{\lambda_s}\}$. Thus the elements of $\tilde{\mathcal{B}}_{\lambda}$ depict the multiset partitions of $\{1^{\lambda_1},\ldots,s^{\lambda_s},1'^{\lambda_1},\ldots,s'^{\lambda_s}\}$.
	
	\begin{example}
		Consider the following graph in $\mathcal{B}_{(2,1)}$.
		\begin{displaymath}
			\xymatrix{
			\overset{(0,0)}{\bullet}\ar@{-}[rrd] & \overset{(0,1)}{\bullet}\ar@{-}[rrd] & \overset{(1,0)}{\bullet} & \overset{(1,1)}{\bullet} & \overset{(2,0)}{\bullet}\ar@{-}[lllld] & \overset{(2,1)}\bullet\\
			 \underset{(0,0)}{\bullet} & \underset{(0,1)}{\bullet}& \underset{(1,0)}{\bullet} & \underset{(1,1)}{\bullet} & \underset{(2,0)}{\bullet} & \underset{(2,1)}\bullet }
			\end{displaymath}
			The multiset partition of $\{1^2,2,1'^2,2'\}$ associated to this graph is $$\{\{1'\},\{2,1',2'\},\{1^2\}\}.$$
	\end{example}

	We define $\MP_{\lambda}(\xi)$ to be the free module over $F[\xi]$ with basis $\tilde{\mathcal{B}}_{\lambda}$, and let $\mathcal{U}_{\lambda}$ be the subset of $\tilde{\mathcal{B}}_{\lambda}$ consisting of equivalence classes of graphs whose edges are of the form $(I,I)$ for $I\in\mathcal{V}_{\lambda}$. We will outline the minor modifications to Section \ref{sec:mpa} that are required to specify the structure constants with respect to the basis $\tilde{\mathcal{B}}_{\lambda}$.

Given $\Gamma,\Gamma_1,\Gamma_2\in\mathcal{B}_\lambda$ with $n$ edges each,  we can define \defn{configuration of paths}, and the \defn{support} $\Supp^n(\Gamma_1,\Gamma_2)$,  and the sets $\Supp^{n}_{\Gamma}(\Gamma_1,\Gamma_2)$ and $C_{\Gamma_1\Gamma_2}^{\Gamma}(n)$ as in Section \ref{sec:mpa}. For a configuration $P$, $\Gamma_P$ is defined to be the graph obtained by ignoring the middle vertex in each path in $P$. 
	
	For an edge of weight $(I,J)=((i_1,i_2,\ldots,i_s),(j_1,j_2,\ldots,j_s))$ in $\Gamma_P $ with $P\in \Supp^n_{\Gamma}(\Gamma_1,\Gamma_2)$, and any vertex $L\in \mathcal{V}_{\lambda}$,
	define
	\begin{displaymath}
		p_{IJ}(L):=\text{number of times the path }(I,L,J) \text{ occurs in } P.
	\end{displaymath}
	The multiplicity of the edge $(I,J)$ in $\Gamma_P$ is $p_{IJ}=\sum_{L\in \mathcal{V}_{\lambda}} p_{IJ}(L)$. 
	
	Let $v_0,v_1,\ldots,v_a$ be an enumeration of the elements of $\mathcal{V}_{\lambda}$ in the lexicographically increasing order (so in particular $v_0={\bf 0}$). For vector $\lambda=(\lambda_1,\ldots,\lambda_s)$ of non-negative integers, let $|\lambda|=\sum_{i=1}^{s} \lambda_{i}$.	Let $[\Gamma_1],[\Gamma_2],$ and $[\Gamma]$ be in $\tilde{\mathcal{B}}_{\lambda}$.   	The \defn{structure constant} of $[\Gamma]$ in the product $[\Gamma_1]*[\Gamma_2]$ is the following:
	\begin{align}\label{eq:poly_gen}
		\Phi_{[\Gamma_1][\Gamma_2]}^{[\Gamma]}(\xi)=
		\sum_{P \in \Supp^{3|\lambda|}_{\Gamma}(\Gamma_1,\Gamma_2)}\mathcal{K}_P\cdot(\xi-\rn{(\Gamma)})_{[P^{\Gamma_1\circ \Gamma_2}]},
	\end{align}
	where
	$[P^{\Gamma_1\circ \Gamma_2}]=\sum_{i=1}^{a}p_{\bf 0\bf 0}(v_i)$, 
	\begin{equation*}
		\mathcal{K}_P=  
		\frac{1}{p_{{\bf 0}{\bf 0}}(v_1)!\cdots p_{{\bf 0}{\bf 0}}(v_a)!}
		\prod_{(I,J)\in D_\Gamma\setminus\{({\bf 0},{\bf 0})\}}{p_{IJ}\choose p_{IJ}(v_0),\ldots,p_{IJ}(v_a)}
	\end{equation*}
	and $D_\Gamma$ denotes the set of all distinct edges of $\Gamma$.

	\subsection{Schur--Weyl duality} 
	In this section we state the Schur--Weyl duality between the actions of 
	$S_n$ and $\MP_{\lambda}(n)$ on $\Sym^{\lambda}(F^n)$.

	Recall that $\Sym^{\lambda}(F^n):=\Sym^{\lambda_1}(F^n)\otimes \Sym^{\lambda_2}(F^n)\otimes\cdots\otimes \Sym^{\lambda_s}(F^n)$.
	The choice of indexing set $M(n,\lambda_j)$ for a basis of each $\Sym^{\lambda_j}(F^n)$ yields the following indexing set $M(n,\lambda)$ for a basis of $\Sym^{\lambda}(F^n)$
	
	\begin{displaymath}
		M(n,\lambda):=\left\lbrace A=\begin{bmatrix}
			a_{11}       & a_{12} & \dots & a_{1n} \\
			\vdots       & \vdots & \dots & \vdots  \\
			a_{s1}       & a_{s2} & \dots & a_{sn}
		\end{bmatrix} \mid a_{ij}\in\ZZ_ {\geqslant 0}, \sum_{j=1}^{n}a_{ij}=\lambda_i \right\rbrace.
	\end{displaymath}
	The space $\Sym^{\lambda}(F^n)$ has a basis $\{e^A\mid A \in M(n,\lambda)\}$,
	where 
	\begin{displaymath}
		e^{A}:=e_1^{a_{11}} \dots e_n^{a_{1n}} \otimes e_1^{a_{21}} \dots e_n^{a_{2n}} \otimes \dots \otimes e_1^{a_{s1}} \dots e_n^{a_{sn}}.
	\end{displaymath}
	The symmetric group $S_n$ acts on an element $A\in M(n,\lambda)$ by permuting columns of $A$. The following isomorphism makes 
	$\Sym^\lambda (F^n)$ a permutation representation of $S_n$
	\begin{equation}\label{perm lambda}
		\Sym^\lambda (F^n) \cong F[M(n,\lambda)].
	\end{equation}
	
	As in Equation~\eqref{k-orbit-diagram bij}, the set of $S_n$-orbits of $M(n,\lambda)\times M(n,\lambda)$ is in bijection with the set $\tilde{\mathcal{B}}_{\lambda,n}$ consisting of elements of rank at most $n$ in $\tilde{\mathcal{B}}_{\lambda}$. In the following definition, we give the integral operator corresponding to an $S_n$-orbit of $M(n,\lambda)\times M(n,\lambda)$.

	\begin{definition} \label{integral operator lambda}
		For each $[\Gamma] \in \tilde{\mathcal B}_{\lambda,n} $   define $T_{[\Gamma]} \in \End_{S_n}(F[M(n,\lambda)])$ by 
		$$ T_{[\Gamma]} (1_{A})=\sum_{\{B \in M(n,\lambda) \mid[\Gamma_{A,B}]=[\Gamma]\}}1_{B}, \quad \text{for }A \in M(n,\lambda),$$
where the edges of $\Gamma_{A,B}$ connect a column of $A$ above to the corresponding column of $B$ below. 
	\end{definition}

	\begin{proposition}\label{thm:genT}
		The set
		$\{T_{[\Gamma]} \mid [\Gamma] \in \tilde{\mathcal B}_{\lambda,n}  \}$ is a basis of $\End_{S_n}(F[M(n,\lambda)])$. Given  $[\Gamma_1],[\Gamma_2] \in \tilde{\mathcal{B}}_{\lambda,n}$,  the product is given by
		$$ T_{[\Gamma_1]} T_{[\Gamma_2]}=\sum_{[\Gamma] \in \tilde{ \mathcal{B}}_{\lambda,n}} |C_{\Gamma _1 \Gamma _2}^{\Gamma}(n)|T_{[\Gamma]},
		$$
		where $\Gamma,\Gamma_1,\Gamma_2$ are the representative of the respective classes with $n$ edges.
	\end{proposition}
The proof of  Proposition \ref{thm:genT} is analogous to Proposition~\ref{thm:strcu_const}.

	\begin{theorem}\phantomsection\label{thm:swd}
		
		\begin{enumerate}
			\item Define a map 
			\begin{equation}
				\label{eq:swd lambda}
				\phi: \MP_\lambda(n) \to   \End_{S_n}(F[M(n,\lambda)])  \text{ by } 
			\end{equation}
			\[\phi([\Gamma])=
			\begin{cases}
				T_{[\Gamma]} &\text{ if $\rn[\Gamma]\leqslant n$},\\
				0 &\text{otherwise}.
			\end{cases}
			\]
			Then the map $\phi$ is a surjective  algebra homomorphism with the kernel
			$$\ker (\phi)=F\text{-$\spn$}\{[\Gamma]\in\tilde{\mathcal{B}}_{\lambda}\mid \rn([\Gamma])>n\}.$$ In particular, when $n\geqslant 2|\lambda|$, 
			$\MP_{\lambda}(n) \cong \End_{S_n}(F[M(n,\lambda)])$.
			\item The group algebra $F[S_n]$ generates $\End_{\MP_{\lambda}(n)}(F[M(n,\lambda)])$.
		\end{enumerate}
	\end{theorem}
		The proof of  Theorem \ref{thm:swd} is analogous to Theorem \ref{thm:swd_mult}.

Our next theorem is the analogue of Theorem \ref{thm:mult_k} for $\MP_{\lambda}(\xi)$. 
	\begin{theorem}\label{thm:multi_rule}
		For $[\Gamma_1],[\Gamma_2]$ in $\tilde{\mathcal{B}}_\lambda$,
		we define the following operation:
		\begin{equation}\label{multiplication rule}
			[\Gamma_{1}]*[\Gamma_2]=\sum_{[\Gamma] \in \tilde{\mathcal{B}}_{\lambda}} \Phi_{[\Gamma_1][\Gamma_2]}^{[\Gamma]}(\xi)[\Gamma],
		\end{equation}
		where $\Phi_{[\Gamma_1][\Gamma_2]}^{[\Gamma]}(\xi)\in F[\xi]$ is given in Equation~\eqref{eq:poly_gen}. 
		The linear extension of the operation~\eqref{multiplication rule} makes $\MP_\lambda(\xi)$ an associative unital algebra  over $F[\xi]$ with the identity element $id=\sum_{[\Gamma]\in\mathcal{U}_\lambda}[\Gamma]$.
	\end{theorem}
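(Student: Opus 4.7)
The strategy parallels the proof of Theorem~\ref{thm:mult_k}. First, I will define $\Phi^{[\Gamma]}_{[\Gamma_1][\Gamma_2]}(\xi)$ by the same formula as in Equation~\eqref{eq:poly_gene}, with the integer vertex labels $\{0,1,\ldots,k\}$ replaced by $\mathcal{V}_\lambda$: given representatives $\Gamma_1,\Gamma_2 \in \mathcal{B}_\lambda$ with a common number $n$ of edges (padding with $({\bf 0},{\bf 0})$-edges as needed), a path is a triple $(A,B,C)\in \mathcal{V}_\lambda^3$ such that $(A,B)$ is an edge of $\Gamma_2$ and $(B,C)$ is an edge of $\Gamma_1$. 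A configuration $P$ is a multiset of $n$ such paths satisfying the natural covering condition, and
\begin{displaymath}
\Phi^{[\Gamma]}_{[\Gamma_1][\Gamma_2]}(\xi) = \sum_{P \in \Supp^{2|\lambda|}_\Gamma(\Gamma_1,\Gamma_2)} \mathcal{K}_P \cdot (\xi - \rn(\Gamma))_{[P^{\Gamma_1 \circ \Gamma_2}]},
\end{displaymath}
where $\mathcal{K}_P$ is a product of multinomial coefficients indexed by the distinct edges of $\Gamma_P$ exactly as in Equation~\eqref{eq:poly_gene}, and $[P^{\Gamma_1\circ\Gamma_2}]$ counts the paths of the form $({\bf 0},I,{\bf 0})$ with $I \neq {\bf 0}$. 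The analog of Remark~\ref{rm:supp} holds verbatim, so this formula is independent of the padding.

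Second, I will prove the $\lambda$-analog of Theorem~\ref{thm:poly}: evaluating $\Phi^{[\Gamma]}_{[\Gamma_1][\Gamma_2]}(\xi)$ at a positive integer $n$ yields $|C^\Gamma_{\Gamma_1\Gamma_2}(n)|$, where $C^\Gamma_{\Gamma_1\Gamma_2}(n)$ is the natural $\lambda$-version of the set in Equation~\eqref{strc cons}. The combinatorial argument is identical to that of Theorem~\ref{thm:poly}: partition $C^\Gamma_{\Gamma_1\Gamma_2}(n)$ by the underlying configuration, use a multinomial to count permutations within each $P_{IJ}$-block, and split the $({\bf 0},{\bf 0})$-block factor into a falling factorial and a residual multinomial via the identity already used there.

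Third, I will establish a Schur--Weyl duality $\phi : \MP_\lambda(n) \twoheadrightarrow \End_{S_n}(\Sym^\lambda(F^n))$. The isomorphism $\Sym^\lambda(F^n) \cong F[M(n,\lambda)]$ with $M(n,\lambda) := M(n,\lambda_1) \times \cdots \times M(n,\lambda_s)$ realizes $\Sym^\lambda(F^n)$ as a permutation $S_n$-module. Encoding a pair $(\underline{a},\underline{b}) \in M(n,\lambda) \times M(n,\lambda)$ as the bipartite multigraph on $\mathcal{V}_\lambda \sqcup \mathcal{V}_\lambda$ whose $i$-th edge has weight $((a_{1,i},\ldots,a_{s,i}),(b_{1,i},\ldots,b_{s,i}))$ induces a bijection $S_n\setminus (M(n,\lambda) \times M(n,\lambda)) \to \tilde{\mathcal{B}}_{\lambda,n}$, where $\tilde{\mathcal{B}}_{\lambda,n} := \{[\Gamma] \mid \rn[\Gamma] \leq n\}$. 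By Theorem~\ref{corollary:twisted-intertwiner}, the associated integral operators $T_{[\Gamma]}$ form a basis of $\End_{S_n}(\Sym^\lambda(F^n))$, and by the computation of step two their structure constants equal $\Phi^{[\Gamma]}_{[\Gamma_1][\Gamma_2]}(n)$. Setting $\phi([\Gamma]) = T_{[\Gamma]}$ when $\rn[\Gamma] \leq n$ and zero otherwise produces a surjective algebra homomorphism with kernel spanned by the high-rank diagrams, which is an isomorphism for $n \geq 2|\lambda|$. The kernel identification uses the same rank-bookkeeping as at the end of Theorem~\ref{thm:swd_mult}: if $\rn(\Gamma_1) > n$ then $(n-\rn(\Gamma))_{[P^{\Gamma_1\circ \Gamma_2}]}$ vanishes for every $P$, via the rewriting $n - \rn(\Gamma) = (\rn(\Gamma_1) - \rn(\Gamma)) - (\rn(\Gamma_1) - n)$.

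Finally, associativity and unitality over $F[\xi]$ follow by the standard polynomial-identity argument of Corollary~\ref{coro:asso}: for fixed $[\Gamma_1],[\Gamma_2],[\Gamma_3],[\Gamma]$, the coefficients of $[\Gamma]$ in $([\Gamma_1] * [\Gamma_2]) * [\Gamma_3]$ and in $[\Gamma_1] * ([\Gamma_2] * [\Gamma_3])$ are polynomials in $\xi$ that agree at every integer $n \geq 2|\lambda|$ by step three, hence coincide since $\mathrm{char}(F) = 0$. That $id = \sum_{[\Gamma] \in \mathcal{U}_\lambda}[\Gamma]$ is a two-sided unit is verified the same way, using that $\phi(id) = \mathrm{id}_{\Sym^\lambda(F^n)}$ since the $S_n$-orbits of diagonal pairs $(\underline{a},\underline{a})$ are exactly the orbits corresponding to $\mathcal{U}_\lambda$. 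The only real obstacle is the notational bookkeeping in step three, where multi-index labels in $\mathcal{V}_\lambda$ make the bijection and kernel analysis formally heavy, but no conceptual novelty arises beyond the $\lambda = (k)$ case.
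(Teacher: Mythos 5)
Your proposal is correct and follows essentially the same route as the paper: the paper omits the proof of Theorem~\ref{thm:multi_rule} precisely because it is "completely analogous" to the $\lambda=(k)$ case, which is established there by the chain you reproduce --- the path-configuration formula for $\Phi^{[\Gamma]}_{[\Gamma_1][\Gamma_2]}(\xi)$, its evaluation as $|C^{\Gamma}_{\Gamma_1\Gamma_2}(n)|$ (Theorem~\ref{thm:poly}), the match with the structure constants of the integral operators $T_{[\Gamma]}$ and the Schur--Weyl map (Theorems~\ref{thm:strcu_const} and~\ref{thm:swd_mult}), and the polynomial-identity argument for associativity and unitality (Corollary~\ref{coro:asso}). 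No discrepancies of substance.
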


	\subsection{Cellularity of $\MP_{\lambda}(\xi)$}
	In this section, we give an embedding of $\MP_{\lambda}(\xi)$ inside $\Pa_{|\lambda|}(\xi)$ by generalizing the bijection in Equation~\eqref{eq:bij}. 
	
	Let $\lambda=(\lambda_{1},\lambda_{2},\ldots,\lambda_{s})$ be a vector of non-negative integers such that $|\lambda|=k$. Given $\Gamma\in\mathcal{B}_{\lambda}$ with $n$ edges (not necessarily non-zero), we have a pair
	$(P,Q)$ of $s\times n$ matrices in $M(n,\lambda)$ that uniquely determines the graph. Explicitly, let
	\begin{displaymath}
		P=
		\begin{bmatrix}
			p_{11}       & p_{12} & \dots & p_{1n} \\
			p_{21}       & p_{22} & \dots & p_{2n} \\
			\vdots       & \vdots & \dots & \vdots  \\
			p_{s1}       & p_{s2} & \dots & p_{sn}
		\end{bmatrix} \text{ and }
		Q=
		\begin{bmatrix}
			q_{11}       & q_{12} & \dots & q_{1n} \\
			q_{21}       & q_{22} & \dots & q_{2n}\\
			\vdots       & \vdots & \dots & \vdots  \\
			q_{s1}       & q_{s2} & \dots & q_{sn}
		\end{bmatrix}.
	\end{displaymath}

	Each column of $P$ is the label for a vertex in the top row of $\Gamma$ and each column of $Q$ is the label for a vertex in the bottom row of $\Gamma$. A column of $P$ is connected by an edge in $\Gamma$ to the corresponding column of $Q$. The columns of $P$ and $Q$ are simultaneously permuted such that columns of $P$ are in lexicographically increasing  order, and if $(p_{1j},\ldots,p_{sj})^{tr}=(p_{1j+1},\ldots,p_{sj+1})^{tr}$ then $(q_{1j},\ldots,q_{sj})^{tr} \leqslant (q_{1j+1},\ldots,q_{sj+1})^{tr}$, where $tr$ denotes the transpose of a matrix. The graph so obtained is denoted $\Gamma_{P,Q}$.
	
	We define the \defn{canonical  partition diagram}  $d_{\Gamma} = \{B_{1},\ldots, B_{n}\}$ (some of $B_i$ could be empty)  of $\{1,\ldots,k,1',\ldots,k'\}$ corresponding to  $\Gamma=\Gamma_{P,Q}$, where
	\begin{align}
		B_{j}&= B_{1j} \cup \cdots \cup B_{sj} \nonumber  \quad\text{ and }\label{eq:blocks_lam_2}\\
		B_{ij}&:=\bigg\{\big(\sum_{r=1}^{i-1}\lambda_{r}+ \sum_{l=1}^{j-1}p_{il}\big)+1,\ldots, \big(\sum_{r=1}^{i-1}\lambda_{r}+ \sum_{l=1}^{j}p_{il}\big), \\
	&	\quad \quad \quad \quad \big(\sum_{r=1}^{i-1}\lambda_{r}+ \sum_{l=1}^{j-1}q_{il}+1\big)',\ldots, \big(\sum_{r=1}^{i-1}\lambda_{r}+ \sum_{l=1}^{j}q_{il}\big)'
		\bigg\}.\nonumber
	\end{align}
	It follows from the construction that if two graphs in $\mathcal{B}_{\lambda}$ are equivalent, then their corresponding canonical partition diagrams defined as above are the same.

	\begin{example}
		Let $\lambda=(2,1)$. Here 
		\begin{displaymath}
			\mathcal{V}_{\lambda}=\{(0,0),(0,1),(1,0),(1,1),(2,0),(2,1)\}.
		\end{displaymath}
		Let $\Gamma$ be the following multigraph in $\mathcal{B}_{\lambda}$:
			\begin{displaymath}
			\xymatrix{
				\overset{(0,0)}{\bullet}\ar@{-}[rrd] & \overset{(0,1)}{\bullet}\ar@{-}[rrd] & \overset{(1,0)}{\bullet} & \overset{(1,1)}{\bullet} & \overset{(2,0)}{\bullet}\ar@{-}[lllld] & \overset{(2,1)}\bullet\\
				\underset{(0,0)}{\bullet} & \underset{(0,1)}{\bullet}& \underset{(1,0)}{\bullet} & \underset{(1,1)}{\bullet} & \underset{(2,0)}{\bullet} & \underset{(2,1)}\bullet }
		\end{displaymath}

		Then corresponding pair of matrices are follows:
		\begin{displaymath}
			P=\begin{bmatrix}
				0 & 0 & 2\\
				0 & 1 & 0
			\end{bmatrix} \text{ and }
			Q=
			\begin{bmatrix}
				1 & 1 & 0\\
				0 & 1 & 0
			\end{bmatrix}.
		\end{displaymath}
		And we have:
		\begin{center}
			\begin{tabular}{ c c c }
				$B_{11}= \{1'\},$ & $B_{12}= \{2'\}$, & $B_{13}= \{1,2\},$\\
				$B_{21}= \emptyset,$ & $B_{22}= \{3,3'\},$ & $B_{23}= \emptyset$.\\
			\end{tabular}
		\end{center}
		Following Equation \eqref{eq:blocks_lam_2}, the blocks of  canonical set partition associated to $\Gamma$ are:
		\begin{displaymath}
			B_{1}=\{1'\}, B_{2}=\{3,2',3'\},B_{3}=\{1,2\}.
		\end{displaymath}
		This canonical partition diagram $d_{\Gamma}$ in $A_{3}$ is depicted as follows:
		\begin{center}
			\begin{tikzpicture}[scale=0.9,
				mycirc/.style={circle,fill=black, minimum size=0.1mm, inner sep = 1.5pt}]
				\node[mycirc,label=above:{$1$}] (n1) at (0,2) {};
				\node[mycirc,label=above:{$2$}] (n2) at (2,2) {};
				\node[mycirc,label=above:{$3$}] (n3) at (4,2) {};
				\node[mycirc,label=below:{$1'$}] (n1') at (0,1) {};
				\node[mycirc,label=below:{$2'$}] (n2') at (2,1) {};
				\node[mycirc,label=below:{$3'$}] (n3') at (4,1) {};
				\draw (n1)..controls(1,1.5)..(n2);
				\draw (n2')..controls(3,1.5)..(n3');
				\draw (n3)--(n3');
			\end{tikzpicture}
		\end{center}
		
	\end{example}
 Let $S_\lambda$ denote the Young subgroup of $S_{\lvert \lambda \rvert}$. 	The  subgroup $S_{\lambda}\times S_{\lambda}$ of $S_{|\lambda|}\times S_{|\lambda|}$ acts on $\mathcal{A}_{|\lambda|}$ by the restriction. Let $(S_\lambda\times S_\lambda)\setminus \mathcal{A}_{|\lambda|}$ be the set of orbits of the action of $S_\lambda\times S_\lambda$ on $\mathcal{A}_{|\lambda|}$.  So analogously to Equation \eqref{eq:bij} we have the following  bijection
		\begin{align}\label{eq:bij lambda}
			\psi:\tilde{\mathcal{B}}_{\lambda}&\to (S_\lambda\times S_\lambda)\setminus \mathcal{A}_{|\lambda|}\\
			\psi([\Gamma])&=O_{\Gamma},\nonumber
		\end{align}
		where $[\Gamma]\in\tilde{\mathcal{B}}_{\lambda}$ and $O_{\Gamma}$ is the $S_\lambda\times S_\lambda$-orbit of $d_{\Gamma}$.

	For $\lambda=(1^k)$, the subgroup $S_\lambda\times S_\lambda$ is trivial and so it follows from Equation~\eqref{eq:bij lambda} the set $\tilde{\mathcal{B}}_{(1^k)}$ is in bijection with $\mathcal{A}_k$.

To sketch a proof of Theorem \ref{thm:geneem},	we must define some terms in a manner analogous to Section \ref{subsec:embed}.
	\begin{definition}
	\label{def:ordarr}
 	A set partition array $\mathbf{A}=[A_{ij}]_{s\times n}$ of $\lambda=(\lambda_1,\dotsc,\lambda_s)$ into $n$ parts is a $s \times n$ array of disjoint subsets of $\{1,\dotsc,|\lambda|\}$ such that  $\sqcup_{j=1}^n A_{ij} = \{\sum_{m=1}^{i-1}\lambda_m+1,\dotsc, \sum_{m=1}^{i}\lambda_m\}$.  The set of partition array of $\lambda$ into $n$ parts is denoted by $\mathcal{A}_{\lambda,n}$.
	\end{definition}

\begin{definition}
\label{def:enc}
We encode an element $e_{i_1}\otimes \dotsb \otimes e_{i_{|\lambda|}} \in (F^n)^{\otimes |\lambda|}$ as the indicator function $1_{\mathbf{A}}$ for a set partition array $\mathbf{A}=[A_{ij}]$ of $\lambda$ into $n$ parts by defining $$A_{ij}=\bigg\{r \in \bigg\{\sum_{m=1}^{i-1}\lambda_m+1,\dotsc, \sum_{m=1}^{i}\lambda_m\bigg\}\mid e_{i_r}=e_j\bigg\}.$$
It is easy to see that $(F^n)^{\otimes |\lambda|}$ is isomorphic to the permutation module $F[\mathcal{A}_{\lambda,n}]$.
\end{definition}
For $P=[p_{ij}]\in M(n,\lambda)$, let $D(P)$ consist of arrays of set partition $[A_{ij}]\in \mathcal{A}_{\lambda,n}$ such that the cardinality of the set $A_{ij}$
is $p_{ij}$.

We define the canonical projection map $\pi_{\lambda}:F[\mathcal{A}_{\lambda,n}]\to F[M(n,\lambda)]$ and the map $\tau_{\lambda}: F[M(n,\lambda)]\to F[\mathcal{A}_{\lambda,n}]$ as follows:
	\begin{align}
		\pi_{\lambda}(1_\mathbf{A})&= 1_{[a_{ij}]}, \text{ where } \mathbf{A}=[A_{ij}]\in\mathcal{A}_{\lambda,n} \text{ and } a_{ij}=|A_{ij}|\\
		\tau_{\lambda}(1_{P})&= \frac1{|D(P)|} \sum_{\mathbf{S} \in D(P)}1_{\mathbf{S}}.    
	\end{align}                                                                                                                                     
For $\mathbf{S}\in D(P)$, one has $\pi_\lambda(1_{\mathbf{S}})=1_{P}$  and hence $\pi_\lambda\circ \tau_\lambda=id_{F[M(n,\lambda)]}$. 

	Utilizing the bijection in Equation~\eqref{eq:bij lambda}, we have the following theorem.
	\begin{theorem}\label{thm:geneem}
		The following map is an injective algebra homomorphism
		\begin{align}\label{al:eta-lambda}
			\tilde{\eta}_{\lambda}:\MP_{\lambda}(\xi)&\to \Pa_{|\lambda|}(\xi)\nonumber\\
			\tilde{\eta}_{\lambda}([\Gamma])&=\frac{1}{\alpha_{d_\Gamma}}\sum_{d\in O_{\Gamma}} x_{d},
		\end{align}
		for $[\Gamma]\in\tilde{\mathcal{B}}_{\lambda}$, where $\alpha_{d_\Gamma}$ is the cardinality of $S_\lambda$-orbit of $d_{\Gamma}^{l}$.
		In particular, for $\lambda=(1^k)$, $\MP_{\lambda}(\xi)\cong \Pa_{k}(\xi)$.
	\end{theorem}

\begin{proof}
The structure constants for both $\MP_{\lambda}(\xi)$ and $\Pa_\lambda(\xi)$ are polynomials in $\xi$ therefore it is sufficient to prove $\tilde{\eta}_{\lambda}$ is an algebra homomorphism when 
$\xi$ is evaluated at any positive integer. For this consider the following map, which is an algebra homomorphism,
\begin{align*}
	\eta_{\lambda}:\End_{S_n}(F[M(n,\lambda)]) &\to \End_{S_n}(F[\mathcal{A}_{\lambda,n}]) \\
	\eta_\lambda(f)&= \tau_\lambda\circ f\circ\pi_\lambda,
\end{align*}
where $f\in \End_{S_n}(F[M(n,\lambda)])$. Now consider the following diagram:
\begin{displaymath}
	\xymatrix{
		\MP_{\lambda}(n) \ar@{->>}[d]\ar@{^{(}->}[r]^{\tilde{\eta}_{\lambda}} & \Pa_{\lambda}(n)\ar@{->>}[d]\\
		\End_{S_n}(F[M(n,\lambda)]) \ar@{^{(}->}[r]^{\eta_{\lambda}} & \End_{S_n}(F[\mathcal{A}_{\lambda,n}])
	}
\end{displaymath}                                                                                                                  where the leftmost and the rightmost vertical maps are as in the part $(a)$ of Theorem~\ref{thm:swd} and Theorem~\ref{thm:SWD_partition} respectively. By Schur--Weyl duality the vertical maps are algebra isomorphisms for $n\geqslant 2|\lambda|$. Therefore it is enough to prove 
\begin{equation}\label{eq:eta-lambdag}
	\eta_{\lambda}(T_{[\Gamma]})=\frac{1}{\alpha_{d_\Gamma}}\sum_{d\in O_{\Gamma}} x_{d},  \text{ for } [\Gamma]\in\tilde{\mathcal B}_{\lambda,n}.   
\end{equation}
Equation \eqref{eq:eta-lambdag}  ensures that the  above diagram commutes.  In the following, we define a few more required terms for this set up.

Let $\mathbf{A}=[A_{ij}]\in\mathcal{A}_{\lambda,n}$ and let $P=[p_{ij}]\in M(n,\lambda)$. Define $\set(P)$ to be the set of the columns of $P$ and let $\set(\mathbf{A})=\{\sqcup_{i=1}^sA_{ij}\mid 1\leqslant j\leqslant n\}$. For $d\in\mathcal{A}_{|\lambda|}$, we have
\begin{align*}
	x_{d}(1_{\mathbf{A}})=\begin{cases}
			\sum_{\{\mathbf{S}=[S_{ij}]\mid\{(\sqcup_{i=1}^sA_{ij},\sqcup_{i=1}^sS_{ij})|  1\leqslant j\leqslant n\}=d\}} 1_{\mathbf{S}} & \text{ if } \set(\mathbf{A})=d^u,\\
			0 & \text { otherwise. }
	\end{cases}
\end{align*}
Now the proof of Equation \eqref{eq:eta-lambdag} is exactly the same as Equation \eqref{eq:eta-lambda} so we skip it here.
\end{proof}

The following definition is an analog of $\mathcal{Y}_k$ in this set up and it is used to define an idempotent in $\MP_{\lambda}(\xi)$ which in turn describes the image of embedding in Theorem \ref{thm:geneem}.
\begin{definition}
 Let $\mathcal{Y}_{\lambda}$ be the set of $d= \{(B_{1}^{u},B^{l}_{1}),\ldots,(B_{n}^{u},B_{n}^l)\}$ in $\mathcal{A}_{|\lambda|}$ satisfying the following property, for $1\leqslant i\leqslant s$ and $1\leqslant j\leqslant n$,
	\begin{align*}
		\bigg|B_{j}^{u}\cap \{\sum_{r=0}^{i-1}\lambda_r +1,\dotsc,\sum_{r=0}^{i}\lambda_r\}\bigg|&= \bigg|B_{j}^{l}\cap \{\sum_{r=0}^{i-1}(\lambda_r +1)',\dotsc,\sum_{r=0}^{i}\lambda_i'\}\bigg|.
	\end{align*}
\end{definition}

Recall that the set $\mathcal{Y}_k$ comprises diagrams which have the same number of unprimed elements and primed elements in each component of the diagram. The subset $\mathcal{Y}_\lambda$ comprises partitions whose every component comprises the same number of primed and unprimed numbers from each of the sets $\{1,\dotsc,\lambda_1\},\{\lambda_1+1,\dotsc,\lambda_1+\lambda_2\},\dotsc,\{\mid \lambda\mid +1-\lambda_s,\dotsc, \mid \lambda \mid\}$.

\begin{example} For $\lambda=(2,1)$, we illustrate the partition diagrams appearing in $\mathcal{Y}_\lambda$
	\begin{center}
		\begin{tikzpicture}[scale=0.7,
		mycirc/.style={circle,fill=black, minimum size=0.1mm, inner sep = 1.5pt}]
		\node[mycirc,label=above:{$1$}] (n1) at (0,2) {};
		\node[mycirc,label=above:{$2$}] (n2) at (2,2) {};
		\node[mycirc,label=above:{$3$}] (n3) at (4,2) {};
		\node[mycirc,label=below:{$1'$}] (n1') at (0,1) {};
		\node[mycirc,label=below:{$2'$}] (n2') at (2,1) {};
		\node[mycirc,label=below:{$3'$}] (n3') at (4,1) {};
		\node (n4) at (4.6,1.5) {$, \quad$};
		\draw (n1)--(n1');
		\draw (n2)--(n2');
		\draw (n3)--(n3');
	\end{tikzpicture} 
\begin{tikzpicture}[scale=0.7,
	mycirc/.style={circle,fill=black, minimum size=0.1mm, inner sep = 1.5pt}]
	\node[mycirc,label=above:{$1$}] (n1) at (0,2) {};
	\node[mycirc,label=above:{$2$}] (n2) at (2,2) {};
	\node[mycirc,label=above:{$3$}] (n3) at (4,2) {};
	\node[mycirc,label=below:{$1'$}] (n1') at (0,1) {};
	\node[mycirc,label=below:{$2'$}] (n2') at (2,1) {};
	\node[mycirc,label=below:{$3'$}] (n3') at (4,1) {};
	\node (n4) at (4.6,1.5) {$,\quad$};
	\draw (n1)--(n1');
	\draw (n2)..controls(3,1.5)..(n3);
	\draw (n2')..controls(3,1.5)..(n3');
	\draw (n3)--(n3');
\end{tikzpicture}
\end{center}

\begin{center}
\begin{tikzpicture}[scale=0.7,
	mycirc/.style={circle,fill=black, minimum size=0.1mm, inner sep = 1.5pt}]
	\node[mycirc,label=above:{$1$}] (n1) at (0,2) {};
	\node[mycirc,label=above:{$2$}] (n2) at (2,2) {};
	\node[mycirc,label=above:{$3$}] (n3) at (4,2) {};
	\node[mycirc,label=below:{$1'$}] (n1') at (0,1) {};
	\node[mycirc,label=below:{$2'$}] (n2') at (2,1) {};
	\node[mycirc,label=below:{$3'$}] (n3') at (4,1) {};
	\node (n4) at (4.2,1.5) {$,$};
	\draw (n1)--(n2');
	\draw (n1')--(n2);
		\draw (n3)--(n3');
\end{tikzpicture}
\begin{tikzpicture}[scale=0.7,
	mycirc/.style={circle,fill=black, minimum size=0.1mm, inner sep = 1.5pt}]
	\node[mycirc,label=above:{$1$}] (n1) at (0,2) {};
	\node[mycirc,label=above:{$2$}] (n2) at (2,2) {};
	\node[mycirc,label=above:{$3$}] (n3) at (4,2) {};
	\node[mycirc,label=below:{$1'$}] (n1') at (0,1) {};
	\node[mycirc,label=below:{$2'$}] (n2') at (2,1) {};
	\node[mycirc,label=below:{$3'$}] (n3') at (4,1) {};
	\node (n4) at (4.4,1.5) {$,\quad$};
	\draw (n1)--(n1');
	\draw (n1)..controls(1,1.5)..(n2);
	\draw (n1')..controls(1,1.5)..(n2');
	\draw (n2')..controls(3,1.5)..(n3');
	\draw (n2)..controls(3,1.5)..(n3);
\end{tikzpicture}
\begin{tikzpicture}[scale=0.7,
mycirc/.style={circle,fill=black, minimum size=0.1mm, inner sep = 1.5pt}]
\node[mycirc,label=above:{$1$}] (n1) at (0,2) {};
\node[mycirc,label=above:{$2$}] (n2) at (2,2) {};
\node[mycirc,label=above:{$3$}] (n3) at (4,2) {};
\node[mycirc,label=below:{$1'$}] (n1') at (0,1) {};
\node[mycirc,label=below:{$2'$}] (n2') at (2,1) {};
\node[mycirc,label=below:{$3'$}] (n3') at (4,1) {};
\node (n4) at (4.4,1.5) {$,\quad$};
\draw (n1)..controls(1,1.5)..(n2);
\draw (n1')..controls(1,1.5)..(n2');
\draw  (n1)--(n1'); 
\draw (n3)--(n3');
\end{tikzpicture}
\end{center}

\begin{center}
\begin{tikzpicture}[scale=0.7,
	mycirc/.style={circle,fill=black, minimum size=0.1mm, inner sep = 1.5pt}]
	\node[mycirc,label=above:{$1$}] (n1) at (0,2) {};
	\node[mycirc,label=above:{$2$}] (n2) at (2,2) {};
	\node[mycirc,label=above:{$3$}] (n3) at (4,2) {};
	\node[mycirc,label=below:{$1'$}] (n1') at (0,1) {};
	\node[mycirc,label=below:{$2'$}] (n2') at (2,1) {};
	\node[mycirc,label=below:{$3'$}] (n3') at (4,1) {};
	\node (n4) at (4.4,1.5) {$,\quad$};
	\draw (n2)..controls(3,1.5)..(n3);
	\draw (n1')..controls(3,1.5)..(n3');
	\draw  (n1)--(n2'); 
	\draw (n3)--(n3');
\end{tikzpicture}
\begin{tikzpicture}[scale=0.7,
	mycirc/.style={circle,fill=black, minimum size=0.1mm, inner sep = 1.5pt}]
	\node[mycirc,label=above:{$1$}] (n1) at (0,2) {};
	\node[mycirc,label=above:{$2$}] (n2) at (2,2) {};
	\node[mycirc,label=above:{$3$}] (n3) at (4,2) {};
	\node[mycirc,label=below:{$1'$}] (n1') at (0,1) {};
	\node[mycirc,label=below:{$2'$}] (n2') at (2,1) {};
	\node[mycirc,label=below:{$3'$}] (n3') at (4,1) {};
	\node (n4) at (4.4,1.5) {$,\quad$};
	\draw (n1)..controls(3,1.5)..(n3);
	\draw (n1')..controls(3,1.5)..(n3');
	\draw  (n2)--(n2'); 
	\draw (n3)--(n3');
\end{tikzpicture}
\begin{tikzpicture}[scale=0.7,
	mycirc/.style={circle,fill=black, minimum size=0.1mm, inner sep = 1.5pt}]
	\node[mycirc,label=above:{$1$}] (n1) at (0,2) {};
	\node[mycirc,label=above:{$2$}] (n2) at (2,2) {};
	\node[mycirc,label=above:{$3$}] (n3) at (4,2) {};
	\node[mycirc,label=below:{$1'$}] (n1') at (0,1) {};
	\node[mycirc,label=below:{$2'$}] (n2') at (2,1) {};
	\node[mycirc,label=below:{$3'$}] (n3') at (4,1) {};
	\node (n4) at (4.2,1.5) {$.$};
	\draw (n1)..controls(3,1.5)..(n3);
	\draw (n2')..controls(3,1.5)..(n3');
	\draw  (n1')--(n2); 
	\draw (n3)--(n3');
\end{tikzpicture}
	\end{center}
	\end{example}

\begin{lemma} \label{lm:imageg}
	Let $e=\sum_{d\in \mathcal{Y}_{\lambda}} \frac{1}{\alpha_d}x_d$. 
	Then $e$ is an idempotent in $\Pa_{|\lambda|}(\xi)$ and 
	$\MP_{\lambda}(\xi)\cong e\Pa_{|\lambda|}(\xi)e$.
\end{lemma}
\begin{proof}
	The idea of the proof is exactly same as the proof of Lemma \ref{lm:idem}. One has to check that  $\tilde{\eta}_\lambda(id)=e$, where $\tilde{\eta}_\lambda$ is the embedding defined in Theorem~\ref{thm:geneem} and  $id=\sum_{[\Gamma]\in\mathcal{U}_\lambda}[\Gamma]$ is the identity element of $\MP_\lambda(\xi)$ (see the paragraph above Theorem \ref{thm:multi_rule}). This will give us that $e$ is an idempotent and that the image of $\tilde{\eta}_{\lambda}$ is contained in $e\Pa_{|\lambda|}(\xi)e$. Now the proof of $e\Pa_{|\lambda|}(\xi)e$ is the full image proceeds as in the proof of Lemma \ref{lm:idem}.
\end{proof}  

Under the embedding in Theorem \ref{thm:geneem}, the anti-involution $\mathfrak{i}$ on $\Pa_{|\lambda|}(\xi)$ translates to the anti-involution $\tilde{\mathfrak{i}}$ on $\MP_{\lambda}(\xi)$. For $[\Gamma]\in\tilde{\mathcal{B}}_\lambda$, $\tilde{\mathfrak{i}}([\Gamma])$	is obtained by reflecting $\Gamma$ along the horizontal axis. As a consequence of Lemma \ref{lm:imageg}, we obtain the generic semisimplicity and the cellularity in the following theorem.
	
\begin{theorem}
The algebra $\MP_{\lambda}(\xi)$ is semisimple over $F[\xi]$.  Furthermore, for $v\in F$, $\MP_{\lambda}(v)$ over $F$ is semisimple when $v$ is not an integer or when $v$ is an integer such that $v\notin\{0,1,\ldots, 2|\lambda|-2\}$. Moreover,  $\MP_{\lambda}(v)$ over $F$ is cellular with respect to the anti-involution $\tilde{i}$.
\end{theorem}

	\section*{Acknowledgements}
	The authors thank Amritanshu Prasad for his consistence guidance and fruitful advice. The authors also thank Nate Harman and Mike Zabrocki for their valuable suggestions on this manuscript. We thank the referee for pointing out a few crucial errors in the manuscript and giving us the opportunity to address them. SS was supported by a national postdoctoral fellowship (PDF/2017/000861) of the Department of Science \& Technology, India.

	\bibliographystyle{abbrvurl}

\begin{thebibliography}{10}
	
	\bibitem{MR1194310}
	K.~Akin.
	\newblock On complexes relating the {J}acobi-{T}rudi identity with the
	{B}ernstein-{G}elfand-{G}elfand resolution. {II}.
	\newblock {\em J. Algebra}, 152(2):417--426, 1992.
	\newblock URL: \url{https://doi.org/10.1016/0021-8693(92)90039-O}.
	
	\bibitem{BH}
	G.~Benkart and T.~Halverson.
	\newblock Partition algebras and the invariant theory of the symmetric group.
	\newblock In {\em Recent trends in algebraic combinatorics}, volume~16 of {\em
		Assoc. Women Math. Ser.}, pages 1--41. Springer, Cham, 2019.
	\newblock \href {https://doi.org/10.1007/978-3-030-05141-9_1}
	{\path{doi:10.1007/978-3-030-05141-9_1}}.
	
	\bibitem{Bloss1}
	M.~Bloss.
	\newblock G-colored partition algebras as centralizer algebras of wreath
	products.
	\newblock {\em J. Algebra}, 265(2):690 -- 710, 2003.
	\newblock URL:
	\url{http://www.sciencedirect.com/science/article/pii/S0021869303001327}.
	
	\bibitem{MR1503378}
	R.~Brauer.
	\newblock On algebras which are connected with the semisimple continuous
	groups.
	\newblock {\em Ann. of Math. (2)}, 38(4):857--872, 1937.
	\newblock URL: \url{https://doi.org/10.2307/1968843}.
	
	\bibitem{OZS}
	L.~Colmenarejo, R.~Orellana, F.~Saliola, A.~Schilling, and M.~Zabrocki.
	\newblock An insertion algorithm on multiset partitions with applications to
	diagram algebras.
	\newblock {\em J. Algebra}, 557:97--128, 2020.
	\newblock URL: \url{https://doi.org/10.1016/j.jalgebra.2020.04.010}.
	
	\bibitem{GL}
	J.~J. Graham and G.~I. Lehrer.
	\newblock Cellular algebras.
	\newblock {\em Invent. Math.}, 123(1):1--34, 1996.
	\newblock URL: \url{https://doi.org/10.1007/BF01232365}.
	
	\bibitem{HR}
	T.~Halverson and A.~Ram.
	\newblock Partition algebras.
	\newblock {\em European J. Combin.}, 26(6):869--921, 2005.
	\newblock URL: \url{http://dx.doi.org/10.1016/j.ejc.2004.06.005}.
	
	\bibitem{Nate}
	N.~Harman.
	\newblock Representations of monomial matrices and restriction from {$GL_n$} to
	{$S_n$}.
	\newblock \href{https://arxiv.org/abs/1804.04702}{arXiv:1804.04702}, 2018.
	
	\bibitem{Jones}
	V.~F.~R. Jones.
	\newblock The {P}otts model and the symmetric group.
	\newblock In {\em Subfactors ({K}yuzeso, 1993)}, pages 259--267. World Sci.
	Publ., River Edge, NJ, 1994.
	
	\bibitem{KX}
	S.~K\"{o}nig and C.~Xi.
	\newblock On the structure of cellular algebras.
	\newblock In {\em Algebras and modules, {II} ({G}eiranger, 1996)}, volume~24 of
	{\em CMS Conf. Proc.}, pages 365--386. Amer. Math. Soc., Providence, RI,
	1998.
	
	\bibitem{littlewood_1958}
	D.~E. Littlewood.
	\newblock Products and plethysms of characters with orthogonal, symplectic and
	symmetric groups.
	\newblock {\em Canad. J. Math.}, 10:17–32, 1958.
	\newblock \href {https://doi.org/10.4153/CJM-1958-002-7}
	{\path{doi:10.4153/CJM-1958-002-7}}.
	
	\bibitem{MR3443860}
	I.~G. Macdonald.
	\newblock {\em Symmetric functions and {H}all polynomials}.
	\newblock Oxford Classic Texts in the Physical Sciences. The Clarendon Press,
	Oxford University Press, New York, second edition, 2015.
	
	\bibitem{Martin}
	P.~Martin.
	\newblock {\em Potts models and related problems in statistical mechanics},
	volume~5 of {\em Series on Advances in Statistical Mechanics}.
	\newblock World Scientific Publishing Co., Inc., Teaneck, NJ, 1991.
	\newblock URL: \url{https://doi.org/10.1142/0983}.
	
	\bibitem{MR1265453}
	P.~Martin.
	\newblock Temperley-{L}ieb algebras for nonplanar statistical mechanics---the
	partition algebra construction.
	\newblock {\em J. Knot Theory Ramifications}, 3(1):51--82, 1994.
	\newblock URL: \url{https://doi.org/10.1142/S0218216594000071}.
	
	\bibitem{martin2008diagram}
	P.~Martin.
	\newblock On diagram categories, representation theory and.
	\newblock In {\em Noncommutative Rings, Group Rings, Diagram Algebras, and
		Their Applications: International Conference, December 18-22, 2006,
		University of Madras, Chennai, India}, volume 456, page~99. American
	Mathematical Soc., 2008.
	
	\bibitem{MS94}
	P.~Martin and H.~Saleur.
	\newblock Algebras in higher-dimensional statistical mechanics---the
	exceptional partition (mean field) algebras.
	\newblock {\em Lett. Math. Phys.}, 30(3):179--185, 1994.
	\newblock URL: \url{https://doi.org/10.1007/BF00805850}.
	
	\bibitem{MS}
	A.~Mishra and S.~Srivastava.
	\newblock On representation theory of partition algebras for complex reflection
	groups.
	\newblock {\em Algebr. Comb.}, 3(2):389--432, 2020.
	\newblock URL: \url{alco.centre-mersenne.org/item/ALCO_2020__3_2_389_0/}.
	
	\bibitem{OZ}
	R.~Orellana and M.~Zabrocki.
	\newblock Symmetric group characters as symmetric functions.
	\newblock {\em Adv. Math.}, 390:107943, 2021.
	\newblock URL: \url{https://doi.org/10.1016/j.aim.2021.107943}.
	
	\bibitem{OZM}
	R.~{Orellana} and M.~{Zabrocki}.
	\newblock {Howe duality of the symmetric group and a multiset partition
		algebra}.
	\newblock {\em To appear in Comm. Algebra}, 2022.
	
	\bibitem{rtcv}
	A.~Prasad.
	\newblock {\em Representation Theory: A Combinatorial Viewpoint}.
	\newblock Number 147 in Cambridge Studies in Advanced Mathematics. Cambridge
	University Press, Delhi, 2015.
	
	\bibitem{Schur1927}
	I.~Schur.
	\newblock {\"U}ber die rationalen {D}arstellungen der allgemeinen linearen
	{G}ruppe.
	\newblock {\em S'ber Akad. Wiss. Berlin}, pages 100--124, 1927.
	
	\bibitem{MR1676282}
	R.~P. Stanley.
	\newblock {\em Enumerative combinatorics. {V}ol. 2}, volume~62 of {\em
		Cambridge Studies in Advanced Mathematics}.
	\newblock Cambridge University Press, Cambridge, 1999.
	\newblock URL: \url{https://doi.org/10.1017/CBO9780511609589}.
	
	\bibitem{Weyl}
	H.~Weyl.
	\newblock {\em The Classical Groups: Their Invariants and Representations}.
	\newblock Princeton University Press, Princeton, NJ, 1939.
	
	\bibitem{Xi}
	C.~Xi.
	\newblock Partition algebras are cellular.
	\newblock {\em Compositio Math.}, 119(1):99--109, 1999.
	\newblock URL: \url{https://doi.org/10.1023/A:1001776125173}.
	
	\bibitem{Young}
	A.~Young.
	\newblock On {Q}uantitative {S}ubstitutional {A}nalysis ({S}econd {P}aper).
	\newblock {\em Proc. Lond. Math. Soc.}, 34:361--397, 1902.
	\newblock URL: \url{https://doi.org/10.1112/plms/s1-34.1.361}.
	
\end{thebibliography}

\end{document}